\definecolor{Green}{rgb}{0.0,0.45,0.0}
\def\NewTheorem#1#2{%
	\newaliascnt{#1}{thmm}
	\newtheorem{#1}[#1]{#2}
	\aliascntresetthe{#1}
	\expandafter\def\csname #1autorefname\endcsname{#2}
}
\numberwithin{equation}{section}
\theoremstyle{definition}
\theoremstyle{remark}
\theoremstyle{definition}
\newcommand{\sign}{\text{sign}}
\renewcommand{\P}{ \mathbb P }
\newcommand{\EXP}{\mathbb{E}}
\newcommand{\IND}[1] {{ \mathds{1}_{ #1 }} }
\newcommand{\1}{\mathbbm{1}}
\newcommand{\R}{\mathbb{R}}
\newcommand{\C}{\mathbb{C}}
\newcommand{\X}{\mathbf{X}}
\newcommand{\barX}{\bar{\mathbf{X}}}
\newcommand{\T}{\mathbb{T}}
\newcommand{\Y}{\mathbf{Y}}
\newcommand{\bX}{{\bf{X}}}
\newcommand{\Rd}{\mathbb{R}^d}
\newcommand{\mirror}[1]{%
  \mathord{%
    \mathchoice
      {\reflectbox{$\displaystyle #1$}}%
      {\reflectbox{$\textstyle #1$}}%
      {\reflectbox{$\scriptstyle #1$}}%
      {\reflectbox{$\scriptscriptstyle #1$}}%
  }%
}
\renewcommand{\root}{{\mirror{\emptyset}}}
\newcommand{\Vo}{\mathring{V}}
\newcommand{\bY}{\bold{Y}}
\newcommand{\bZ}{\bold{Z}}
\title{Navier-Stokes Equations with Fractional Dissipation and Associated Doubly Stochastic Yule Cascades}
\author{Radu Dascaliuc\thanks{Department of Mathematics,  Oregon State University, 
Corvallis, OR, 97331. {dascalir@math.oregonstate.edu}}
\and Tuan N.\ Pham\thanks{Faculty of Math and Computing,  Brigham Young University-Hawaii,  
Laie, HI 96762. 
{tpham@byuh.edu}}
\and Enrique Thomann\thanks{Department of Mathematics,  Oregon State University, 
Corvallis, OR, 97331.
{thomann@math.oregonstate.edu}}
\and
Edward C.\ Waymire\thanks{Department of Mathematics,  Oregon State University, Corvallis, OR, 97331. 
{waymire@math.oregonstate.edu}}
}
\begin{document}
\maketitle
%\centerline{\textbf{\large }}

\begin{abstract} 
We introduce a self-similar doubly stochastic Yule (DSY) cascade associated with the deterministic Navier-Stokes equations (NSE) in $\R^d$ with fractional dissipation $(-\Delta)^\gamma$. Interestingly, such a structure is well-defined only in the scaling-supercritical regime $\gamma\in(\frac{1}{2},\frac{d+2}{4})$. We then characterize parametric regions of $(d,\gamma)$ that correspond to the stochastically explosive, non-explosive, hyperexplosive, non-hyperexplosive behaviors of the DSY cascade. Stochastic solution processes are constructed recursively, and their expectations yield solutions to the fractional NSE whenever these expectations exist. Explosion and geometric properties of the DSY cascade are then exploited to establish non-uniqueness and finite-time blowup results for a scalar partial differential equation associated with the fractional NSE using a majorization principle for stochastic solution processes. In the special case $d=2$, we derive a closed form for the solution process and prove the finite-time loss of integrability of the solution process for sufficiently large initial data. This lack of integrability does not necessarily imply finite-time blowup of solutions to the fractional NSE. Indeed, for vortex-flow initial data, we show that the solution can be continued beyond the time of integrability breakdown by averaging the stochastic solution processes in a way that creates symmetry cancellations.

%Parametric regions are identified in terms of the spatial dimension $d$ and the power $\gamma$ of the Laplacian that separate the hyperexplosive and explosive regimes from the non-explosive regimes for the self-similar doubly stochastic Yule cascades (DSY) naturally associated with the deterministic fractional Navier-Stokes equations (FNSE) in $\R^d$ in the scaling-supercritical setting. Explosion and/or geometric properties of the DSY, are then used to establish non-uniqueness, local existence, and finite-time blow-up results for a scalar partial differential equation associated with the FNSE through a majorization principle for the stochastic solution processes. The solution processes themselves are constructed from the DSY and yield solutions to the FNSE as expected values when they exist. 

\end{abstract}
\tableofcontents
\section{Introduction}

The natural scaling property of the 3D Navier-Stokes equations plays an important role in the unresolved problem of global well-posedness. There is a scaling gap between the available subcritical controls, essentially coming from energy estimates, and critical or supercritical controls which are necessary to prevent finite-time blowup \cites{bradshaw2019algebraic,lemarie2002recent,tao09,cheskidov2022sharp}. 

Another manifestation of this scaling gap arises when considering the 
Navier–Stokes equations on $\R^d$, $d\ge2$, 
with fractional dissipation $\gamma>0$:
\begin{equation}\label{nsedg}\tag*{$\textup{(NSE)}_{d,\gamma}$}u_t+(-\Delta)^\gamma u+u\nabla u+\nabla p=0,\ \text{div}\,u=0,\ 
u(\cdot,0)=u_0,
\end{equation}
where $u=u(x,t)$, $x\in\Rd$, $t\ge 0$. A natural scaling that preserves the system is
\begin{equation}\label{scaling}
u_\lambda(x,t)=\lambda^{2\gamma-1}u(\lambda x,\lambda^{2\gamma}t),\ \ p_\lambda(x,t)=\lambda^{4\gamma-2}p(\lambda x,\lambda^{2\gamma}t),\ \ u_{0\lambda}(x)=\lambda^{2\gamma-1}u_0(\lambda x).\ \ 
\end{equation}
On and above the critical line 
\[
\gamma=\frac{d+2}{4},
\]
there are a number of global well-posedness and regularity results, including the classical 2D Navier–Stokes equation 
\cites{lady,constantin_foias,katz,tao09,nan,baaske,ferreira,colombo}. In contrast, below the critical line, that is, in the supercritical regime $\gamma<(d+2)/4$, the questions of global well-posedness 
and regularity remain largely open, with some evidence suggesting finite-time blowup and non-uniqueness of solutions 
\cite{cordoba,DA_EB_MC2022,de2019infinitely}. This supercritical regime includes the  most prominent 3d case with $\gamma=1$. 
Incidentally, the critical case $\gamma=(d+2)/4$ corresponds precisely to the situation in which the energy is scaling-invariant; see \autoref{Section2} and in particular \eqref{energy}.

While the classic example relating partial differential equations and 
stochastic processes in mathematical physics is that of the heat/diffusion 
equation and Brownian motion, other illustrations 
are known including the KPP and wave equations 
\cite{mckean, chatterjee, bakhtin, edengrowth}, Riccati-type equations \cite{athreya,Aldous1998Diffusion,edengrowth, dascaliuc2023errata,transformation2025}, and incompressible 3D Navier-Stokes equations 
\cite{lejan}. In this paper, we investigate how the interplay between dimension, 
dissipation, and criticality in the fractional Navier–Stokes problem in the Fourier space can be understood through an associated stochastic structure originally introduced by Le Jan and Sznitman \cite{lejan}. 
This structure arises 
directly from the natural scaling of the Fourier-transformed 
fractional Navier–Stokes equations in the mild form. The Fourier transform $\hat{u}$ has the same scaling property as
\[
h(\xi) = \frac{c_{d,\gamma}}{|\xi|^{d+1-2\gamma}}.
\] 
Scaling $\hat{u}$ by $h$ transforms the mild formulation of \ref{nsedg} into an integral equation \eqref{mildnse} of $\chi=c\hat{u}/h$ with an integral kernel $H$ given by 
\eqref{Hkernel}. This kernel can also be obtained by rescaling a self-similar solution to one that is defined on the unit sphere of $\R^d$ (see \cite[Sec.\ III]{chaos} for the case $d=3,\,\gamma=1$). Remarkably, $H$ is a probability kernel yielding a stochastic representation of 
solutions precisely in the supercritical regime $1/2 < \gamma < (d+2)/4$.
In this regime, \eqref{mildnse} can be interpreted 
as a mean flow equation whose stochastic structure is built on two random fields: 

(i) A {\it doubly stochastic Yule (DSY) cascade} $\bY$ indexed by the vertices of a full labeled binary tree. Along each fixed ray emanating from the root, the labels form a Markov process as discussed in \autoref{Section3},  and 

(ii) a {\it stochastic solution process} $\bX=\bX(\xi,t)$ defined by a binary recursion alongside 
the evolution of $\bY$ on a common probability space $(\Omega, \mathcal{F}, \P)$ as discussed in \autoref{Section2}. 

The expected value of $\bX$, if exists,  defines a solution $\chi(\xi,t)=\EXP_{\xi}\X(\xi,t)$ to the mean flow equation \eqref{nFMS}. Such solution presentation provides a probabilistic framework for studying various regularity questions for the mean flow equation, such as nonuniqueness and finite-time blowup. This framework is a generalization of the framework by Le Jan and Sznitman \cite{lejan}, where a coin-tossing mechanism was used to eliminate the possibility of having infinitely many branch splittings in finite time (a phenomenon known as \emph{stochastic explosion}). The probabilistic representation of solutions allowed them to prove global well-posedness of the mild solution of the classical 3D Navier-Stokes equations in the space of pseudo-measures $u\in L^\infty((0,\infty),PM^2)$ for small initial data. Here, $PM^a$ denotes the space of pseudo-measures defined in \eqref{pma}. It was pointed out in \cite{alphariccati, dascaliuc2019jan} that even without coin-tossing, a \emph{minimal solution process} is well-defined for all $t>0$, which in turn defines a solution $\chi(\xi,t)$ to the mean flow equation up to the first time of integrability breakdown:
\[t_c(\chi_0) = \sup\{t\ge 0: \mathbb{E}_\xi|{\mathbf{X}}(\xi,t)|<\infty\text{~~a.e.~~}\xi\in\R^d\}.\]
This is referred to as the \emph{critical time of integrability} for initial data $\chi_0=c\hat{u}_0/h$. The removal of coin-tossing highlights the problem of stochastic explosion on random trees, as analyzed in \cite{chaos, part1_2021, part2_2021, alphariccati}, and opens up the possibility of using stochastic explosion as a means to establish non-uniqueness and finite-time blowup of solutions \cites{alphariccati,transformation2025,dascaliuc2019jan}. 

The probabilistic representation of solutions naturally defines a global solution to \ref{nsedg} in the critical setting 
$u \in L^{\infty}([0,T), PM^{d+1 - 2\gamma})$ with small initial data (\autoref{pmsol}). This is an extension of the global well-posedness result obtained by \cite{lejan} for $d=3,\,\gamma=1$ (see also \cite[Sec.\ 24.1]{lemarie2002recent} for an analytic approach). More recently, 
\cite{ferreira} proved a number of local and global regularity results for mild 
solutions to \ref{nsedg} with initial data $u_0\in PM^{a,b}:= PM^a+PM^b$ 
and $\gamma\in(\frac{1}{2},\frac{d+2}{4})$.
For the fractional Navier-Stokes equations on the 3D torus, interested readers can find various regularity results depending on the range of the dissipation power $\gamma$ in \cite{BoutrosGibbon}. 

%\textcolor{blue}{
The equations of interest in this paper are the the fractional Navier-Stokes equations in $\Rd$ and its scalar counterpart, the fractional Montgomery–Smith equation given by \eqref{MS}.  These equations share the same DSY cascade $\bY$ but have different solution processes reflecting their different nonlinearities. We show that $\Y$ can exhibit non-explosive, explosive, non-hyperexplosive, or hyperexplosive behavior depending on the relation between the dimension $d$ and $\gamma$ (\autoref{dgdiagram}). Broadly throughout statistical physics,  parametric regimes corresponding to qualitative changes in statistical properties of the evolution of a stochastic structure may be viewed as the \emph{phase transition} phenomena underlying the behavior of the expected mean flow \cite{dean2005phase, edengrowth}. This is indeed the case for the scalar \eqref{MS}, where the explosion regime leads to non-uniqueness of solutions (\autoref{nonuniquenessMS}). Therefore, as the DSY cascade transitions from being non-explosive to explosive, the uniqueness of solutions to the fractional Montgomery-Smith equation transitions to non-uniqueness.
Furthermore, we show that the solution of \eqref{MS} blows up in finite time for sufficiently large compactly supported initial data, thus extending a prior result \cite[Prop.\ 4.14]{dascaliuc2019jan} to the entire supercritical range $d\ge 1$, $\gamma\in(\frac{1}{2},\frac{d+2}{4})$ (\autoref{BlowupMSdgamma}). We also show a sharp result about the blowup phenomenon in the critical space $PM^{d+1-2\gamma}$. Namely, a global solution exists as long as the initial data is in the closed ball of radius $\kappa=\kappa(d,\gamma)>0$ in $PM^{d+1-2\gamma}$. But in any neighborhood of the ball, one can find an initial data such that the corresponding solution blows up instantly. In other words, there are no minimal blowup initial data for \eqref{MS} in $PM^{d+1-2\gamma}$ (\autoref{mspmsol}).
%}

%In the context of fractional Navier-Stokes equations, stochastic 
%explosion or hyperexplosion alone does not qualitatively strengthen existing existence and uniqueness results for mild solutions. In fact, in \cite[Prop.\ 4.14]{dascaliuc2019jan}, where the case $d\ge 3,\,\gamma=1$ is considered, the authors showed that the solution blows up in finite time for sufficiently large compactly supported initial data. We extend this result to the entire supercritical range $d\ge 1$, $\gamma\in(\frac{1}{2},\frac{d+2}{4})$ (\autoref{blowup}).

%\textcolor{blue}{
For the fractional Navier-Stokes equations, stochastic 
explosion or hyperexplosion alone does not appear to qualitatively strengthen existing existence and uniqueness results for mild solutions.  In fact, \autoref{blowup} shows that for $d=2$ and any $\gamma\in(1/2,1)$, a range that includes both hyperexplosion and explosion regimes (see \autoref{dgdiagram}), the solution process has a finite critical time of integrability $t_c$. The minimal stochastic solution process represents a solution to \ref{nsedg}  via its expected value only when $t<t_c$. However, this needs not imply finite time blow-up of solutions.
%Nevertheless, \autoref{radial1}, illustrates how symmetries of the stochastic cascade $\Y$ and the solution process $\X$ can be exploited to extend the solution beyond $t_c$ in some cases.
%}

The idea of constructing different solutions to an initial value problem via the stochastic explosion of the associated DSY cascade was introduced in \cite{alphariccati} for the $\alpha$-Riccati equation and in \cite{dascaliuc2019jan} for the 3D classical Navier-Stokes equations. For the former equation, the hyperexplosion of the associated DSY cascade can be used to produce a one-parameter family of solutions, each with a prescribed asymptotic behavior at infinity \cite{transformation2025}. We do not pursue this direction in the present paper. However,  interesting connections of the phase transitions of the DSY cascade $\bY$ in terms of $d$ and $\gamma$ for the well-posedness of \ref{nsedg} are indicated.

Perhaps counterintuitively, the explosive and hyperexplosive regimes for 
$\bY$ occur near the critical line $\gamma=(d+2)/4$ (see \autoref{dgdiagram}), 
where existence and uniqueness results are known (see e.g.\ \cite{tao09,nan,colombo}). By contrast, the non-explosive 
regime lies deeper in the supercritical region, where only partial regularity results are available (see e.g.\ \cite{katz, cordoba}). This highlights the limitation of interpreting 
stochastic explosion of the DSY cascade as a direct metaphor 
for ill-posedness.

In the two-dimensional case ($d=2$), a closed-form expression for the minimal solution 
process $\bX$ (\autoref{Xclosedform}) is obtained due to the special geometric property of the nonlinear term in 2D. Using this form, we are able to construct a compactly supported initial data $\chi_0$ for which $t_c(\chi_0)<\infty$. An important implication is that for any initial data $\chi_0$ that is sufficiently large on an annulus $\{\xi:\,4r<|\xi|<7r\}$ for some $r>0$, the integrability of $\bX$ is lost after a finite time (\autoref{blowup}). The loss of integrability of the solution process does not necessarily imply the finite-time blowup of solutions to the fractional Navier-Stokes equations. Indeed, for vortex-flow initial data $u_0$ (i.e.\ velocity field in which all streamlines are concentric circles), we show that the solution can be continued beyond $t_c$ by averaging the stochastic solution processes in a way that creates symmetry cancellations (\autoref{radial1}). 

While it is classical that two-dimensional vortex-flow initial data lead to global solutions via cancellations in the nonlinearity (see e.g.\ \cite[Example 2.2, p.\ 48]{majda}, \cite[Sec.\ 4.5]{batchelor}, \cite[Chap.\ 13]{saffman}), the result presented here is the first to reveal 
this cancellation directly at the level of the stochastic cascade. This perspective illustrates the possibility of using a “mean evolution” of $\bX$ to bypass the limitations of using the \emph{majorizing principle} to obtain the integrability of the solution process (see \autoref{reduction}), a method that has played a central role in earlier existence proofs \cite{lejan, dascaliuc2019jan}. 

Nevertheless, a key challenge remains: 
There is a discrepancy between the loss of 
integrability of $\bX$ (expressed by the finiteness of $t_c$) and the finite-time blowup of 
the mean flow equation. Resolving this gap for general initial data requires a more general method to continue the solution past $t_c$, perhaps by introducing a new 
notion of averaging for $\bX$ that exploits cancellations due to symmetry in both the solution process $\bX$ and the DSY cascade $\bY$. An illustrative result
along these lines for vortex-flow initial data is provided in \autoref{Section5}. %An alternative analytic proof of this fact

The paper is organized as follows.  In \autoref{Section2}, we formulate a probabilistic representation of the solution to the incompressible fractional Navier-Stokes equations in the Fourier space (\autoref{819191}). This construction involves a solution process $\bX$ defined by \eqref{eq:822191}, built implicitly on a stochastically labeled binary tree $\bY$. In \autoref{Section3}, we analyze this binary tree, which is a \emph{self-similar} doubly stochastic Yule cascade according to \cite{part2_2021}. We define the events of non-explosion, explosion, non-hyperexplosion, and hyperexplosion, and characterize the corresponding behaviors of $\bY$ in terms of the dimension $d$ and the fractional 
exponent parameters $\gamma$ (\autoref{exp-nonexp} and \autoref{dgdiagram}).  In \autoref{reduction}, the {\it Montgomery-Smith} 
model is introduced as a tool to estimate the critical time $t_c$, which informs the integrability of the solution process, by a comparison method (called \emph{majorizing principle}). A more general majorizing principle can be found in \cite{dascaliuc2019jan}. \autoref{Section5} is dedicated to the special case $d=2$, where the stochastic geometry and symmetries underlying the 
solution process $\bX$ are significantly simplified while all critical phenomena except for non-explosion still exhibit depending on the range of $\gamma$. We show that the integrability of $\bX$ breaks down for any sufficiently large initial data (\autoref{blowup}). A method to extend the solution beyond the critical time $t_c$ for vortex-flow initial data is given in \autoref{radial1}. Interestingly, 
it also shows that the locally defined solution extends to a global solution. 
%\autoref{appendix} contains an alternative analytic proof of this fact. While the result is known for 2D vortex flows with $\gamma=1$, a proof for general $\gamma$ in the Fourier space seems to be difficult to find in literature. We include the appendix for this purpose. {\color{red} This still seems to contradict the fact that a proof has already been given based on the methods being advocated by the paper ? A better justification for the appendix is needed.}

\section{Fractional Navier-Stokes equations as a mean flow equation}\label{Section2}
An apriori bound of \ref{nsedg} is the energy estimate $E[u](t)\le \frac{1}{2}\int_{\Rd}|u_0|^2dx$ for all $t\ge 0$, where 
\begin{equation}\label{energy}
E[u](t)=\frac{1}{2}\int_{\Rd}|u(x,t)|^2dx+\int_0^t\int_{\Rd}|(-\Delta)^{\gamma/2}u(x,s)|^2dxds.
\end{equation}
Under the natural scaling \eqref{scaling}, the energy is scaled as $E[u_\lambda](t)=\lambda^{4\gamma-(d+2)}E[u](\lambda^{2\gamma}t)$. The system (NSE)$_{d,\gamma}$ is called critical if $\gamma=\frac{d+2}{4}$ (i.e.\ if the energy is scale-invariant), supercritical if $\gamma<\frac{d+2}{4}$, and subcritical if $\gamma>\frac{d+2}{4}$. The system is called hypodissipative for $\gamma<1$ and hyperdissipative for $\gamma>1$. Global regularity is known for the critical and subcritical regime $\gamma\ge\frac{d+2}{4}$ (see e.g.\ \cite{katz,tao09,nan}). Local and global regularity holds for mild solutions for  $\gamma\in(\frac{1}{2},\frac{d+2}{4})$ and $u_0\in PM^{a,b}:= PM^a+PM^b$ for certain range of $a$ and $b$ depending on $d$ and $\gamma$, where $PM^a$ is the space of pseudomeasures \cite{ferreira}:
\begin{equation}\label{pma}PM^a=\{v\in\mathcal{S}'(\Rd): \hat{v}\in L^1_{\rm loc}(\Rd),\,\text{esssup}\,|\xi|^a|\hat{v}(\xi)|<\infty\}.\end{equation}
Local and global regularity of mild solutions in the hyperdissipative regime $\gamma\ge 1$ for initial data $u_0$ in supercritical Besov spaces and Triebel-Lizorkin spaces are obtained in \cite{baaske}.

The case of particular interest is $d=3$, in which partial regularity is known for the supercritical hyperdissipative regime $1<\gamma<5/4$ \cite{katz}. If $u_0\in H^\delta$ (Sobolev space) for some arbitrary $\delta>0$, then global regularity holds for any $\gamma\in [\frac{5}{4}-\epsilon,\frac{5}{4}]$ where $\epsilon>0$ depends on $\|u_0\|_{H^\delta}$ \cite{colombo}. Finite-time blowup of solutions to a forced system is obtained in \cite{cordoba} for the hypodissipative regime $0\le\gamma<\frac{2}{9}(22-8\sqrt{7})$.

\ref{nsedg} has an integral form
\begin{equation}\label{nseint}
u(t)=e^{-t(-\Delta)^\gamma}u_0-\int_0^te^{(s-t)(-\Delta)^\gamma}\mathbf{P}\text{div}(u\otimes u)ds
\end{equation}
where $\mathbf{P}$ denotes the Leray projection onto the divergence-free vector fields. Let $\hat{u}$ be the Fourier transform of $u$ in the spatial variable
\begin{equation}\label{c0}\hat{u}(\xi,t)=\mathcal{F}\{u(\cdot,t)\}(\xi)=c_0\int_{\Rd}u(x,t)e^{-ix\cdot\xi}dx,\ \ \ \text{where}\ c_0=(2\pi)^{-d/2}. \end{equation}
Taking the Fourier transform of \eqref{nseint} and using the
convolution identity $\mathcal{F}\{fg\} = c_0 \mathcal{F}\{f\}*\mathcal{F}\{g\}$ 
(see \cite{folland}*{p.\ 284}), one has
\begin{equation}\label{uhateq1}
\hat{u}(\xi,t)=\hat{u}_0(\xi)e^{-|\xi|^{2\gamma}t}+c_0\int_0^t |\xi|
e^{-|\xi|^{2\gamma}s}\int_{\Rd}\hat{u}(\eta,t-s)\otimes_\xi\hat{u}(\xi-\eta,t-s)d\eta ds.
\end{equation}
Here,
\begin{equation}
a\otimes_\xi b=-i(e_\xi\cdot b)\pi_{\xi^\perp}a
\end{equation}
where $e_\xi=\xi/|\xi|$ is the unit vector in $\xi$-direction and $\pi_{\xi^\perp}$ 
is the perpendicular projection onto the hyperplane orthogonal to $\xi$.
As remarked in \cite{dascaliuc2019jan}, the vector product $\otimes_\xi$ in \eqref{uhateq1} can be replaced by a symmetrized vector product $\odot_\xi$ defined by
\begin{equation}\label{odotproduct}
a\odot_\xi b=\frac{1}{2}(a\otimes_\xi b+b\otimes_\xi a)=-\frac{i}{2}((e_\xi\cdot b)\pi_{\xi^\perp}a + (e_\xi\cdot a)\pi_{\xi^\perp} b).\end{equation}
One can rewrite \eqref{uhateq1} as
\begin{equation}\label{uhateq}\tag{FNS}
\hat{u}(\xi,t)=\hat{u}_0(\xi)e^{-|\xi|^{2\gamma}t}+c_0\int_0^t |\xi|e^{-|\xi|^{2\gamma}s}\int_{\Rd}\hat{u}(\eta,t-s)\odot_\xi\hat{u}(\xi-\eta,t-s)d\eta ds.
\end{equation} 
Let 
\begin{equation}\label{constantc}
h(\xi)=c_{d,\gamma}|\xi|^{2\gamma-d-1},~~~{\textrm{where}}~~~{{c}_{d,\gamma }}={{\pi }^{-d/2}}\frac{\Gamma (2\gamma -1)\Gamma {{\left( \frac{d+1-2\gamma }{2} \right)}^{2}}}{\Gamma \left( \frac{d+2-4\gamma }{2} \right)\Gamma {{\left( \frac{2\gamma -1}{2} \right)}^{2}}}.\end{equation}
For $\gamma\in(\frac{1}{2},\frac{d+2}{4})$, one can infer from the Fourier transform of radial functions \cite[p.\ 205]{rudin}
\begin{equation}\label{fourier}
\mathcal{F}\{|x|^{-\alpha}\}=2^{\frac{d}{2}-\alpha}\frac{\Gamma(\frac{d-\alpha}{2})}{\Gamma(\frac{\alpha}{2})}|\xi|^{\alpha-d},\ \ \ \forall\, \alpha\in(0,d)
\end{equation}
that 
\begin{equation}\label{hkernel}|\xi|^{2\gamma-1}h(\xi)=h*h(\xi).\end{equation}
The function $h$ defined by \eqref{constantc} is known as a \emph{standard majorizing kernel} \cite{rabi}. The normalized Fourier transform $\chi(\xi,t)=c_0\frac{\hat{u}(\xi,t)}{h(\xi)}$ satisfies
\begin{equation}\label{mildnse}\tag{nFNS}
\chi(\xi,t)=\chi_0(\xi)e^{-|\xi|^{2\gamma}t}+\int_0^t |\xi|^{2\gamma}e^{-|\xi|^{2\gamma}s}\int_{\Rd}\chi(\eta,t-s)\otimes_\xi\chi(\xi-\eta,t-s)H(\eta|\xi)d\eta ds
\end{equation}
where $\chi_0(\xi)=\chi(\xi,0)$ and the integral kernel $H$ is given by
\begin{equation}
\label{Hkernel}
H(\eta|\xi)=\frac{h(\eta)h(\xi-\eta)}{|\xi|^{2\gamma-1}h(\xi)}.
\end{equation}
The divergence-free constraint on the initial condition $u_0$ becomes an orthogonality constraint on $\chi_0$:
\begin{equation}\label{divfree}
\xi\cdot\chi_0(\xi)=0\ \ \ \forall\xi\in\R^d\backslash\{0\}.
\end{equation}
At this point, the solution $\chi$ to \eqref{mildnse} can be interpreted probabilistically as the expected value of a ``solution process'' $\mathbf{X}$ defined implicitly by the recursive relation
\begin{equation}\label{eq:822191}\X(\xi,t)=\left\{ \begin{array}{*{35}{l}}
   \chi_0(\xi) & \text{if} & {{Y_\root}}\ge t,  \\
   {\X}^{(1)}(W_1, t-{{Y_\root}}){{\odot}_{\xi }}{\X}^{(2)}(W_2, t-{{Y_\root}}) & \text{if} & {{Y_\root}}< t.  \\
\end{array} \right.\end{equation}
Here, $Y_\root$ is an exponentially distributed random variable with mean $|\xi|^{-2\gamma}$; $W_1\in\mathbb{R}^d$ is a random variable independent of $Y_\root$ with probability density $H(\cdot|\xi)$; $W_2=\xi-W_1$; and $\X^{(1)}$ and $\X^{(2)}$ are, conditionally given $Y_\root$, two independent copies of $\X$. We state the result precisely as follows. 
%More precisely, $\chi(\xi,t)=\EXP_\xi\X(\xi,t)$ satisfies \eqref{mildnse} on the time interval $[0,\tau]$ provided that $\EXP_\xi|\X(\xi,t)|<\infty$ for all $t\in[0,\tau]$.

As usual, we use the Lebesgue measure on $\mathbb{R}^k$ and $\mathbb{C}^k$, $k\in\mathbb{N}$. A subset of a measurable set is said to have full measure if the complement has a zero measure.

\begin{defin}[Solution Process] 
For a given measurable function
$\chi_0:\mathbb{R}^d\to\mathbb{C}^d$, a stochastic process  process
$\mathbf{X}$ satisfying \eqref{eq:822191} in distribution,  is referred to as
a \emph{solution process}. 
If $\mathbb{E}_\xi|{\mathbf{X}}|<\infty$ for all $(\xi,t)\in Q$, 
where $Q\subset \mathbb{R}^d\times[0,\tau)$ is a subset with full measure, then $\mathbf{X}$ is referred to as an
\emph{integrable solution process} on $\mathbb{R}^d\times[0,\tau)$. 
\end{defin}

Applying the Definition \eqref{eq:822191} repeatedly with $\bX$ being replaced by $\bX^{(1)}$ and then by $\bX^{(2)}$, one arrives at a 
{\em doubly stochastic Yule cascade} 
$\bY=\{Y_v\}_{v\in\T}$ indexed by a binary tree 
$\T=\{\root\}\bigcup\left(\cup_{n=1}^\infty\{1,2\}^n\right)$ rooted at $\root$, 
where $Y_\root$ is an exponential random variable with intensity $|\xi|^{2\gamma}$, $Y_1$ 
and $Y_2$ are, conditionally on $Y_\root$, the exponential random variables with intensities $|W_1|^{2\gamma}$ and 
$|W_2|^{2\gamma}$, and so on down the tree $\T$. A more detailed description is in \autoref{dsy} and \autoref{Section3}. As defined below, the \emph{explosion time} which is stochastic and the \emph{critical time of integrability} which is deterministic play an essential role in the 
construction of a solution process.
\begin{defin}\label{expltimeintegtime}
For any vertex $v=(v_1,v_2,...,v_n)\in\T$, denote by $|v|=n$ the genealogical height of $v$, and $v|j=(v_1,...,v_j)$ the truncation up to the $j$'th generation with the convention that $v|0=\root$.
\begin{enumerate}[(i)]
\item The (stochastic) \emph{explosion time} is defined by
\begin{equation}\label{explosiontime}
S=S_{\xi} 
=\underset{n\ge 0 }{\mathop{\sup }}\,\underset{|v|=n}{\mathop{\min }}\,\sum\limits_{j=0}^{n} Y_{v|j}.\end{equation}
\item The (stochastic) {\em hyperexplosion time} is defined by
\begin{equation}\label{hyperexplosiontime}
L=L_{\xi} 
=\underset{n\ge 0 }{\mathop{\sup }}\,\underset{|v|=n}{\mathop{\max }}\,\sum\limits_{j=0}^{n} Y_{v|j}.\end{equation}
\item The (deterministic) \emph{critical time of integrability} is defined by
\begin{equation}\label{tcritical}
t_c=t_c(\chi_0) = \sup\{t\ge 0: \mathbb{E}_\xi[|{\mathbf{X}}(\xi,t)|\1_{[S>t]}]<\infty\text{~~a.e.~~}\xi\in\R^d\}.
\end{equation}
\end{enumerate}
\end{defin}

The existence of a solution process is assured by the construction
of the {\it minimal  solution process} as follows.

\begin{prop} For each measurable
 function $\chi_0:\mathbb{R}^d\to\mathbb{C}^d$, a
 solution process exists. 
\end{prop}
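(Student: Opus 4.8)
The plan is to construct the \emph{minimal} solution process by hand on the probability space carrying the DSY cascade, exploiting the explosion structure so that on $\{S_\xi>t\}$ the recursion \eqref{eq:822191} closes up after finitely many steps and no fixed‑point/contraction argument is needed. Concretely, fix $\xi$ and realize the labelled binary tree $\T$ together with the cascade labels $\{(Y_v,W_v)\}_{v\in\T}$: put $W_\theta=\xi$; given $W_v$, the two children of $v$ receive $W_{v1}$ with density $H(\,\cdot\mid W_v)$ — a genuine probability density because $\int H(\eta\mid\xi)\,d\eta=(h*h)(\xi)/(|\xi|^{2\gamma-1}h(\xi))=1$ by \eqref{hkernel} and \eqref{Hkernel} — and $W_{v2}=W_v-W_{v1}$; and $Y_v$ is exponential of mean $|W_v|^{-2\gamma}$, with the branching (conditional‑independence) structure of the cascade (the subtree rooted at any descendant label is then again such a cascade). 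Now run the recursion \eqref{eq:822191} from the root with time budget $t$: at a visited vertex $v$ with remaining budget $\tau$, stop and record $\chi_0(W_v)$ if $Y_v\ge\tau$, else pass budget $\tau-Y_v$ to each child. Along any ray this halts at the first $k$ with $\sum_{j=0}^{k}Y_{v|j}\ge t$, so — since by \eqref{explosiontime} $S_\xi$ is the nondecreasing limit of $\min_{|v|=n}\sum_{j=0}^n Y_{v|j}$ — on $\{S_\xi>t\}$ there is a random $N$ with $\min_{|v|=N}\sum_{j=0}^N Y_{v|j}>t$, so every ray has halted by generation $N$, the recursion tree is finite, and the recursion produces a well‑defined $\C^d$‑value; call it $\mathbf X(\xi,t)$. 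On $\{S_\xi\le t\}$ set $\mathbf X(\xi,t):=0$. For each $(\xi,t)$ this is an honest random variable (a countable sum over $N$ of $\1_{[\text{recursion resolves at depth }N]}$ times a Borel function of finitely many labels), jointly measurable in $(\xi,t,\omega)$ by the same decomposition, and over different $\xi$ the cascades may be taken independent since \eqref{eq:822191} constrains only each marginal.

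The core step is to verify that this $\mathbf X$ satisfies \eqref{eq:822191} — in fact pathwise on the space just built, hence \emph{a fortiori} in distribution. The mechanism is the one‑step recursion for the explosion time, $S_\xi=Y_\theta+\min\{S^{(1)}_{W_1},S^{(2)}_{W_2}\}$, where $S^{(1)},S^{(2)}$ are the explosion times of the two subtrees hanging from $\theta$; this follows from \eqref{explosiontime} by splitting $\{1,2\}^n$ into its two halves and using monotonicity in $n$. Since each $S^{(i)}>0$ a.s., on $\{Y_\theta\ge t\}$ one has $S_\xi>t$ and the recursion halts at the root, so $\mathbf X(\xi,t)=\chi_0(\xi)$, matching \eqref{eq:822191}. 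On $\{Y_\theta<t\}$ the identity gives $\{S_\xi>t\}=\{S^{(1)}>t-Y_\theta\}\cap\{S^{(2)}>t-Y_\theta\}$, on which $\mathbf X(\xi,t)=\mathbf X^{(1)}(W_1,t-Y_\theta)\odot_\xi\mathbf X^{(2)}(W_2,t-Y_\theta)$ by construction, where $\mathbf X^{(i)}$ is the process produced by the same construction on subtree $i$; while on $\{Y_\theta<t\}\cap\{S_\xi\le t\}$ we have $\min\{S^{(1)},S^{(2)}\}\le t-Y_\theta$, so at least one factor $\mathbf X^{(i)}(W_i,t-Y_\theta)$ equals $0$, and bilinearity of $\odot_\xi$ (in particular $a\odot_\xi 0=0\odot_\xi b=0$ from \eqref{odotproduct}) forces the right‑hand side of \eqref{eq:822191} to equal $0=\mathbf X(\xi,t)$. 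Thus the two sides of \eqref{eq:822191} coincide in every case. Finally, by the branching structure of the cascade, $\mathbf X^{(1)},\mathbf X^{(2)}$ are conditionally independent given $Y_\theta$ and each (conditionally on its root label) is a copy of $\mathbf X$, while $W_1$ has density $H(\,\cdot\mid\xi)$, $W_2=\xi-W_1$, and $Y_\theta$ is exponential of mean $|\xi|^{-2\gamma}$ — precisely the data in \eqref{eq:822191}. Hence $\mathbf X$ is a solution process; being supported on $\{S_\xi>t\}$ and determined there by \eqref{eq:822191}, it is the minimal one, which is the process referred to in the subsequent statements.

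I do not expect a genuine obstacle; the one point to get right is structural rather than computational — recognizing that on the explosion event the recursion is \emph{finite}, so that $\mathbf X$ can be written down directly instead of as a fixed point, and then checking that the ad hoc value $0$ assigned off the explosion event is compatible with \eqref{eq:822191}, which is exactly where the algebraic identity $a\odot_\xi 0=0$ is used. The remaining items — the one‑step recursion for $S_\xi$ and the measurability bookkeeping — are routine.
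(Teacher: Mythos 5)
Your proposal is correct and follows essentially the same route as the paper: construct the minimal solution process by backward iteration of \eqref{eq:822191} on the a.s.\ finite recursion tree over the event $[S_\xi>t]$, and set $\mathbf X=0$ on $[S_\xi\le t]$. You additionally spell out the verification (via $S_\xi=Y_\theta+\min\{S^{(1)},S^{(2)}\}$ and $a\odot_\xi 0=0$) that the value $0$ off the non-explosion event is compatible with the recursion, a detail the paper leaves implicit.
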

\begin{proof} 
For $t<S$, the recursion \eqref{eq:822191} terminates in finitely many steps, and $\X(\xi,t)$ is uniquely defined as an $\odot$-product of the values of $\chi_0$ evaluated at the vectors where the recursion terminates. See \autoref{cascadeexample} for an example of a closed form of $\bX(\xi,t)$. For $t\ge S$, define 
$\X(\xi,t) = 0$.
\end{proof}

\begin{cor}
If $S=\infty$ almost surely, then the minimal solution process 
is the unique solution process. 
\end{cor}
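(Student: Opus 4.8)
Write $\X_\ast$ for the minimal solution process constructed in the proof of the preceding proposition. The plan is to show that, when $S=\infty$ almost surely, the recursion \eqref{eq:822191} pins down $\X(\xi,t)$ as one fixed measurable functional of the DSY cascade $\bY$ (together with the split variables $W_v$), so that no solution process can have a law different from that of $\X_\ast$; since $\X_\ast$ is itself a solution process, this is exactly the asserted uniqueness.

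\emph{Step 1 (structure of $\X_\ast$ when $S=\infty$).} For a fixed $t$, the backward iteration \eqref{eq:822191} visits only the vertices $v\in\T$ with $\sum_{j=0}^{|v|-1}Y_{v|j}<t$, and at every visited vertex whose children are not visited it simply outputs $\chi_0$ of that vertex's label; hence the iteration terminates precisely when this visited set is finite. By the definition \eqref{explosiontime} of $S$, the event $\{S=\infty\}$ forces the visited set to be finite for every $t\ge0$. Thus, on the full-measure event $\{S=\infty\}$, unfolding \eqref{eq:822191} down to the finitely many terminal leaves represents
\[
\X_\ast(\xi,t)=F_{\xi,t}\!\left(\bY,\{W_v\}_{v\in\T}\right),
\]
where $F_{\xi,t}$ is a fixed measurable map — a finite iterated $\odot_\xi$-product of the values $\chi_0(W_v)$ over the terminal leaves $v$.

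\emph{Step 2 (every solution process unfolds to the same functional).} Let $\X$ be an arbitrary solution process and iterate \eqref{eq:822191} $n$ times; this is legitimate because, conditionally on the branching at the root, $\X^{(1)}$ and $\X^{(2)}$ are independent solution processes to which \eqref{eq:822191} applies again. The result is a distributional identity
\[
\X(\xi,t)\ \eqdstn\ M_n+R_n,
\]
where $M_n$ collects the branches already terminated within the first $n$ levels (a measurable function of the cascade alone) and $R_n$ collects the not-yet-terminated level-$n$ branches, built from independent copies of $\X$ attached at those vertices. Let $E_n$ be the event that some branch is still alive after $n$ levels. On $E_n^{\,c}$ every branch has terminated, so $R_n=0$ and $M_n=F_{\xi,t}(\bY,\{W_v\}_{v\in\T})\eqdstn\X_\ast(\xi,t)$; moreover $P(E_n)\to0$ as $n\to\infty$, since by Step 1 the iteration terminates almost surely. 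Hence, for every bounded continuous $g$,
\[
\left|\EXP g(M_n+R_n)-\EXP g\big(\X_\ast(\xi,t)\big)\right|\le 2\|g\|_\infty\,P(E_n)\longrightarrow 0,
\]
so $M_n+R_n\convd\X_\ast(\xi,t)$. Since $\X(\xi,t)\eqdstn M_n+R_n$ for every $n$, we conclude $\X(\xi,t)\eqdstn\X_\ast(\xi,t)$; carrying out the same simultaneous unfolding at finitely many points $(\xi_i,t_i)$ then gives equality of all finite-dimensional distributions, i.e.\ $\X\eqdstn\X_\ast$.

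\emph{Main obstacle.} The only thing requiring care is the bookkeeping in Step 2: that iterating the distributional identity \eqref{eq:822191} is legitimate, and that $R_n$ is eliminated purely through $P(E_n)\to0$ — no integrability or size control on $\X$ is needed, so the statement holds for arbitrary (even non-integrable) solution processes. Step 1 and the passage to finite-dimensional laws are immediate from the construction of $\X_\ast$ and the definition of $S$.
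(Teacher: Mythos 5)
Your proof is correct and follows essentially the same route as the paper: non-explosion makes the recursion tree a.s.\ finite, so unfolding \eqref{eq:822191} forces any solution process to coincide (in law) with the fully factored $\odot_\xi$-product defining the minimal one — the paper compresses your $P(E_n)\to 0$ limiting argument into the phrase ``follows by induction.'' One cosmetic point: the decomposition you write as $M_n+R_n$ is really a case split $M_n\1_{E_n^c}+R_n\1_{E_n}$ (the recursion is multiplicative, not additive), but your own use of it makes clear that is what you mean, so the argument stands.
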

\begin{proof}
 On the event  $[S=\infty]$, the  recursion \eqref{eq:822191} is a.s.\ finite for any $t\ge0$. Any solution process $\bX(\xi,t)$ can be factored into an 
 ${{\odot}_{\xi }}$-product which coincides that of the minimal solution process. 
 \end{proof}

In the explosive event $[S<\infty]$, non-minimal solution
processes may exist. This is indeed the case for the $\alpha$-Riccati equation, a scalar mean flow equation with multiplicative nonlinearity \cites{alphariccati, transformation2025}, where non-minimal solution processes $\bX(\xi,t)$ are constructed on the hyperexplosion event $[L<t]$.

% \begin{defin}\label{hyperexpltimeintegtime}
% The (stochastic) {\em hyperexplosion time} is defined by
% \begin{equation}\label{hyperexplosiontime}
% L=L_{\xi} 
% =\underset{n\ge 0 }{\mathop{\sup }}\,\underset{|v|=n}{\mathop{\max }}\,\sum\limits_{j=0}^{n} Y_{v|j}.\end{equation}
% %A DSY  cascade is called {\em hyper-explosive} if $\P(L<\infty)>0$, and the event $[L<\infty]$ is called {\em  hyper-explosion}.
% \end{defin}

% \begin{remark}
% Clearly, hyper-explosive cascades are explosive. Moreover, by the arguments involving the evolution of $\P(L<t)$ and $\P(S>t)$, see e.g. \cite{chaos},  one can show that in the case of DSY described in \autoref{Section3}, both explosion and hyper-explosion are zero-one events for any initial state $\xi$.
% \end{remark}

\begin{prop}\label{819191}
Let $\chi_0:\mathbb{R}^d\to\mathbb{C}^d$ be a measurable function and let $\mathbf{X}$ be a corresponding 
integrable solution process. Then the function $\chi(\xi,t)=\mathbb{E}_\xi{\mathbf{X}}$ is well-defined and measurable on $Q$ with the integrability property 
\begin{equation}\label{intcond}\int_{0}^{t}|\xi |^{2\gamma}{{e}^{-|\xi |^{2\gamma}s}}{\int_{{{\mathbb{R}}^{d}}}{|{{\chi }}(\eta ,s){{\odot}_{\xi }}{{\chi }}(\xi-\eta ,s)|H(\eta |\xi )d\eta ds}<\infty }\ \ \ \forall\,(\xi,t)\in Q.\end{equation}
Moreover, $\chi$ satisfies \eqref{mildnse} everywhere in $Q$. 
\end{prop}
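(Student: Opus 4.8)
The plan is to take the expectation $\mathbb{E}_\xi$ of the defining recursion \eqref{eq:822191} and read off the three assertions --- well-definedness and measurability of $\chi$, the integrability bound \eqref{intcond}, and the identity \eqref{mildnse} --- the work lying entirely in justifying the conditioning and Fubini steps rather than in any new idea. Since $\X$ is an integrable solution process, $\mathbb{E}_\xi|\X|<\infty$ at every $(\xi,t)\in Q$, so $\chi(\xi,t):=\mathbb{E}_\xi\X$ is a well-defined element of $\mathbb{C}^d$ on $Q$. Measurability of $(\xi,t)\mapsto\chi(\xi,t)$ on $Q$ follows from the joint measurability of the process $(\xi,t,\omega)\mapsto\X(\xi,t,\omega)$ --- which the cascade recursion \eqref{eq:822191} makes manifest, since on $\{t<S\}$ the value $\X(\xi,t)$ is a finite $\odot$-composition of $\chi_0$ evaluated at the measurable cascade labels, and on $\{t\geq S\}$ one may take it to be $0$ --- together with Fubini's theorem applied on $Q$, where the integral is absolutely convergent.

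Two elementary facts about $\odot_\xi$ carry the computation. First, it is a contraction: $|a\odot_\xi b|\leq|a|\,|b|$, immediate from \eqref{odotproduct} using $|e_\xi\cdot b|\leq|b|$ and $|\pi_{\xi^\perp}a|\leq|a|$. Second, it is $\mathbb{C}$-bilinear, so that independent integrable $\mathbb{C}^d$-valued random vectors $Z_1,Z_2$ make $Z_1\odot_\xi Z_2$ integrable with $\mathbb{E}[Z_1\odot_\xi Z_2]=(\mathbb{E}Z_1)\odot_\xi(\mathbb{E}Z_2)$. Applying the second fact conditionally on $(Y,W_1)=(s,\eta)$ with $0<s<t$: by the branching structure of the DSY cascade, $\X^{(1)}(\eta,t-s)$ and $\X^{(2)}(\xi-\eta,t-s)$ are then independent and distributed as $\X(\eta,t-s)$ and $\X(\xi-\eta,t-s)$ respectively, so provided $(\eta,t-s)$ and $(\xi-\eta,t-s)$ both lie in $Q$,
\[
\mathbb{E}\big[\X^{(1)}(\eta,t-s)\odot_\xi\X^{(2)}(\xi-\eta,t-s)\,\big|\,Y=s,\,W_1=\eta\big]=\chi(\eta,t-s)\odot_\xi\chi(\xi-\eta,t-s),
\]
and by conditional Jensen its modulus is bounded by $\mathbb{E}\big[|\X^{(1)}\odot_\xi\X^{(2)}|\,\big|\,Y=s,\,W_1=\eta\big]$. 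The one point that needs real care is that $Q$ is merely a full-measure set, so one must check the exceptional $(s,\eta)$ --- those with $(\eta,t-s)\notin Q$ or $(\xi-\eta,t-s)\notin Q$ --- form a null set for the law of $(Y,W_1)$. That law has density $|\xi|^{2\gamma}e^{-|\xi|^{2\gamma}s}H(\eta|\xi)$ on $(0,\infty)\times\mathbb{R}^d$, which is absolutely continuous with respect to Lebesgue measure, and the maps $(s,\eta)\mapsto(\eta,t-s)$ and $(s,\eta)\mapsto(\xi-\eta,t-s)$ preserve Lebesgue measure, so the preimages of the Lebesgue-null set $Q^c$ are indeed null. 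This is what promotes the ``a.e.'' validity to validity everywhere on $Q$.

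With this in hand the remainder is bookkeeping. Split \eqref{eq:822191} over the disjoint events $[Y\geq t]$ and $[Y<t]$ and take $\mathbb{E}_\xi$; the expectation depends only on the distribution of the right side, so the distributional form of \eqref{eq:822191} suffices. The $[Y\geq t]$ contribution is $\chi_0(\xi)\,P(Y\geq t)=\chi_0(\xi)e^{-|\xi|^{2\gamma}t}$, the first (free-evolution) term in \eqref{mildnse}. On $[Y<t]$ one has $\X(\xi,t)=\X^{(1)}(W_1,t-Y)\odot_\xi\X^{(2)}(W_2,t-Y)$, so the tower property, the Jensen bound above, and the explicit density of $(Y,W_1)$ give
\[
\int_0^t|\xi|^{2\gamma}e^{-|\xi|^{2\gamma}s}\int_{\mathbb{R}^d}\big|\chi(\eta,t-s)\odot_\xi\chi(\xi-\eta,t-s)\big|H(\eta|\xi)\,d\eta\,ds\leq\mathbb{E}_\xi\big[|\X(\xi,t)|\1_{[Y<t]}\big]\leq\mathbb{E}_\xi|\X|<\infty,
\]
which is \eqref{intcond}. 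Absolute convergence now licenses Fubini, and the same computation with the moduli removed, using the conditional factorization above, identifies the $[Y<t]$ contribution with $\int_0^t|\xi|^{2\gamma}e^{-|\xi|^{2\gamma}s}\int_{\mathbb{R}^d}\chi(\eta,t-s)\odot_\xi\chi(\xi-\eta,t-s)H(\eta|\xi)\,d\eta\,ds$. Thus $\chi$ satisfies the $\odot_\xi$-form of \eqref{mildnse} at every $(\xi,t)\in Q$; since $\chi(\cdot,t-s)$ appears in both slots and $H(\eta|\xi)=H(\xi-\eta|\xi)$ by \eqref{Hkernel}, the $\eta\mapsto\xi-\eta$ substitution turns this into the $\otimes_\xi$-form, i.e.\ \eqref{mildnse} itself (this is the symmetrization noted after \eqref{odotproduct}). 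The argument hinges on the measure-theoretic alignment of $Q$ with the random evaluation points $(W_1,t-Y)$, $(W_2,t-Y)$ and on establishing \eqref{intcond} before \eqref{mildnse} so that Fubini is legitimate; once those are in place, conditioning, bilinearity, and Fubini do the rest, and I expect no other serious obstacle.
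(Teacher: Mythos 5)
Your proof is correct and follows essentially the same route as the paper's: take expectations of the recursion \eqref{eq:822191}, split over $[Y\ge t]$ and $[Y<t]$, factor the conditional expectation of the $\odot_\xi$-product using conditional independence and bilinearity, and obtain \eqref{intcond} from the Jensen-type bound $|\mathbb{E}[\cdot]|\le\mathbb{E}|\cdot|$ before invoking Fubini. You are in fact somewhat more careful than the paper on two points it leaves implicit --- checking that the exceptional set where $(\eta,t-s)\notin Q$ is null for the law of $(Y,W_1)$, and converting the $\odot_\xi$-form back to the $\otimes_\xi$-form of \eqref{mildnse} via the symmetry of $H$ --- but these are refinements of the same argument, not a different one.
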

\begin{proof}
The proof is very similar to \cite[Prop.\ 2.2]{dascaliuc2019jan}. Since $\X(\xi,t)\in L^1(\mathbb{P}_\xi)$, the function $\chi(\xi,t)=\int_\Omega \mathbf{X}(\xi,t,\omega)\mathbb{P}_\xi(d\omega)$ is measurable on $Q$ (see \cite[Thm 2.39]{folland}). To show \eqref{intcond}, we take the expected value of the magnitude of both side of \eqref{eq:822191}:
\[\infty>{{\mathbb{E}}_{\xi }}|\X|=|{{\chi }_{0}}(\xi )|{{e}^{-|\xi |^{2\gamma}t}}+\int_{0}^{t}{|\xi |^{2\gamma}{{e}^{-|\xi |^{2\gamma}s}}\int_{{{\mathbb{R}}^{d}}}{{{\mathbb{E}}_{\xi }}\left[\left. |\X^{(1)}{{\odot}_{\xi }}\X^{(2)}|\,\right|\,W_1=\eta, T_0=s\right]H(\eta |\xi )d\eta ds}}\]
where $\X^{(k)}=\X^{(k)}(W_k,t-T)$. Using the conditional independence of $\X^{(1)}$ and $\X^{(2)}$ and the linearity of the expectation, we obtain that for a.e.\ $(\eta,s)\in \mathbb{R}^d\times[0,t)$,
\[
\begin{aligned}
&{{{\mathbb{E}}_{\xi }}\left[\left. |\X^{(1)}{{\odot }_{\xi }}\X^{(2)}|\,\right|\,W_1=\eta, T=s\right]}
\ge \left|{{{\mathbb{E}}_{\xi }}\left[\left. \X^{(1)}{{\odot }_{\xi }}\X^{(2)}\,\right|\,W_1=\eta, T=s\right]}\right|
\\
&=
\left|{{\mathbb{E}}_{{\eta}}}{{\X}^{(1)}}(\eta,t-s){\odot}_{\xi} {{\mathbb{E}}_{{\xi-\eta}}}{{\X}^{(2)}}(\xi-\eta,s)\right|
%\cdot {{e}_{\xi }}||{{\pi }_{{{\xi }^{\bot }}}}{{\mathbb{E}}_{{\xi-\eta}}}{{\lwrX}^{(2)}}|
=|\chi(\eta,t-s) {{\odot }_{\xi }}\chi(\xi-\eta,t-s) |,
\end{aligned}\]
and thus (\ref{intcond}) follows. To show $\chi=\mathbb{E}_\xi \X$ satisfies \eqref{mildnse}, we take expectation of both sides of \eqref{eq:822191} (with $(\xi,t)\in Q$):
\[
\chi(\xi,t)={{\mathbb{E}}_{\xi }}[\X(\xi,t)]={{\chi }_{0}}(\xi ){{e}^{-|\xi |^{2\gamma}t}}+\int_{0}^{t}{|\xi |^{2\gamma}{{e}^{-|\xi |^{2\gamma}s}}\int_{{{\mathbb{R}}^{d}}}{{{\mathbb{E}}_{\xi }}\left[\left. \X^{(1)}{{\odot }_{\xi }}\X^{(2)}\,\right|\,W_1=\eta, T=s\right]H(\eta |\xi )d\eta ds}}
\]
Thus, \eqref{mildnse} holds in $Q$. 
\end{proof}
%\begin{remark}
%As remarked in \cite{dascaliuc2019jan}, the vector product $\otimes$ in \eqref{mildnse} can be replaced by a symmetrized vector product $\odot$ defined by
%\[a\odot_\xi b=\frac{1}{2}(a\otimes_\xi b+b\otimes_\xi a).\]
%Then $\chi(\xi,t)=\EXP_\xi\X(\xi,t)$ where $\X$ is a \emph{symmetrized solution process} defined by simply replacing the $\otimes$-product in \eqref{eq:822191} by the $\odot$-product.
%\begin{equation}\label{Xsym}\X(\xi,t)=\left\{ \begin{array}{*{35}{l}}
%   \chi_0(\xi) & \text{if} & {{Y}}\ge t,  \\
%   {\X}^{(1)}(W_1, t-{{Y}}){{\odot}_{\xi }}{\X}^{(2)}(W_2, t-{{Y}}) & \text{if} & {{Y}}< t.  \\
%\end{array} \right.\end{equation}
%\end{remark}
\begin{remark}
While the present paper mainly focuses on integrable minimal solution process, there is a possibility of justifying the "expected value" of a non-integrable solution process, e.g.\ via Cauchy principle value integral, which can still establish a deterministic solution to \eqref{mildnse}.
\end{remark}

Special geometry on the 2-dimensional plane leads to a more specific representation of the $\odot_\xi$-product as follows.

\begin{prop}\label{cased=2}
Let $f,g:\R^2\to\C^2$ be vector fields that satisfy the orthogonality condition: 
\[\xi\cdot f(\xi)=\xi\cdot g(\xi)=0\ \ \ \forall\xi\in\R^2\backslash\{0\}.\]
For $\xi,\eta,\zeta\in\R^2$, $\xi=\eta+\zeta$, one has
\begin{equation}\label{odot}f(\eta)\odot_\xi g(\zeta)=\frac{i}{2}\textup{sign}(\xi\times\eta)(f(\eta)\cdot e_{\eta^\perp})(g(\zeta)\cdot e_{\zeta^\perp})\sin(\theta_{\xi,\eta}-\theta_{\xi,\zeta})e_{\xi^\perp}.\end{equation}
Here, $a\times b=a^\perp\cdot b=a_1b_2-a_2b_1$ if $a=(a_1,a_2)$ and $b=(b_1,b_2)$; $\theta_{u,v}\in[0,\pi]$ is the angle between two vectors $u$ and $v$; $\xi^\perp$ denotes the rotation by $+90^\circ$ of $\xi$, i.e.\ $\xi^\perp=\left[ \begin{matrix}
   0 & -1  \\
   1 & 0  \\
\end{matrix} \right]\xi$.
\end{prop}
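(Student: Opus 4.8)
The plan is to start from the definition of the symmetrized product $\odot_\xi$ in \eqref{odotproduct} and reduce the vector algebra to planar trigonometry. Write $a = f(\eta)$, $b = g(\zeta)$. By the orthogonality hypothesis, $a \perp \eta$ and $b \perp \zeta$, so $a$ is a scalar multiple of $e_{\eta^\perp}$ and $b$ is a scalar multiple of $e_{\zeta^\perp}$, namely $a = (a \cdot e_{\eta^\perp})\,e_{\eta^\perp}$ and $b = (b\cdot e_{\zeta^\perp})\,e_{\zeta^\perp}$. This is the step that makes the 2D case special: in higher dimensions the orthogonal complement of a vector is not one-dimensional, so no such reduction is available.

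Next I would compute the two ingredients of $a \odot_\xi b = -\tfrac{i}{2}\big((e_\xi \cdot b)\pi_{\xi^\perp}a + (e_\xi \cdot a)\pi_{\xi^\perp}b\big)$ in terms of these scalars and the angles. Since everything lives in $\R^2$, the perpendicular projection $\pi_{\xi^\perp}$ onto the line $\xi^\perp$ sends any vector $v$ to $(v \cdot e_{\xi^\perp})\,e_{\xi^\perp}$. Thus $\pi_{\xi^\perp} a = (a\cdot e_{\eta^\perp})(e_{\eta^\perp}\cdot e_{\xi^\perp})\,e_{\xi^\perp}$ and similarly for $b$. I then express the dot products $e_\xi \cdot b$, $e_\eta^\perp \cdot e_{\xi^\perp}$, etc., using the angles $\theta_{\xi,\eta}$ and $\theta_{\xi,\zeta}$. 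The key elementary identities are: for unit vectors, $e_{\eta^\perp}\cdot e_{\xi^\perp} = e_\eta \cdot e_\xi = \cos\theta_{\xi,\eta}$, while $e_\xi \cdot e_{\zeta^\perp} = \pm\sin\theta_{\xi,\zeta}$ with the sign governed by the orientation of the pair $(\xi,\zeta)$, i.e. by $\sign(\xi\times\zeta)$; and since $\xi = \eta + \zeta$ one has $\xi\times\eta = \xi\times\zeta \cdot(\text{same sign})$... more precisely $\xi \times \eta = \zeta\times\eta = -(\eta\times\zeta)$ and $\xi\times\zeta = \eta\times\zeta$, so $\sign(\xi\times\eta) = -\sign(\xi\times\zeta)$ when both are nonzero. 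Assembling, the bracket becomes a scalar times $e_{\xi^\perp}$, and that scalar is
\[
-\tfrac{i}{2}(a\cdot e_{\eta^\perp})(b\cdot e_{\zeta^\perp})\big[(e_\xi\cdot e_{\zeta^\perp})\cos\theta_{\xi,\eta} + (e_\xi\cdot e_{\eta^\perp})\cos\theta_{\xi,\zeta}\big].
\]

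The remaining work is to show the bracketed combination equals $-\sign(\xi\times\eta)\sin(\theta_{\xi,\eta}-\theta_{\xi,\zeta})$ (up to the overall sign bookkeeping that produces the stated $\tfrac{i}{2}\sign(\xi\times\eta)$). Writing $e_\xi\cdot e_{\zeta^\perp} = -\,e_{\xi^\perp}\cdot e_\zeta = -\sign(\xi\times\zeta)\sin\theta_{\xi,\zeta}$ and $e_\xi \cdot e_{\eta^\perp} = -\sign(\xi\times\eta)\sin\theta_{\xi,\eta}$, and using $\sign(\xi\times\zeta) = -\sign(\xi\times\eta)$, the bracket collapses via the sine subtraction formula $\sin\theta_{\xi,\eta}\cos\theta_{\xi,\zeta} - \cos\theta_{\xi,\eta}\sin\theta_{\xi,\zeta} = \sin(\theta_{\xi,\eta}-\theta_{\xi,\zeta})$. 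I would handle the degenerate cases ($\eta$, $\zeta$, or $\xi$ zero, or $\xi$ parallel to $\eta$) separately; in the parallel case both sides vanish ($\sin(\theta_{\xi,\eta}-\theta_{\xi,\zeta}) = 0$ since then $\theta_{\xi,\eta}, \theta_{\xi,\zeta}\in\{0,\pi\}$ are equal, and $\sign(\xi\times\eta)=0$), so the formula holds trivially. The main obstacle — really the only subtle point — is the consistent tracking of orientation signs: relating $\sign(\xi\times\eta)$, $\sign(\xi\times\zeta)$, and the signs hidden in $e_\xi\cdot e_{\eta^\perp}$ and $e_\xi\cdot e_{\zeta^\perp}$, since $\theta_{u,v}\in[0,\pi]$ discards orientation information that the cross products restore. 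A clean way to avoid case-chasing is to parametrize $\eta = |\eta|(\cos\alpha,\sin\alpha)$, $\zeta = |\zeta|(\cos\beta,\sin\beta)$, $\xi = |\xi|(\cos\phi,\sin\phi)$ and verify the identity directly in coordinates, then translate back to the angle notation at the end.
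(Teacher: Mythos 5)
Your argument is correct and follows essentially the same route as the paper's proof: reduce by orthogonality and bilinearity to computing $e_{\eta^\perp}\odot_\xi e_{\zeta^\perp}$, expand the definition of $\odot_\xi$ using that $\pi_{\xi^\perp}v=(v\cdot e_{\xi^\perp})e_{\xi^\perp}$ in the plane, and convert the resulting cosines of perpendicular angles into signed sines. The only (cosmetic) difference is that the paper carries out an explicit two-case split according to which half-plane $\eta$ lies in, whereas you absorb that case analysis into the identities $e_\xi\cdot e_{\eta^\perp}=-\mathrm{sign}(\xi\times\eta)\sin\theta_{\xi,\eta}$ and $\mathrm{sign}(\xi\times\zeta)=-\mathrm{sign}(\xi\times\eta)$, which is a clean and equally valid bookkeeping of the orientation.
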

\begin{proof}
Let $\tilde{f}(\xi)=f(\xi)\cdot e_{\xi^\perp}$ and $\tilde{g}(\xi)=g(\xi)\cdot e_{\xi^\perp}$. By the bilinearity of the $\odot_\xi$-product,
\[f(\eta)\odot_\xi g(\zeta)=\tilde{f}(\eta)\tilde{g}(\zeta)e_{\eta^\perp}\odot_\xi e_{\zeta^\perp}.\]
By the definition of the $\odot$-product,
\begin{eqnarray*}
e_{\eta^\perp}\odot_\xi e_{\zeta^\perp}&=&\frac{1}{2}(e_{\eta^\perp}\otimes_\xi e_{\zeta^\perp}+e_{\zeta^\perp}\otimes_\xi e_{\eta^\perp})\\
&=&-\frac{i}{2}\left((e_\xi\cdot e_{\eta^\perp})\pi_{\xi^\perp} e_{\zeta^\perp}+(e_\xi\cdot e_{\zeta^\perp})\pi_{\xi^\perp} e_{\eta^\perp}\right)\\
&=&-\frac{i}{2}\left(\cos\theta_{\xi,\eta^\perp}\cos\theta_{\xi^\perp,\zeta^\perp}+\cos\theta_{\xi,\zeta^\perp}\cos\theta_{\xi^\perp,\eta^\perp}\right)e_{\xi^\perp}\\
&=&-\frac{i}{2}\left(\cos\theta_{\xi,\eta^\perp}\cos\theta_{\xi,\zeta}+\cos\theta_{\xi,\zeta^\perp}\cos\theta_{\xi,\eta}\right)e_{\xi^\perp}.
\end{eqnarray*}
\begin{figure}[h!]
\centering\includegraphics[scale=.65]{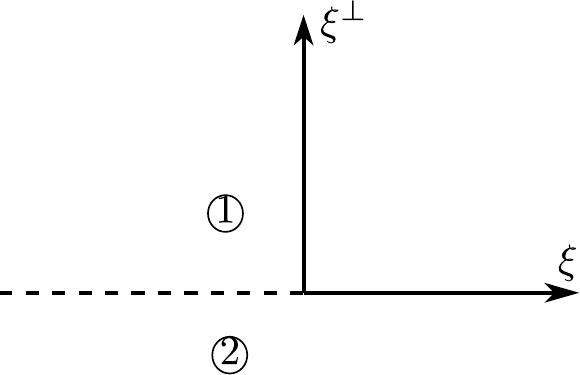}
\caption{Two halves of the plane}
\label{planesplit}
\end{figure}
Based at the origin, the vector $\xi$ partitions the plane into two halves, labeled as $\textcircled{1}$ and $\textcircled{2}$ in \autoref{planesplit}.
\begin{eqnarray*} 
\textcircled{1}&=&\{\eta\in\mathbb{R}^2:\ \xi\times\eta=\xi^\perp\cdot\eta>0\}\\
\textcircled{2}&=&\{\eta\in\mathbb{R}^2:\ \xi\times\eta=\xi^\perp\cdot\eta<0\}
\end{eqnarray*}
If $\eta\in\textcircled{1}$ then $\xi\times\eta>0$. We have $\cos\theta_{\xi,\eta^\perp}=-\sin\theta_{\xi,\eta}$ and $\cos\theta_{\xi,\zeta^\perp}=\sin\theta_{\xi,\zeta}$. Thus
\[e_{\eta^\perp}\odot_\xi e_{\zeta^\perp}=-\frac{i}{2}\sin(\theta_{\xi,\zeta}-\theta_{\xi,\eta})e_{\xi^\perp}=\frac{i}{2}\sin(\theta_{\xi,\eta}-\theta_{\xi,\zeta})e_{\xi^\perp}.\]
If $\eta\in\textcircled{2}$ then $\xi\times\eta<0$. We have $\cos\theta_{\xi,\eta^\perp}=\sin\theta_{\xi,\eta}$ and $\cos\theta_{\xi,\zeta^\perp}=-\sin\theta_{\xi,\zeta}$. Thus
\[e_{\eta^\perp}\odot_\xi e_{\zeta^\perp}=-\frac{i}{2}\sin(\theta_{\xi,\eta}-\theta_{\xi,\zeta})e_{\xi^\perp}.\]
\end{proof}
For \ref{nsedg} to be nontrivial, it is necessary that $d\ge 2$. Upon removal of the pressure and the divergence-free constraint, one arrives at the \emph{fractional Burgers equation}, a well-known toy model for the Navier-Stokes equations:
\begin{equation}\label{burgerseq}
u_t+(-\Delta)^\gamma u+\frac{1}{2}(u^2)_x=0,\ \ u(\cdot,0)=u_0,
\end{equation}
where $u=u(x,t)$, $x\in\R$, $t\ge 0$. For the corresponding range of values of $\gamma$, namely $\gamma\in (\frac{1}{2}, \frac{3}{4})$, one can also interpret the function $\chi(\xi,t)=\frac{c_0\hat{u}(\xi,t)}{h(\xi)}$ probabilistically as the expected value of a solution process $\mathbf{X}$ defined in a similar manner as \eqref{eq:822191}:
\begin{equation}\label{burgersprocess}\X(\xi,t)=\left\{ \begin{array}{*{35}{l}}
   \chi_0(\xi) & \text{if} & {{Y_\root}}\ge t,  \\
   -\frac{i}{2}\,\sign(\xi){\X}^{(1)}(W_1, t-{{Y_\root}}){\X}^{(2)}(W_2, t-{{Y_\root}}) & \text{if} & {{Y_\root}}< t.  \\
\end{array} \right.\end{equation}
The fractional Navier-Stokes equations ($d\ge 2$) and the fractional Burgers equation ($d=1$) have essentially the same underlying stochastic structure called a doubly stochastic Yule cascade to be defined in the next section.
\section{$(d,\gamma)$-DSY cascade} 
\label{Section3}
Let $\T=\{\root\}\bigcup\left(\cup_{n=1}^\infty\{1,2\}^n\right)$ be the full binary tree rooted at $\root$. The notion of a \emph{doubly stochastic Yule (DSY) cascade} was introduced in \cite{part1_2021} as a tree-indexed family of random variables $\{Y_v=\lambda^{-1}_vT_v\}_{v\in\T}$, where $\{\lambda_v\}_{v\in\T}$ is a family of positive random variables independent of the family of  i.i.d.\ mean-one exponentially distributed  random variables
$\{T_v\}_{v\in\T}$. 
\begin{defin}\label{dsy}
For $d\ge 1$ and $\gamma\in(\frac{1}{2},\frac{d+2}{4})$, a $(d,\gamma)$-DSY cascade is a tree-index family $\{Y_v=\lambda^{-1}_vT_v\}_{v\in\T}$ in which $\{T_v\}_{v\in\T}$ is an i.i.d.\ mean-one exponentially distributed family of random variables and $\lambda_v=|W_v|^{2\gamma}$, where $\{W_v\}_{v\in\T}$ is a family of random vectors in $\R^d$ distributed as follows:
\begin{enumerate}[(i)]
\item $W_\root=\xi,$\vspace{-4pt}
\item Given $W_v$, $Y_v$ is exponentially distributed with intensity $|W_v|^{2\gamma}$, $W_{v1}$ is distributed as $H(\cdot|W_v)$, and $W_{v2}=W_v-W_{v1}$,\vspace{-4pt}
%\item Given $W_v$, $T_v$ is exponentially distributed with mean $|W_v|^{-2}$.\vspace{-4pt}
\item Given $W_{v1}$ and $W_{v2}$, the sub-families $\{W_{v1\sigma}\}_{\sigma\in\mathbb{T}}$ and $\{W_{v2\sigma}\}_{\sigma\in\mathbb{T}}$ are independent of each other.
\end{enumerate}
\end{defin}
The DSY cascade associated with the fractional Navier-Stokes equations arises when recursively applying formula \eqref{eq:822191}. \autoref{cascade} is a cascade figure that illustrates the families $\{{Y}_{v}\}_{v\in\T}$ and $\{W_{v}\}_{v\in\T}$ induced from the recursion \eqref{eq:822191}. Although the definition of the $\odot_\xi$-product requires $d\ge 2$, the DSY cascade can be defined for any dimension $d\ge 1$.
\begin{figure}[h!]
\centering
\includegraphics[scale=1]{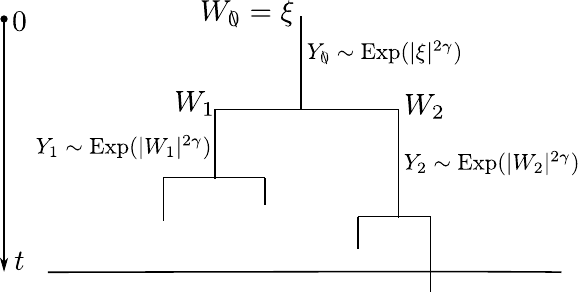}
\caption{Cascade figure that illustrates $\{{Y}_{v}\}_{v\in\T}$ and $\{W_{v}\}_{v\in\T}$.}
\label{cascade}
\end{figure}
%For each vertex $v\in\T$, we denote by $|v|$ the genealogical height of vertex $v$, and $v|j$ denotes the truncation up to the $j$'th generation with the convention that $v|0=\theta$. 
With the time evolution of the $(d,\gamma)$-cascade $\{Y_v\}_{v\in\T}$, the recursion \eqref{eq:822191} terminates at a vertex $v\in\T$ if and only if
 \[
 \sum_{j=0}^{|v|-1}Y_{v|j}<t\le\sum_{j=0}^{|v|}Y_{v|j}\,.
 \]
Such a vertex will be referred to as a \emph{$t$-leaf}. The continuous-parameter Markov process of sets of $t$-leaves is denoted by
\begin{equation}
\label{tleaves}
\partial{V}(\xi,t) =\left\{v\in\T: \sum_{j=0}^{|v|-1}Y_{v|j}<t\le\sum_{j=0}^{|v|}Y_{v|j}\right\}.
 \end{equation}
We denote the set of $t$-\emph{internal vertices} by
 \begin{equation}
\label{tancestors}
\Vo(\xi,t) = \left\{u\in\T:\ \sum_{j=0}^{|u|}Y_{u|j}<t\right\},
%\stackrel{o}{V}(t) = \left\{v|j: v\in\partial V(t),\, 0\le j<|v|\right\},
\end{equation}
and the $t$-subtree associated with $\{Y_v\}_{v\in\T}$ by
\begin{equation}\label{ttree}
V(\xi,t) = \Vo(\xi,t) \cup \partial{V(\xi,t)}.
\end{equation}
\begin{defin}
Let $S=S_\xi$ and $L=L_\xi$ be the explosion and hyperexplosion times defined in \eqref{explosiontime}-\eqref{hyperexplosiontime}. We refer to the events
\begin{itemize}
%\item The events $[S\le t]$ and $[L\le t]$ are referred to as the event of \emph{explosion by time $t$} and \emph{hyperexplosion by time $t$}, respectively.
\item $[S\le t]=[\Vo(t)\text{ is an infinite set}]$ as the event of \emph{explosion by time $t$},
\item $[S> t]=[\Vo(t)\text{ is a finite set}]$ as the event of \emph{non-explosion by time $t$},
\item $[L\le t]=[\partial V(t)=\emptyset]$ as the event of \emph{hyperexplosion by time $t$},
\item $[L> t]$ as the event of \emph{non-hyperexplosion by time $t$}.
\end{itemize}
The DSY cascade is said to be \emph{explosive} if $\P(S<\infty)>0$, and \emph{hyperexplosive} if $\P(L<\infty)>0$. Otherwise, it is said to be \emph{non-explosive} and \emph{non-hyperexplosive}, respectively.
% \begin{itemize}
% \item explosive or hyperexplosive by time $t$ if the corresponding event occurs with a positive probability,
% \item non-explosive or non-hyperexplosive by time $t$ if the corresponding event occurs almost surely,
% \item explosive if $\P(S<\infty)>0$, and hyperexplosive if $\P(L<\infty)>0$. Otherwise, it is said to be non-explosive and non-hyperexplosive, respectively.
% \end{itemize}
\end{defin}
\noindent Whenever $V(\xi,t)$ is a finite set, i.e. on the event $[S>t]$, one has 
\begin{equation}\label{leafancestorcounts}
|\partial V(\xi,t)| = |\Vo(\xi,t)|+1,
\quad |V(\xi,t)| = 2 |\Vo(\xi,t)|+1.
\end{equation}
Moreover, the solution process $\mathbf{X}(\xi,t)$ at \eqref{eq:822191} is a $\odot_\xi$-product, in a suitable order, of the initial data $\chi_0$ evaluated on the $t$-leaves $\partial{V(\xi,t)}$. For example, in the tree realization in \autoref{cascadeexample},
\[\partial V(\xi,t)=\{11,121,122,21,22\},\]
\[\X(\xi,t)=(\chi_0(W_{11})\odot_{W_1}(\chi_0(W_{121})\odot_{W_{12}}\chi_0(W_{122})))\odot_\xi(\chi_0(W_{21})\odot_{W_2}\chi_0(W_{22})).\]
\begin{figure}[h!]
\centering
\includegraphics[scale=1]{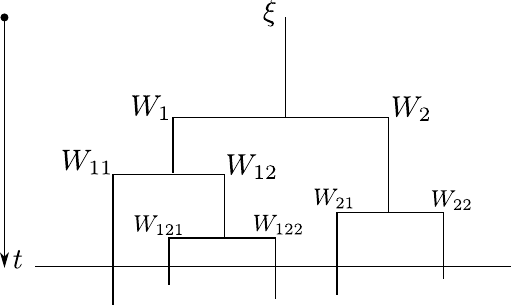}
\caption{Cascade that is non-explosive by time $t$.}
\label{cascadeexample}
\end{figure}

The purpose of the rest of this section is to give criteria for explosion, non-explosion, hyperexplosion, and non-hyperexplosion in terms of the dimension $d$ and the fractional power $\gamma$ (\autoref{exp-nonexp}).
Our criteria involve the distribution of the ratio of the magnitude of the offspring relative to that of the parent.  The next subsection presents some of these results.

\subsection{Distribution of the ratios}
In the following proposition, $\overleftarrow{v}$ denotes the parent vertex of $v$, and $\partial\T=\{1,2\}^{\mathbb{N}}$ denotes the set of rays emanating from the root.
\begin{prop}\label{ratiodist}
Let $d\ge 1$ and $\gamma\in(\frac{1}{2},\frac{d+2}{4})$. For any $v\in\T\backslash\{\root\}$, the pdf of the ratio $R_v=\frac{|W_v|}{|W_{\overleftarrow{v}}|}$ is given by
\begin{itemize}
\item For $d=1$,
\begin{equation}\label{ratiopdf1}
g_{1,\gamma}(r)=c_{1,\gamma} (|r-r^2|^{2\gamma-2}+(r+r^2)^{2\gamma-2})\end{equation}
where $c_{1,\gamma}>0$ is the constant given by \eqref{constantc}.
\item For $d>1$,
\begin{equation}\label{ratiopdf}
g_{d,\gamma}(r)={{C}_{d,\gamma }} \frac{r^{2\gamma-2}}{{{\left( {{r}^{2}}+1 \right)}^{a}}}\ {{}_{2}}{{F}_{1}}\left( \frac{a}{2},\frac{a+1}{2};\frac{d}{2};\frac{4{{r}^{2}}}{{{\left( {{r}^{2}}+1 \right)}^{2}}} \right)
\end{equation}
where $a=\frac{d+1}{2}-\gamma>0$ and
\[{{{C}}_{d,\gamma }}=\frac{2\Gamma (2\gamma -1)\Gamma {{(a)}^{2}}}{\Gamma \left( \frac{d}{2} \right)\Gamma \left( 2a-\frac{d}{2} \right)\Gamma {{\left( \frac{2\gamma -1}{2} \right)}^{2}}}.\]
\end{itemize}
Moreover, for any $s\in\partial\mathbb{T}$, the sequence
$\{R_{s|k}\}_{k\ge 1}$, is i.i.d.
\end{prop}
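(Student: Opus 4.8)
The plan is to compute the density of $R_v = |W_v|/|W_{\overleftarrow v}|$ directly from the transition kernel $H(\cdot\mid\xi)$ in \eqref{Hkernel}. By the scaling structure, it suffices to take $\overleftarrow v$ with $W_{\overleftarrow v} = \xi$ a fixed unit vector (or carry $|\xi|$ through and watch it cancel): conditionally on $W_{\overleftarrow v}$, the child $W_v$ has density $H(\eta\mid\xi) = h(\eta)h(\xi-\eta)/(|\xi|^{2\gamma-1}h(\xi))$ with $h(\xi)=c_{d,\gamma}|\xi|^{2\gamma-d-1}$, so up to the normalizing constant
\[
H(\eta\mid\xi)\ \propto\ |\eta|^{2\gamma-d-1}\,|\xi-\eta|^{2\gamma-d-1},
\]
and I want the law of $r=|\eta|$ (with $|\xi|=1$). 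First I would write $\eta$ in polar-type coordinates relative to $\xi$: set $|\eta|=r$ and let $\phi$ be the angle between $\eta$ and $\xi$, so $|\xi-\eta|^2 = 1 + r^2 - 2r\cos\phi$, and integrate out all the remaining $d-2$ angular degrees of freedom, picking up the surface-area factor $\omega_{d-2}(\sin\phi)^{d-2}$ together with the radial Jacobian $r^{d-1}$. This reduces the problem to
\[
g_{d,\gamma}(r)\ \propto\ r^{2\gamma-2}\int_0^\pi (\sin\phi)^{d-2}\,(1+r^2-2r\cos\phi)^{2\gamma-d-1}\,d\phi,
\]
where the power $r^{2\gamma-d-1}\cdot r^{d-1}=r^{2\gamma-2}$ comes out front, matching the claimed prefactor.

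The core computation is then the one-dimensional angular integral. For $d=1$ there is no angular integration: $\eta\in\mathbb R$ takes the two signs $\eta=\pm r$, giving $|1-r|^{2\gamma-2}$ and $(1+r)^{2\gamma-2}$ respectively, which is exactly \eqref{ratiopdf1} once one identifies the constant with $c_{1,\gamma}$ from \eqref{constantc}. For $d\ge 2$ I would evaluate $\int_0^\pi (\sin\phi)^{d-2}(1+r^2-2r\cos\phi)^{2\gamma-d-1}\,d\phi$ via the substitution $t=\cos\phi$ and recognize it as a standard Euler-type integral representation of the Gauss hypergeometric function. Writing $1+r^2-2rt = (1+r^2)\bigl(1 - \tfrac{2r}{1+r^2}t\bigr)$ pulls out $(1+r^2)^{2\gamma-d-1}$; but to land on the stated argument $4r^2/(1+r^2)^2$ rather than $2r/(1+r^2)$, I would instead split the $t$-integral into $[-1,0]$ and $[0,1]$ (or use the $\phi\mapsto\pi-\phi$ symmetry to combine $\pm t$), which converts the linear factor $(1-ct)$ into a quadratic factor $(1-c^2t^2)$ under $t^2\mapsto s$, producing the squared argument $c^2=4r^2/(1+r^2)^2$. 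The resulting $\int_0^1 s^{(d-3)/2}(1-s)^{?}(1-c^2 s)^{2\gamma-d-1}\,ds$ is then a Beta-function times ${}_2F_1$ by the Euler integral formula, and collecting the Beta factors plus the overall $H$-normalization constant from \eqref{constantc}–\eqref{hkernel} yields the constant $C_{d,\gamma}$.

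The main obstacle I anticipate is purely bookkeeping: getting the hypergeometric parameters $(a/2,(a+1)/2;d/2)$ and the constant $C_{d,\gamma}$ exactly right. This requires (a) choosing the split/substitution that manufactures the squared argument $4r^2/(1+r^2)^2$ — a quadratic transformation of the hypergeometric is implicitly being used, so one must be careful which of the equivalent ${}_2F_1$ forms one lands in; (b) tracking the normalization of $H(\cdot\mid\xi)$, which is where the identity \eqref{hkernel} $|\xi|^{2\gamma-1}h(\xi)=h*h(\xi)$ guarantees $H$ integrates to one and pins down $c_{d,\gamma}$; and (c) assembling $C_{d,\gamma}$ from the product of the angular Beta integral, the sphere measure constant $\omega_{d-2}$, and $c_{d,\gamma}$, then simplifying the Gamma functions using duplication/reflection until it matches $\tfrac{2\Gamma(2\gamma-1)\Gamma(a)^2}{\Gamma(d/2)\Gamma(2a-d/2)\Gamma((2\gamma-1)/2)^2}$. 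None of these steps is deep, but each is error-prone, so I would verify the final density by checking $\int_0^\infty g_{d,\gamma}(r)\,dr = 1$ and, as a sanity check, confirm that the $d=1$ formula is the $d\to 1$ degenerate case (where ${}_2F_1(\cdot,\cdot;1/2;\cdot)$ collapses to elementary functions reproducing $|r-r^2|^{2\gamma-2}+(r+r^2)^{2\gamma-2}$).
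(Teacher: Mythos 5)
Your proposal follows essentially the same route as the paper: reduce to the law of $|\eta|/|\xi|$ under $H(\cdot\mid\xi)$, pass to spherical coordinates about $\xi$ so the density becomes $r^{2\gamma-2}$ times the angular integral $\int_0^\pi \sin^{d-2}\phi\,(1+r^2-2r\cos\phi)^{-a}\,d\phi$ (note your displayed exponent $2\gamma-d-1$ should be $(2\gamma-d-1)/2=-a$, since it sits on $|\xi-\eta|^{2}$), fold via $\phi\mapsto\pi-\phi$ to produce the squared argument $4r^2/(1+r^2)^2$, and identify the result as a ${}_2F_1$; the $d=1$ case is handled identically by summing over the two signs of $\eta$. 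The only difference is that the paper cites Gradshteyn--Ryzhik 3.682 (or Mathematica) for the angular integral where you sketch deriving the Euler-integral identity by hand.
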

\begin{proof}
%Let $R=|\ell_1|/|\ell|$. We compute the p.d.f.\ of $R$ as follows. 
Let us first consider $v=1$. Let $R=R_1=|W_1|/|\xi|$ and let $k:[0,\infty)\to\R$ be any bounded measurable function. For $d=1$,
\begin{eqnarray*}
\mathbb{E}[ k(R) ]&=&\int_{-\infty }^{\infty }{k \left( \frac{|\eta |}{|\xi |} \right)H(\eta |\xi )d\eta }={{c}_{1,\gamma }}\int_{-\infty }^{\infty }{k \left( \frac{|\eta |}{|\xi |} \right)\frac{|\eta {{|}^{2\gamma -2}}|\xi -\eta {{|}^{2\gamma -2}}}{|\xi {{|}^{4\gamma -3}}}d\eta }\\
&=&{{c}_{1,\gamma }}\int_{0}^{\infty }{k \left( \frac{\eta }{|\xi |} \right)\frac{{{\eta }^{2\gamma -2}}}{|\xi {{|}^{4\gamma -3}}}\left( |\xi -\eta {{|}^{2\gamma -2}}+|\xi +\eta {{|}^{2\gamma -2}} \right)d\eta }\\
&=&{{c}_{1,\gamma }}\int_{0}^{\infty }{k \left( r \right){{r}^{2\gamma -2}}\left( |1-r{{|}^{2\gamma -2}}+(1+r)^{2\gamma -2} \right)dr}.
\end{eqnarray*}
This gives the probability density function \eqref{ratiopdf1}. It is easy to check that the inverse ratio $\frac{1}{R}$ has a probability density function
\[\tilde{g}_{1,\gamma}(r)={{g}_{1,\gamma}}\left( \frac{1}{r} \right)\frac{1}{{{r}^{2}}}=\frac{{{g}_{1,\gamma}}(r)}{{{r}^{6\gamma -4}}}.\]
Thus, $R$ is symmetrically distributed on the multiplicative group $(0,\infty)$ if and only if $\gamma=\frac{2}{3}$. For $d\ge 2$,
\[\mathbb{E}[k(R)]=\int_{{{\mathbb{R}}^{d}}\times {{\mathbb{R}}^{d}}}{k\left( \frac{|\eta|}{|\xi |} \right)H(\eta|\xi)}d\eta={{c}_{d,\gamma }}\int_{{{\mathbb{R}}^{d}}}{k\left( \frac{|\eta|}{|\xi |} \right)\frac{|\xi {{|}^{d+2-4\gamma }}}{|\eta{{|}^{d+1-2\gamma }}|\xi-\eta{{|}^{d+1-2\gamma }}}d\eta}\]
Let $u=\xi/|\xi|$ and $\tilde{u}=\eta/|\xi|$. Then one can simplify $\EXP[k(R)]$ as
\begin{equation}\label{27241}\mathbb{E}[k(R)]={{c}_{d,\gamma }}\int_{{{\mathbb{R}}^{d}}}{k(|\tilde{u}|)\frac{1}{|\tilde{u}{{|}^{d+1-2\gamma }}|u-\tilde{u}{{|}^{d+1-2\gamma }}}d\tilde{u}}.\end{equation}
%With the change of variables $r=|v|$ and $w=v/|v|\in \mathbb{S}^{n-1}$, we get
%\[\mathbb{E}[k(R)]={{c}_{d,\gamma }}\int_{0}^{\infty }{\int_{{\mathbb{S}^{n-1}}}{k(r)\frac{1}{{{r}^{d+1-2\gamma }}{{(1-2ru\cdot w+{{r}^{2}})}^{d+1-2\gamma }}}dwdr}}\]
%where $\phi\in[0,\pi]$ is the angle between $u$ and $w$.
%\[={{c}_{d,\gamma }}\int_{0}^{\infty }{\int_{0}^{2\pi }{k(r)\frac{1}{{{r}^{d-2\gamma }}{{(1-2r\cos \phi +{{r}^{2}})}^{d+1-2\gamma }}}dr}}\]
Choose the spherical coordinates $(r,\phi,w)$ in $\mathbb{R}^d$ such that $\tilde{u}=u r\cos\phi+wr\sin\phi$ where $r=|\tilde{u}|$, $w\in \mathbb{S}_u^{d-2}$, the unit sphere in the hyperplane perpendicular to $u$, and $\phi\in[0,\pi]$ is the angle between $u$ and $\tilde{u}$. Then $d\tilde{u}=r^{d-1}\sin^{d-2}\phi\, dr\,d\phi\,dw$. 
Therefore, %in the polar coordinates $(y,\phi,w)$, 
\begin{eqnarray}\label{EkR}\mathbb{E}[k(R)]&=&\bar{C}_{d,\gamma}\int_{0}^{\infty }{\int_{0}^{\pi }{k(r){{r}^{2\gamma-2 }}\frac{\sin^{d-2}(\phi)}{{{(1-2r\cos \phi +{{r}^{2}})}^{\frac{d+1-2\gamma}{2} }}}d\phi dr}}
\end{eqnarray}
where
\begin{equation}\label{constantC}{\bar{C}_{d,\gamma }}=|{{\mathbb{S}}^{d-2}}|{{c}_{d,\gamma }}=\frac{2}{\sqrt{\pi }}\frac{\Gamma (2\gamma -1)\Gamma {{\left( a \right)}^{2}}}{\Gamma \left( \frac{d-1}{2} \right)\Gamma \left( 2a-\frac{d}{2} \right)\Gamma {{\left( \frac{2\gamma -1}{2} \right)}^{2}}}.\end{equation}
Then the p.d.f.\ of $R$ is
\[g_{d,\gamma}(r)=\bar{C}_{d,\gamma}{{r}^{2\gamma-2 }}\int_{0}^{\pi /2}{{{\sin }^{d-2}}\phi \left( {{(1-2r\cos \phi +{{r}^{2}})}^{-a}}+{{(1+2r\cos \phi +{{r}^{2}})}^{-a}} \right)d\phi }\]
With the aid of Mathematica to evaluate the above integral (or by using the double-angle identity $\sin\phi=2\sin(\phi/2)\cos(\phi/2)$ and \cite[Formula 3.682, p.415]{gradshteyn} to compute the inner integral of \eqref{EkR}), one obtains
\begin{eqnarray*}
g_{d,\gamma}(r)&=&\sqrt{\pi}{\bar{C}_{d,\gamma }} \frac{\Gamma\left(\frac{d-1}{2}\right)}{\Gamma\left(\frac{d}{2}\right)}\frac{r^{2\gamma-2}}{{{\left( {{r}^{2}}+1 \right)}^{a}}}\ {{}_{2}}{{F}_{1}}\left( \frac{a}{2},\frac{a+1}{2};\frac{d}{2};\frac{4{{r}^{2}}}{{{\left( {{r}^{2}}+1 \right)}^{2}}} \right)\\
&=&{C}_{d,\gamma}\frac{r^{2\gamma-2}}{{{\left( {{r}^{2}}+1 \right)}^{a}}}\ {{}_{2}}{{F}_{1}}\left( \frac{a}{2},\frac{a+1}{2};\frac{d}{2};\frac{4{{r}^{2}}}{{{\left( {{r}^{2}}+1 \right)}^{2}}} \right).
\end{eqnarray*}
By exactly the same calculation and by conditioning on $W_v$, one can show that the pdf of $R_v$ is $g_{d,\gamma}$ for any $v\in\T\backslash\{\root\}$. For each fixed $s\in\partial\T$, the independence of the sequence $\{R_{s|k}\}_{k\ge 1}$ follows from induction and
the Markov property of the sequence $\{W_{s|k}\}_{k\ge 0}$.
\end{proof}
\begin{defin}(see \cite[Def.\ 2.5, Lem.\ 7.1]{part2_2021})
The DSY cascade $\{Y_v\}_{v\in\T}$ is said to be \emph{self-similar} if for any initial state $W_\root=\xi$ and for any $s\in\partial\T$, the ratios $\{R_{s|j}=|W_{s|j}|/|W_{s|j-1}|\}_{j\ge 1}$ are i.i.d.\ with a common distribution independent of $s$.
\end{defin}
\begin{prop}\label{symmetry1}
Let $d\ge 1$ and $\gamma\in(\frac{1}{2},\frac{d+2}{4})$. We have the following statements.
\begin{enumerate}[(a)]
\item A $(d,\gamma)$-DSY cascade is self-similar. 
\item The ratio $R_v=\frac{|W_v|}{|W_{\overleftarrow{v}}|}$ is symmetrically distributed on the multiplicative group $(0,\infty)$, i.e.\ having the same distribution as $R_v^{-1}$, if and only if $\gamma=\frac{d+3}{6}$.
\item $\EXP[\ln R_v]$ has the same sign as $\gamma-(d+3)/6$. That is,
\[\EXP[\ln {{R}_{v}}]\,\left\{ \begin{array}{*{35}{l}}
   <0 & \textup{if} & \gamma <(d+3)/6,  \\
   =0 & \textup{if} & \gamma =(d+3)/6,  \\
   >0 & \textup{if} & \gamma >(d+3)/6.  \\
\end{array} \right.\]
\end{enumerate}
\end{prop}
\begin{proof}
By \autoref{ratiodist}, the ratios $\{R_{s|k}\}_{k\ge 1}$ are i.i.d. Thus, a $(d,\gamma)$-DSY cascade is self-similar. The inverse ratio $\frac{1}{R}$ has a probability density function
\[\tilde{g}_{d,\gamma}(r)={{g}_{d,\gamma}}\left( \frac{1}{r} \right)\frac{1}{{{r}^{2}}}=\frac{{{g}_{d,\gamma}}(r)}{{{r}^{4\gamma -2a-2}}}=\frac{g_{d,\gamma}(r)}{r^{6\gamma-d-3}}.\]
Therefore, $R$ is symmetrically distributed on the multiplicative group $(0,\infty)$ if and only if $\gamma=\frac{d+3}{6}$. To prove Part (c), we decompose $\EXP[\ln R_v]$ into positive and negative parts as follows.
\begin{equation}\label{Rvsplit}
\EXP[\ln R_v]=\int_0^\infty g_{d,\gamma}(r)\ln r\,dr=\int_0^1\ldots+\int_1^\infty\ldots
\end{equation}
Using the change of variable $r\mapsto 1/r$ for the first integral of RHS\eqref{Rvsplit}, one can rewrite \eqref{Rvsplit} as
\begin{align}
\label{Rvsplit2}\EXP[\ln R_v]&=\int_1^\infty \tilde{g}_{d,\gamma}(r)\ln(1/r)\,dr+\int_1^\infty g_{d,\gamma}(r)\ln r\,dr=\int_1^\infty (1-r^{d+3-6\gamma})g_{d,\gamma}(r)\ln r\,dr.
\end{align}
Part (c) follows straightforwardly from this representation of $\EXP[\ln R_v]$.
\end{proof}

\begin{prop}\label{angle-distr-prop}
Let $d>1$. For each $v\in\T$, let $\Phi_{v1}\in[0,\pi]$ be the angle between $W_v$ and $W_{v1}$, and $\Phi_{v2}\in[0,\pi]$ be the angle between $W_v$ and $W_{v2}$. The joint distribution density of the random pair $(\Phi_1,\Phi_2)\in\triangle:=\{(\phi_1,\phi_2):\ \phi_1,\phi_2\ge 0,\ \phi_1+\phi_2\le\pi\}$ is given by
\begin{equation}\label{angle-distr}
f_{d,\gamma}(\phi_1,\phi_2)=\bar{C}_{d,\gamma} (\sin\phi_1\sin\phi_2)^{2\gamma-2}{\sin }^{d+1-4\gamma}(\phi_1+\phi_2)
\end{equation}
where the constant $\bar{C}_{d,\gamma}$ is given by \eqref{constantC}.
\end{prop}
\begin{proof}
It is sufficient to consider the root vertex $v=\root$. Let us denote the values of random variables $W_\root,W_1,\Phi_1,\Phi_2$ respectively by $\xi, \eta,\phi_1,\phi_2$. Let $u=\xi/|\xi|$ and $\tilde{u}=\eta/|\xi|$. By the law of sines in the triangle made by $u$, $\tilde{u}$, $u-\tilde{u}$, we get 
\begin{eqnarray}
\label{210241}r=|\tilde{u}|&=&\frac{\sin \phi_2}{\sin(\pi-\phi_1-\phi_2)}=\frac{\sin\phi_2}{\sin(\phi_1+\phi_2)}\\
\label{28241}|u-\tilde{u}|&=&\frac{\sin \phi_1}{\sin(\pi-\phi_1-\phi_2)}=\frac{\sin\phi_1}{\sin(\phi_1+\phi_2)}.
\end{eqnarray}
Thus, the Jacobian of the change of variables from $(r,\phi,w)$ to $(\phi_1,\phi_2,w)$ is $\frac{\partial(r,\phi,w)}{\partial(\phi_1,\phi_2,w)}=\frac{-\sin\phi_1}{\sin^2(\phi_1+\phi_2)}$. We obtain
\begin{equation}\label{27242}
d\tilde{u}=r^{d-1}\sin^{d-2}\phi\, dr d\phi dw=\frac{(\sin\phi_1\sin\phi_2)^{d-1}}{\sin^{d+1}(\phi_1+\phi_2)}d\phi_1d\phi_2dw.
\end{equation}
Let $k:\triangle\to\R$ be a continuous and bounded function. By \eqref{27241} and \eqref{27242},
\begin{eqnarray*}
\mathbb{E}[k(\Phi_1,\Phi_2)]&=&{{c}_{d,\gamma }}\int_{{{\mathbb{R}}^{d}}}{k(\phi_1,\phi_2)\frac{1}{|\tilde{u}{{|}^{d+1-2\gamma }}|u-\tilde{u}{{|}^{d+1-2\gamma }}}d\tilde{u}}\\
&=&{{c}_{d,\gamma }}\int_{\mathbb{S}_{u}^{d-2}}{\int_{\triangle }{k({{\phi }_{1}},{{\phi }_{2}}){{(\sin {{\phi }_{1}}\sin {{\phi }_{2}})}^{2\gamma -2}}{{\sin }^{d+1-4\gamma }}({{\phi }_{1}}+{{\phi }_{2}})d{{\phi }_{1}}d{{\phi }_{2}}dw}}\\
&=&{{{\bar{C}}}_{d,\gamma }}\int_{\triangle }{k({{\phi }_{1}},{{\phi }_{2}}){{(\sin {{\phi }_{1}}\sin {{\phi }_{2}})}^{2\gamma -2}}{{\sin }^{d+1-4\gamma }}({{\phi }_{1}}+{{\phi }_{2}})d{{\phi }_{1}}d{{\phi }_{2}}}.
\end{eqnarray*}
Hence, the joint distribution of $(\Phi_2,\Phi_2)$ is given by \eqref{angle-distr}.
\end{proof}
\begin{remark} In the classical case $d=3$ and $\gamma=1$, Le Jan and Sznitman \cite[Prop.\ 1.2]{lejan} remarked that the angular vector $(\Phi_1,\Phi_2)$ is uniformly distributed in the triangle $\triangle$. It is easy to see that this is the case if and only if  $d=3$ and $\gamma=1$. 
\end{remark}

\begin{remark}
As far as the distribution of the ratios $R_v$ or of the angles $(\Phi_1,\Phi_2)$ is concerned, one can assume $|\xi|=1$. The symmetry with respect the multiplicative group $(0,\infty)$ of the ratio $R$ 
as noted in \autoref{symmetry1} can be seen as a consequence of the invariance of the joint
distribution of $(\Phi_1,\Phi_2)$ with respect to the Kelvin inversion $W_1\to W^*_1$ with respect to the unit sphere. This can be proved as follows. In terms of the angles, the Kelvin inversion is the transformation (see \autoref{Kelvin})
\begin{equation}\label{coord-transf}
(\phi_1,\phi_2) \rightarrow (\phi_1, \phi_3), \qquad \phi_3=\pi-(\phi_1+\phi_2).
\end{equation}
%This can proved using the joint distribution of the angles given in \autoref{angle-distr-prop} as follows. 
Since $\sin(\phi_1+\phi_2) = \sin(\phi_3)$, the joint density of the angles is 
invariant iff 
\[
d+1-4\gamma= 2\gamma -2
\]
i.e. iff $\gamma=\tfrac{d+3}{6}$. 
% The coordinate transformation \eqref{coord-transf}
% will be used again in the proof of \autoref{exp-nonexp} (b), highlighting the importance 
% of this symmetry in establishing an explosion criterion of the DSY cascade $\bY$.
\end{remark}
\begin{figure}[h!]
    \centering
    \includegraphics[scale=.8]{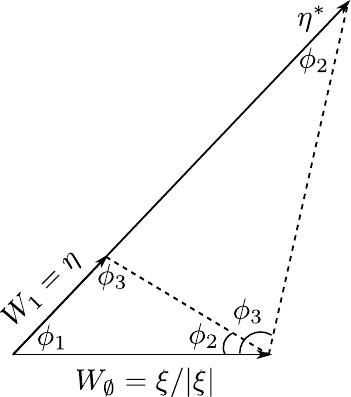}
    \caption{Kelvin inversion $\eta\mapsto\eta^*$, where $W_1=\eta$ and $\eta^*=\frac{1}{|\eta|^2}\eta$.}
    \label{Kelvin}
\end{figure}
\begin{remark}
Alternatively, instead of angles $\Phi_1$ and $\Phi_2$, one can describe the basic triangle formed by $W_v$, $W_{v1}$, and $W_{v2}$ in terms of normalized side lengths: $1=|W_v|/|W_v|$, $R_1=|W_{v1}|/|W_v|$, and  $R_2=|W_{v2}|/|W_v|$. The  joint distribution of $R_1$ and $R_2$ is given by the pdf
\begin{equation}\label{r1r2-distr}
h_{d,\gamma}(r_1,r_2)=\tilde{C}_{d,\gamma}\frac{A(1,r_1,r_2)^{d-3}}{(r_1r_2)^{d-2\gamma}} \IND{D}(r_1, r_2),
\end{equation}
where $$A(1,r_1,r_2)=\frac{1}{4}\sqrt{(1+r_1+r_2)(1+r_1-r_2)(1+r_2-r_1)(r_1+r_2-1)}$$ is the area of the triangle with side lengths $1$, $r_1$, $r_2$ (Heron's formula) and $$
D=\{ (r_1, r_2)\in\mathbb{R}^2 : r_1, r_2>0, r_1+r_2>1, |r_1-r_2|<1\}.
$$
\end{remark}
The moments of random variables $R_v$ will play a defining role in our explosive/non-explosive criteria of the corresponding DSY cascade.

\begin{prop}\label{moments}
Let $d\ge 1$ and $\gamma\in(\frac{1}{2},\frac{d+2}{4})$. One has:
\begin{enumerate}[(i)] 
%\item 
%{\color{blue}{(This part can be removed, given the focus on $\log(R_{max})$....)}}%For any $v\in\T$,
%\begin{equation}\label{Rmax}\EXP[R_{\max}^{-2\gamma}]=2{\bar{C}_{d,\gamma}}\int_{0}^{\pi/2}\int_0^{\phi_1}\frac{({{\sin }^{d+1-2\gamma}}({{\phi }_{1}}+{{\phi }_{2}})+{{\sin }^{d+1-2\gamma}}({{\phi }_{1}}-{{\phi }_{2}}))\sin^{2\gamma-2}\phi_2}{\sin^2\phi_1} d\phi_2d\phi_1\end{equation}
%where $R_{\max}=\max\{R_{1},R_{2}\}$, $R_v=|W_v|/|W_\theta|$ for $v=1,2$.
\item For all $b\in\mathbb{R}$, $\EXP[R^b]=\EXP[R^{\bar{b}}]$ where $\bar{b}=d+3-6\gamma-b$.
%\item If $\gamma\in(\frac{1}{2},\frac{d+3}{6})$ then
%\[\min\limits_{b>0}\EXP[R^b]=\EXP[R^{b^*}]={\bar{C}_{d,\gamma}}\int_{0}^{\pi}\int_0^{\pi-\phi_1}(\sin\phi_2\,\sin(\phi_1+\phi_2))^{\frac{d-1}{2}-\gamma}\sin^{2\gamma-2}\phi_1d\phi_2d\phi_1\]
%where $b^*=\frac{d+3}{2}-3\gamma>0$.
\item Let $b^*=\frac{d+3}{2}-3\gamma$. Then $\min\limits_{b\in\R}\EXP[R^b]=\EXP[R^{b^*}]$ and
\begin{align}
%\nonumber&\min\limits_{b\in\R}\EXP[R^b]=\EXP[R^{b^*}]\\
\label{moment}\EXP[R^{b^*}]&=\left\{ \begin{array}{*{35}{r}}
   \frac{2\,\Gamma(1-\gamma)^3\,\Gamma(2\gamma-1)^2}{\sqrt{\pi}\,\Gamma\!\left(\frac{3}{2}-2\gamma\right)\,\Gamma\!\left(\gamma-\frac{1}{2}\right)^2\,\Gamma(\gamma)} & \textup{if} & d=1,  \\[3ex]
{\bar{C}_{d,\gamma}}\int_{0}^{\pi}\int_0^{\pi-\phi_1}(\sin\phi_2\,\sin(\phi_1+\phi_2))^{\frac{d-1}{2}-\gamma}\sin^{2\gamma-2}(\phi_1)d\phi_2d\phi_1 & \textup{if} & d>1.  \\
\end{array} \right.\end{align}
where $\bar{C}_{d,\gamma}$ is the constant given by \eqref{constantC}.
%\item If $\gamma\in[\frac{d+3}{6},\frac{d+2}{4})$ then $\EXP[R^b]>1$ for all $b>0$. If $\gamma\in(\frac{1}{2},\frac{d+3}{6}]$ then $\EXP[R^b]>1$ for all $b<0$. 
\item $\EXP[R^b]\ge 1$ for all $(b,\gamma)\in(0,\infty)\times[\frac{d+3}{6},\frac{d+2}{4})\bigcup\,(-\infty,0)\times (\frac{1}{2},\frac{d+3}{6}]$.
\end{enumerate}
\end{prop}
\begin{proof}
%(i) Let $W_0=\xi$, $W_1=\eta$, $u=\xi/|\xi|$, $\tilde{u}=\eta/|\xi|$. On the event $[\Phi_1\ge\Phi_2]$, we have $R_{\max}=|u-\tilde{u}|$. By \eqref{28241},
%\begin{eqnarray*}
%\EXP[R_{\max}^{-2\gamma}]&=&2\iint\limits_{{{\Delta }_{1}}}{|u-\tilde{u}|^{-2\gamma}{{S}_{d,\gamma}}({{\phi }_{1}},{{\phi }_{2}})d{{\phi }_{2}}d{{\phi }_{1}}}\\
%&=&2{\bar{C}_{d,\gamma}}\int_{0}^{\pi/2 }{\int_{0}^{{{\phi }_{1}}}+\int_{\pi/2}^{\pi}\int_0^{\pi-\phi_1}{\frac{{{\sin }^{d+1-2\gamma}}({{\phi }_{1}}+{{\phi }_{2}})\sin^{2\gamma-2}\phi_2}{{{\sin }^{2}}{{\phi }_{1}}}d{{\phi }_{2}}d{{\phi }_{1}}}}
%\end{eqnarray*}
%where $\triangle_1=\{(\phi_1,\phi_2)\in\triangle:\phi_1\ge\phi_2\}$.
%Using the change of variables $\phi_1\mapsto \pi-\phi_1$, we can rewrite the above integral as \eqref{Rmax}.
(i) Consider the case $d=1$. We know that $\EXP[R^b]=\int_0^\infty g_{1,\gamma}(r)r^bdr$, where $g_{1,\gamma}$ the pdf of $R_1$ given by \eqref{ratiopdf1}. Using the change of variable $r\mapsto 1/r$, we get
\[\EXP[R^b]=\int_0^\infty g_{1,\gamma}(r)r^{d+3-6\gamma-b}dr=\EXP[R^{\bar{b}}].\]
Consider the case $d\ge 2$. Let $a=\frac{d+1}{2}-\gamma>0$ and 
\[\tilde{F}_{d,\gamma}(r)=\,_2F_1\left(\frac{a}{2},\frac{a+1}{2},\frac{d}{2},\frac{4r^2}{(r^2+1)^2}\right).\]
For any $b\in\R$,
\begin{equation}\label{26241}\EXP[R^b]=\int_0^\infty C_{d,\gamma}\frac{r^{2\gamma-2+b}}{(r^2+1)^a}\tilde{F}_{d,\gamma}(r)dr.\end{equation}
Note that $\tilde{F}_{d,\gamma}(r)=\tilde{F}_{d,\gamma}(1/r)$.  Using the change of variable $r\mapsto 1/r$ for the integral at \eqref{26241}, we get
\begin{equation*}\EXP[R^b]=\int_0^\infty C_{d,\gamma}\frac{r^{2a-2\gamma-b}}{(r^2+1)^a}\tilde{F}_{d,\gamma}(r)dr=\EXP[R^{\bar{b}}].\end{equation*}
(ii) For $d\ge 1$ and $b\in\R$, by Cauchy-Schwarz inequality, we have
\[\EXP[R^b]=\frac{1}{2}(\EXP[R^b]+\EXP[R^{\bar{b}}])=\int_0^\infty g_{d,\gamma}(r)\frac{r^b+r^{\bar{b}}}{2}dr\ge \int_0^\infty g_{d,\gamma}(r)r^{\frac{b+\bar{b}}{2}}dr=\int_0^\infty g_{d,\gamma}(r)r^{b^*}=\EXP[R^{b^*}].\]
For $d=1$, one obtains the following formula for the general moment $\EXP[R^b]$, with the help of Mathematica for example:
\begin{equation}\label{moment1d}
\EXP[R^b]=\frac{\Gamma (1-\gamma)^2 \Gamma (2 \gamma-1)^2 (\Gamma (b+2 \gamma-1) \Gamma (-b-2 \gamma+2)+\Gamma (b+4 \gamma-2) \Gamma (-b-4 \gamma+3))}{\sqrt{\pi } \Gamma \left(\frac{3}{2}-2 \gamma\right) \Gamma \left(\gamma-\frac{1}{2}\right)^2 \Gamma (-b-2 \gamma+2) \Gamma (b+4 \gamma-2)}.\end{equation}
For $b=b^*$, the above formula reduces to \eqref{moment}.

For the case $d\ge 2$,
% By splitting $\int_0^\infty=\int_0^1+\int_1^\infty$ and using the change of variable $r\mapsto 1/r$ for the last integral, we get
% \[\mathbb{E}[{{R}^{b}}]=\int_{0}^{1}{\frac{{{C}_{d,\gamma }}}{{{({{r}^{2}}+1)}^{a}}}\left( {{r}^{2\gamma -2+b}}+{{r}^{2a-2\gamma -b}} \right){{{\tilde{F}}}_{d,\gamma }}(r)dr}\]
% By Cauchy-Schwarz inequality, or since the geometric mean
% cannot exceed the arithmetic mean, $r^{2\gamma-2+b}+r^{2a-2\gamma-b}\ge 2 r^{a-1}$. Thus,
% \begin{equation}\label{esym}\mathbb{E}[{{R}^{b}}]\ge \int_{0}^{1}{\frac{{{C}_{d,\gamma }}}{{{({{r}^{2}}+1)}^{a}}}2{{r}^{a-1}}{{{\tilde{F}}}_{d,\gamma }}(r)dr}=\mathbb{E}[{{R}^{b^*}}].\end{equation}
by \eqref{210241} and \autoref{angle-distr-prop}, one can rewrite $\EXP[R^{b^*}]$ in terms of the angles as
\[\EXP[R^{b^*}]={\bar{C}_{d,\gamma}}\int_{0}^{\pi}\int_0^{\pi-\phi_1}(\sin\phi_2\,\sin(\phi_1+\phi_2))^{\frac{d-1}{2}-\gamma}\sin^{2\gamma-2}\phi_1d\phi_2d\phi_1.\]
(iii) In the case $(b,\gamma)\in(0,\infty)\times[\frac{d+3}{6},\frac{d+2}{4})\bigcup\,(-\infty,0)\times (\frac{1}{2},\frac{d+3}{6}]$, note that $b$ and $\bar{b}$ have different signs. There exists $p\in(1,\infty)$ such that $\frac{b}{p}+\frac{\bar{b}}{p'}=0$ where $\frac{1}{p}+\frac{1}{p'}=1$. The proof proceeds by noticing the logarithmic convexity of the moment function $x\mapsto\EXP[R^x]$ as follows. By Holder's inequality,
\[\mathbb{E}[{{R}^{b}}]=\mathbb{E}{{[{{R}^{b}}]}^{1/p}}\mathbb{E}{{[{{R}^{{\bar{b}}}}]}^{1/p'}}\ge \mathbb{E}[{{R}^{b/p+\bar{b}/p'}}]=\mathbb{E}[{{R}^{0}}]=1.\]
\end{proof}

\subsection{Explosion and hyperexplosion of $(d,\gamma)$-DSY cascades}
In this section, we establish criteria for non-explosion, explosion, non-hyperexplosion, and hyperexplosion for $(d,\gamma)$-DSY cascades. 
%Part (b) of the theorem below is an improvement of \cite[Prop.\ 2.7]{part2_2021} and applies to general self-similar DSY cascades defined in \cite{part2_2021}. Part (c) is a new criterion for non-hyperexplosion. Part (d) is a new criterion for hyperexplosion, which are formulated for the $(d,\gamma)$-DSY but can be adapted to general self-similar DSY cascades.

\begin{thm}\label{expl-crit}
Let $d\ge 1$ and $\gamma\in(\frac{1}{2},\frac{d+2}{4})$. The  $(d,\gamma)$-DSY cascade is 
\begin{enumerate}
\item[(a)] non-explosive if $\EXP[R^{b}]<1/2$ for some $b>0$,
\item[(b)] a.s.\ explosive if $\EXP[\ln R_{\max}]> 0$, where $R_{\max}=\max\{R_1,R_2\}$, $R_1=\frac{|W_1|}{|W_\root|}$, $R_2=\frac{|W_2|}{|W_\root|}$,
\item[(c)] non-hyperexplosive if $\EXP[\ln R_{\min}]<0$, where $R_{\min}=\min\{R_1, R_2\}$,
\item[(d)] a.s.\ hyperexplosive if $\EXP[R^{-2\gamma\alpha}]<\frac{1}{2}$ for some $\alpha\in(0,1]$.
%Here $R$ is distributed identically with any $R_1$ (or any other $R_v$, $v\in\T$).
\end{enumerate}
\end{thm}

\begin{proof}
Part (a) is \cite[Prop. 2.6]{part2_2021}. For Part (b), the proof is  similar to that of
\cite[Prop. 2.7]{part2_2021}. Namely, consider the "greedy" random path $\bar{s}\in \partial\T$
 which starts at the root $\root$ and, once at vertex $v$, follows either $v1$ or $v2$, whichever has a larger ratio of $R_{v1}$ and $R_{v2}$. 
The associated time is
\[\tau_{\bar{s}}=\sum_{j=0}^{\infty} \frac{T_{\bar{s} | j}}
{|W_{\bar{s}|j}|^{2\gamma}}
=\sum_{j=0}^{\infty} \frac{T_{\bar{s} | j}}{\prod_{k=0}^j R_{\max,k}^{2\gamma}}
\ge S,\] 
where $R_{\max,j+1}=\max\{R_{[\bar{s}|j]1},R_{[\bar{s}|j]2}\}$ are i.i.d.\ 
copies of $R_{\max}$.
Suppose $\EXP[\ln(R_{\max})]=\mu> 0$. By the Law of Large Numbers, 
\[\frac{\ln(\prod_{k=0}^j R_{\max,k})}{j}
=\frac{\sum_{k=0}^j \ln(R_{\max,k})}{j}\to \mu, \qquad \mbox{a.s. as}\ j\to\infty.\]
Therefore, a.s.\ there exists a random number $J\in \mathbb{N}$ such that for $j>J$
\[
\frac{\sum_{k=0}^j \ln(R_{\max,k})}{j}>\frac{\mu}{2}
\]
or equivalently,
\[
\frac{1}{\prod_{k=0}^j R_{\max,k}^{2\gamma}}<\frac{1}{\alpha^j},
\qquad \alpha=e^{\mu\gamma}>1.
\]
Since $\{T_{\bar{s} | j}\}_{j\in\mathbb{N}}$ are i.i.d.\ exponential 
random variables with intensity 1,
we have 
\begin{equation*}
\tau_{\bar{s}}\le \sum_{j=0}^{J} \frac{T_{\bar{s} | j}}{\prod_{k=0}^j 
R_{\max,k}^{2\gamma}}+ \sum_{j=J+1}^\infty\frac{T_{\bar{s} | j}}{\alpha^j}
\le  \sum_{j=0}^{J} \frac{T_{\bar{s} | j}}{\prod_{k=0}^j R_{\max,k}^{2\gamma}}+
\sum_{j=1}^{\infty} \frac{T_{\bar{s} | j}}{\alpha^j}<\infty 
\quad a.s.
\end{equation*}
since the first term is a finite sum and the second has a finite 
expected value a.s. Thus, $S<\infty$ a.s.

Part (c) is proven similarly to Part (b) except that this time one uses the greedy ray $\underline{s}$ that follows the sibling with a smaller ratio of $R_{v1}$ and $R_{v2}$ instead of the larger one. By the Law of Large Numbers, 
$\tau_{\underline{s}}$ is bounded from below by the a.s.\ divergent series $\sum_{j>J}{T_{\underline{s}|j}}\alpha^j$, where 
$\alpha=e^{\mu/2}$ with $\mu=-\EXP[\ln R_{\min}]>0$.

For Part (d), due to the self-similarity of the DSY cascade, one can assume 
$|\xi|=1$. Let 
\[L_n=\max\limits_{|v|=n}\sum\limits_{j=0}^n \frac{T_{v|j}}{|W_{v|j}|^{2\gamma}}\] 
be the maximum length of all rays up to generation $n$. One has $L=\lim\limits_{n\to\infty}L_n$ and 
\begin{equation}\label{Leq}L_{n+1}=T_\root+
\max\{L^{(1)}_nR_1^{-2\gamma},\,L^{(2)}_nR_2^{-2\gamma}\}.
\end{equation}
%Here, $R_1=|W_1|/|\xi|$ and $R_2=|W_2|/|\xi|$ are the ratios. 
where $L_n^{(1)}$ and $L_n^{(2)}$ are independent copies of $L_n$. Equation \eqref{Leq} implies
\begin{equation}\label{Leq2}
L_{n+1}\le T_\root+L^{(1)}_nR_1^{-2\gamma}+L^{(2)}_nR_2^{-2\gamma}.
\end{equation}
In view of the inequality
\[(a+b)^\alpha\le a^\alpha+b^\alpha\ \ \ \forall\,a,b>0,\,\alpha\in(0,1],\]
it follows from \eqref{Leq2} that for any 
$\alpha\in(0,1]$,
\begin{equation}\label{Leqa}
L^\alpha_{n+1}\le T^\alpha_\root+(L^{(1)}_n)^\alpha 
R_1^{-2\gamma\alpha}+(L^{(2)}_n)^\alpha R_2^{-2\gamma\alpha}.
\end{equation}
Take the expectation of both sides:
\begin{eqnarray*}
\EXP[L^\alpha_{n+1}]&\le &\EXP[T^\alpha_\root]+\EXP[(L^{(1)}_n)^\alpha 
R_1^{-2\gamma\alpha}]+\EXP[(L^{(2)}_n)^\alpha R_2^{-2\gamma\alpha}].
\end{eqnarray*}
By the self-similarity of the cascade, the vector $(L_n^{(1)},L_n^{(2)})$ is 
independent of the vector $(R_1,R_2)$. Therefore, the above inequality can be 
written as
\begin{eqnarray}
\label{eles}\EXP[L^\alpha_{n+1}]&\le& A + 2\EXP[R_1^{-2\gamma\alpha}]\, \EXP[L^\alpha_n].
\end{eqnarray}
By the estimate \eqref{eles}, the sequence $\{\EXP[L_n^{\alpha}]\}_{n\ge 0}$ 
is bounded. By Lebesgue's Monotone Convergence Theorem, $\EXP[L^\alpha]=\lim \EXP[L_n^\alpha]<\infty$. Therefore, $L<\infty$ almost surely.
\end{proof}

%Note that in the {\em non-explosion} event $[\zeta> t]$, the recursion \eqref{eq:822191} terminates in finitely many steps. In the {\em explosion} event $[\zeta<t]$, the solution process $\mathbf{X}$ is not well-defined for $t>\zeta$. 
%For non-explosive cascades, $\mathbf{X}$ is well defined a.s. for all $t>0$. 
%In fact, in this case, $\mathbf{X}$ is a $\odot$-product, in a suitable order, of the initial data $\chi_0$ evaluated on the leaves of the cascade tree that ``reached" time $t$:
%\begin{equation}\label{Vxit}
%V(\xi,t)=\left\{v\in\mathbb{T}:\,\sum_{j=0}^{|v|-1}T_{v|j}<t\le\sum_{j=0}^{|v|}T_{v|j}\right\}
%\end{equation}

\begin{remark}\label{explicit-expectations}
The explosion, non-hypterexplosion, hyperexplosion criteria in Parts (b), (c), (d) of \autoref{expl-crit} can be straightforwardly adapted to apply to a general self-similar DSY cascade as defined in \cite{part2_2021}. Part (b) is an improvement of the explosion criterion in \cite[Thm 2.7]{part2_2021}.
\end{remark}

%\textcolor{blue}{

%}

%\textcolor{blue}{

%}
\begin{remark}
Let $\Phi_1$ and $\Phi_2$ be angles in the triangle with sides of lengths $1$, $R_1$, $R_2$, where $\Phi_1$ is opposite to $R_2$ and $\Phi_2$ is opposite to
$R_1$. By \autoref{angle-distr-prop} and the fact that $R_1=\frac{\sin\Phi_2}{\sin(\Phi_1+\Phi_2)}$, we have
\begin{align}
\label{ER-formula}\EXP[R_1^{-2\gamma\alpha}]=\bar{C}_{d,\gamma}\int\limits_0^\pi\int\limits_0^{\pi-\phi_1}(\sin\phi_1)^{2\gamma-2}(\sin\phi_2)^{2\gamma(1-\alpha)}(\sin(\phi_1+\phi_2))^{d+1-4\gamma+2\gamma\alpha}d\phi_2d\phi_1.
\end{align}
Since $\gamma\in\left(\frac{1}{2},\frac{d+2}{4}\right)$, the integral in \eqref{ER-formula} is finite if and only if $\alpha\in\left(0,1-\frac{1}{2\gamma}\right)$.
\end{remark} 

The next result, 
\autoref{exp-nonexp}, serves two purposes.  First, it identifies the optimal value of $\alpha$ in Part (d) of \autoref{expl-crit}. Second, it gives explicit parametric regions of $(d,\gamma)$ for which the criteria in \autoref{expl-crit} apply. Note that numerical implementations of $\EXP[\ln R_{\max}]$ and $\EXP[\ln R_{\min}]$ using Equations \eqref{ERmax-formula} and \eqref{ERmin-formula} indicates that the parametric regions of $(d,\gamma)$ corresponding to explosion and non-hyperexplosion are actually larger  (\autoref{dgdiagram})  than the ones identified analytically in Part (b) and (c) of \autoref{exp-nonexp} below.

%In the case of fractional Navier-Stokes equations, \autoref{expl-crit} above leads to the following result.

\begin{thm}\label{exp-nonexp}
Let $d\ge 1$ and $\gamma\in(\frac{1}{2},\frac{d+2}{4})$. The $(d,\gamma)$-DSY cascade is
\begin{itemize}
\item[(a)] non-explosive if $d\ge 2$, $\gamma\in(\frac{1}{2},\frac{d+3}{6})$ 
and $\EXP[R^{b^*}]<1/2$, where $b^*=\frac{d+3}{2}-3\gamma$. Moreover, $b^*$ 
corresponds to the best choice of $b$ in \autoref{expl-crit} (a) in the sense 
that $\EXP[R^{b^*}]=\min_{b>0}\EXP[R^{b}]$.
\item[(b)] a.s.\ explosive if $d=1,\gamma\in(\frac{1}{2},\frac{3}{4})$ 
or $d\ge 2,\gamma\in[\frac{d+3}{6},\frac{d+2}{4})$,
\item[(c)] non-hyperexplosive if $d\ge 1,\gamma\in(\frac{1}{2},\frac{d+3}{6}]$,
\item[(d)] a.s.\ hyperexplosive if $d\ge 1$, 
$\gamma\in(\frac{d+3}{6},\frac{d+2}{4})$ and $\EXP[R^{b^*}]<1/2$. Moreover, $b^*$ 
corresponds to the best choice of $\alpha$ in \autoref{expl-crit} (c) in the sense 
that $\EXP[R^{b^*}]=\min_{\alpha\in(0,1]}\EXP[R^{-2\gamma\alpha}]$.
\end{itemize}
\end{thm}

\begin{remark}
The simple ranges of $\gamma$ in Part (b) and (c) are smaller than the ranges of 
$\gamma$ determined by the general criteria in \autoref{expl-crit} 
(b), (c) (see \autoref{numeric}).
\end{remark}
\begin{proof}[Proof of \autoref{exp-nonexp}]
Part (a) follows directly from \autoref{expl-crit} (a). 
The optimality of $b^*$ follows from \autoref{moments} (ii). For Part (d), note that 
$b^*=-2\gamma\alpha$ with $\alpha =\frac{3}{2}-\frac{d+3}{4\gamma}\in(0,1]$. By 
\autoref{expl-crit} (d), the DSY cascade is a.s.\ hyperexplosive. 
 
We now prove Part (b) and Part (c). First, consider the case $d=1$. By the explosion criterion \cite[Prop. 2.7]{part2_2021}, it sufficies to show that 
$\EXP[R_{\max}^{-2\gamma}]<1$.
Let $g_{R_{\max}}$ be the pdf of $R_{\max}$, which is given in \autoref{d=1}. Because 
$H(\cdot|1)$ is a probability density symmetric with respect to $1/2$, we have
\[\mathbb{E}[R_{\max }^{-2\gamma }]=\int\limits_{0}^{\infty }{{{r}^{-2\gamma }}g_{R_{\max}}(r)dr}=\frac{\int\limits_{0}^{\infty }{{{r}^{-2\gamma }}g_{R_{\max}}(r)dr}}{\int\limits_{-\infty }^{\infty }{H(r |1) }dr}=\frac{\int\limits_{1/2}^{\infty }{{{r}^{-2}}|1-r{{|}^{2\gamma -2}}dr}}{\int\limits_{1/2}^{\infty }{|r-{{r}^{2}}{{|}^{2\gamma -2}}dr}}.\]
We want to show that
\[\int\limits_{1/2}^{\infty }{{{r}^{-2}}|1-r{{|}^{2\gamma -2}}dr}<\int\limits_{1/2}^{\infty }{|r-{{r}^{2}}{{|}^{2\gamma -2}}dr}.\]
Using the changing of variable $r\mapsto 1/r$ on both sides, we can rewrite the inequality to be shown as
\begin{equation}\label{851}\int\limits_{0}^{2}{{{r}^{2-2\gamma }}|1-r{{|}^{2\gamma -2}}dr}<\int\limits_{0}^{2}{{{r}^{2-4\gamma }}|1-r{{|}^{2\gamma -2}}dr}.\end{equation}
The integrals on both sides are then split into $\int_0^1+\int_1^2$. Using the change of variable $r\mapsto 2-r$ for the integral $\int_1^2$, one obtains an inequality equivalent to \eqref{851} as follows.
\begin{equation}
\label{852}
\int\limits_{0}^{1}{[{{r}^{a}}+{{(2-r)}^{a}}]{{(1-r)}^{-a}}dr}<\int\limits_{0}^{1}{[{{r}^{b}}+{{(2-r)}^{b}}]{{(1-r)}^{-a}}dr}
\end{equation}
where $a=2-2\gamma\in(1/2,1)$ and $b=2-4\gamma\in(-1,0)$. To show \eqref{852}, it suffices to show that
\begin{equation}
\label{853}
r^a+(2-r)^a<r^b+(2-r)^b\ \ \ \forall r\in(0,1).
\end{equation}
Because $a\in(1/2,1)$, the function $f(x)=x^a$ is concave on $(0,\infty)$. Thus,
\[\text{LHS}\eqref{853}=f(r)+f(2-r)<2f\left(\frac{r+(2-r)}{2}\right)=2f(1)=2.\]
Because $b\in(-1,0)$, the function $g(x)=x^b$ is convex on $(0,\infty)$. Thus,
\[\text{RHS}\eqref{853}=g(r)+g(2-r)>2g\left(\frac{r+(2-r)}{2}\right)=2g(1)=2.\]
Therefore, \eqref{853} has been proven, and explosion in the case $d=1$ follows.

Next, consider the case $d\ge2$. Let $\Phi_1$ and $\Phi_2$ be angles in the triangle with sides of lengths $1$, $R_1$, $R_2$, where $\Phi_1$ is opposite to $R_2$ and $\Phi_2$ is opposite to
$R_1$. 
We note that $R_1>R_2$ when $\Phi_1<\Phi_2$. By the absolute continuity 
of the joint distribution of $(\Phi_1,\Phi_2)$ with respect to the Lebesgue measure 
on $\Delta=\{(\phi_1,\phi_2):\ \phi_1,\phi_2\ge0,\ \phi_1+\phi_2\le\pi\}$, we conclude that 
\begin{equation}\label{R-max-min}
\EXP[\ln R_{\max}]>\EXP[\ln R_1]>\EXP[\ln R_{\min}].
\end{equation}
Together with \autoref{symmetry1} (c), one has $\EXP[\ln R_{\max}]>\EXP[\ln R_1]\ge 0$ for $\gamma\in[\frac{d+3}{6},\frac{d+2}{4})$, and $\EXP[\ln R_{\min}]<\EXP[\ln R_1]\le 0$ for $\gamma\in(\frac{1}{2},\frac{d+3}{6}]$. Thus, Part (b) and (c) follow.
\end{proof}

\begin{remark}
For $\gamma< (d+3)/6$, which is when $\EXP[\ln R_1]<0$, the Law of Large Number shows that the Markov process $\{\bY_{s|n}\}_{n\ge 0}$ is non-explosive for any deterministic ray $s\in\partial\T$. That is,
\[\tau_{s}=\sum_{j=0}^{\infty}\bY_{s|j}=\sum_{j=0}^{\infty} \frac{T_{{s} | j}} {|W_{{s}|j}|^{2\gamma}}=\sum_{j=0}^{\infty} \frac{T_{{s} | j}}{\prod_{k=0}^j R_{s|k}^{2\gamma}}=\infty.\]
For $\gamma= (d+3)/6$ (which includes the standard Navier-Stokes equations), one also has $\tau_s=\infty$ because $\{\ln R_{s|n}\}_{n\ge 1}$ is a simple random walk on $\R$ (\autoref{symmetry1}).
\end{remark}

\begin{lem}\label{d=1}
For $d=1$ and $\gamma\in(1/2,3/4)$, the pdf of $R_{\max}=\max\{R_1,R_2\}$ and $R_{\min}=\min\{R_1,R_2\}$ are given by
\begin{eqnarray}
\label{d=1max}{{g}_{{{R}_{\max }}}}(r)&=&\left\{ \begin{array}{*{35}{r}}
   2{{c}_{1,\gamma }}|r-{{r}^{2}}{{|}^{2\gamma -2}} & \textup{if} & r>1/2  \\
   0 & \textup{if} & r\le 1/2,  \\
\end{array} \right.\\
\label{d=1min}g_{R_{\min}}(r)&=&\left\{ \begin{array}{*{35}{r}}
   2c_{1,\gamma}(r+r^2)^{2\gamma-2} & \textup{if} & r>1/2  \\
   2c_{1,\gamma}((r+r^2)^{2\gamma-2}+|r-r^2|^{2\gamma-2}) & \textup{if} & r\le 1/2.\\
\end{array} \right.
\end{eqnarray}
\end{lem}
\begin{proof}
Assume without loss of generality that $|\xi|=1$. Then $\xi=1$ or $\xi=-1$. In view of the even parity of $h$, it suffices to consider the case $\xi=1$. Then $R_1=|W_1|$ and $R_2=|1-W_1|$. Thus, $R_{\max}\ge 1/2$. For any $r\ge 1/2$, 
\begin{eqnarray*}
\mathbb{P}({{R}_{\max }}\le r)&=&\mathbb{P}({{R}_{1}}\le {{R}_{2}}\le r)+\mathbb{P}({{R}_{2}}\le {{R}_{1}}\le r)\\
&=&\mathbb{P}({{W}_{1}}\le 1/2,1-{{W}_{1}}\le r)+\mathbb{P}({{W}_{1}}\ge 1/2,{{W}_{1}}\le r)\\
&=&2\mathbb{P}(1/2\le {{W}_{1}}\le r)\\   
&=&2\int\limits_{1/2}^{r}{H(s|1)ds}.
\end{eqnarray*}
Therefore, the probability density function of $R_{\max}$ is given by
\[{{g}_{{{R}_{\max }}}}(r)=\left\{ \begin{array}{*{35}{r}}
   2H(r|1) & \text{if} & r>1/2  \\
   0 & \text{if} & r\le 1/2  \\
\end{array} \right.\]
which is equivalent to \eqref{d=1max}. Similarly,
\begin{eqnarray*}
\P(R_{\min}\ge r)=2\P(W_1\ge 1/2,|1-W_1|\ge r)=2(\P(1/2\le W_1\le 1-r)+\P(W_1\ge 1+r)).
\end{eqnarray*}
Taking the derivative of both sides, one gets
\[g_{R_{\min}}(r)=\left\{ \begin{array}{*{35}{r}}
   2H(1+r|1) & \text{if} & r>1/2  \\
   2(H(1-r|1)+H(1+r|1)) & \text{if} & r\le 1/2\\
\end{array} \right.\]
which is equivalent to \eqref{d=1min}.
\end{proof}
\subsection{Numerical results}\label{numeric}
Let $$\tilde{\triangle}=\{(\phi_1,\phi_2)\in\triangle:\ \phi_1\le\phi_2,\ \phi_1,\phi_2\ge0,\ \phi_1+\phi_2\le\pi\}.
$$
Let $\Phi_1$ and $\Phi_2$ be angles in the triangle with sides of lengths $1$, $R_1$, $R_2$, where $\Phi_1$ is opposite to $R_2$ and $\Phi_2$ is opposite to
$R_1$. 
We note that $R_1>R_2$ for $(\Phi_1,\Phi_2)\in\tilde{\triangle}$. By \autoref{angle-distr-prop} and the fact that $R_1=\frac{\sin\Phi_2}{\sin(\Phi_1+\Phi_2)}$, we have
\begin{eqnarray}\label{ERmax-formula}
\EXP[\ln R_{\max}]
&=&2\iint\limits_{\tilde{\triangle}} \ln\left(r_1\right)f_{d,\gamma}(\phi_1,\phi_2)\,d\phi_2d\phi_1\\
\nonumber&=&2\bar{C}_{d,\gamma}\int\limits_0^{\pi/2}\int\limits_{\phi_1}^{\pi-\phi_1}  \ln\left(\frac{\sin\phi_2}{\sin(\phi_1+\phi_2)}\right) (\sin\phi_1\sin\phi_2)^{2\gamma-2}{\sin }^{d+1-4\gamma}(\phi_1+\phi_2)\, d\phi_1 d\phi_2,
\end{eqnarray}
\begin{eqnarray}\label{ERmin-formula}
\EXP[\ln R_{\min}]
&=&2\iint\limits_{\tilde{\triangle}} \ln\left(r_2\right)f_{d,\gamma}(\phi_1,\phi_2)\,d\phi_2d\phi_1\\
\nonumber&=&2\bar{C}_{d,\gamma}\int\limits_0^{\pi/2}\int\limits_{\phi_1}^{\pi-\phi_1}  \ln\left(\frac{\sin\phi_1}{\sin(\phi_1+\phi_2)}\right) (\sin\phi_1\sin\phi_2)^{2\gamma-2}{\sin }^{d+1-4\gamma}(\phi_1+\phi_2)\, d\phi_1 d\phi_2.
\end{eqnarray}
\autoref{dgdiagram} is a $d\gamma$-diagram that  shows the regions of nonexplosion, explosion, and hyperexplosion described by the criteria in \autoref{expl-crit}. More specifically, we implement numerically the identities \eqref{ERmax-formula} for the explosion criterion, \eqref{ERmin-formula} for the non-hyperexplosion criterion, and \eqref{moment} for the non-explosion and hyperexplosion criterian. Although the dimension $d$ has to be an integer, the criteria involve estimates that remain valid for any real values $d>0$. We observe the following:
\begin{enumerate}[1)]
\item For each fixed $d$, the non-explosion region of $(d,\gamma)$ corresponds to low values of $\gamma$, while hyper-explosion corresponds to high values of $\gamma$. As $\gamma$ increases, there are transitions from non-explosion to explosion (but non-hyperexplosion), and to hyperexplosion.
\item For $d\in\{3,4,5\}$ and $\gamma=1$, the DSY cascade is explosive and non-hyperexplosive. This extends the result of \cite[Prop.\ 12.8]{part2_2021} where explosion is proven for $d=3$ and $\gamma=1$. For $d\ge 12$ and $\gamma=1$, the DSY cascade is non-explosive, consistent with the theory; \cite[Prop.\ 12.3]{part1_2021}.
\item For $d\in\{1,2\}$ and $\gamma\in(\frac{1}{2},\frac{d+2}{4})$, the DSY cascade is explosive. 
\item For $d=1$, the non-hyperexplosion criterion 
\[\EXP[\ln R_{\min}]=\int_0^\infty g_{R_{\min}}(r)\ln rdr<0\]
infers that the cascade is non-hyperexplosive for $\gamma\in(0.5,\,0.6849]$. Here, $g_{R_{\min}}$ is given by \eqref{d=1min}. 
% With the aid of Mathematica, one can obtain an explicit form of the moment $f(b)=\EXP[R^b]=\int_0^\infty g_{1,\gamma}(r)r^bdr$, where $g_{1,\gamma}$ is the pdf of $R$ given by \eqref{ratiopdf1}, as follows.
% \[f(b)=\frac{\Gamma (1-\gamma)^2 \Gamma (2 \gamma-1)^2 (\Gamma (b+2 \gamma-1) \Gamma (-b-2 \gamma+2)+\Gamma (b+4 \gamma-2) \Gamma (-b-4 \gamma+3))}{\sqrt{\pi } \Gamma \left(\frac{3}{2}-2 \gamma\right) \Gamma \left(\gamma-\frac{1}{2}\right)^2 \Gamma (-b-2 \gamma+2) \Gamma (b+4 \gamma-2)}\]
% The hyperexplosion criterion \[\EXP[R^{-2\gamma\alpha}]=\int_0^\infty g_{R_{\min}}(r)r^{-2\gamma\alpha}dr<0.5\]
% for some $\alpha\in(0,1-\frac{1}{2\gamma})$ 
The hyperexplosion criterion $\EXP[R^{b^*}]<0.5$ infers that the cascade is hyperexplosive for $\gamma\in [0.7117,\,0.75)$.
% \begin{figure}[h!]
% \centering\includegraphics[scale=.6]{fb.png}
% \caption{Graph of $f(b)=\EXP[R^b]$ for $d=1,\,\gamma=0.72$. Its minimum value is approximately $0.4318$, attained at $b\approx -0.16$. The hyperexplosion criterion \autoref{expl-crit} (d) is satisfied with the choice $\alpha=-\frac{b}{2\gamma}\approx\frac{1}{9}$.}
% \label{fb}
% \end{figure}
\end{enumerate}
\begin{figure}[h!]
\centering\includegraphics[scale=.65]{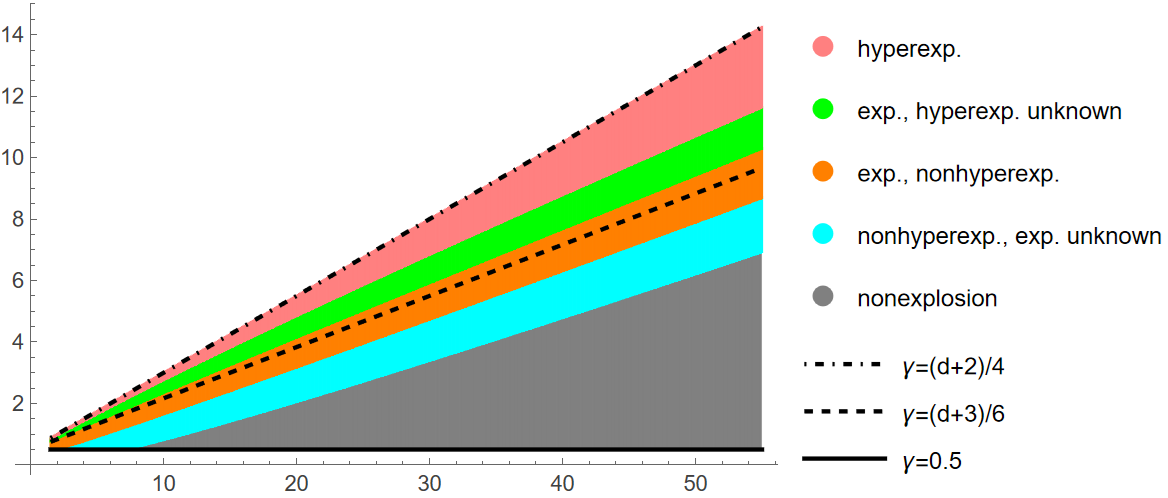}\\[3ex]
\includegraphics[scale=.65]{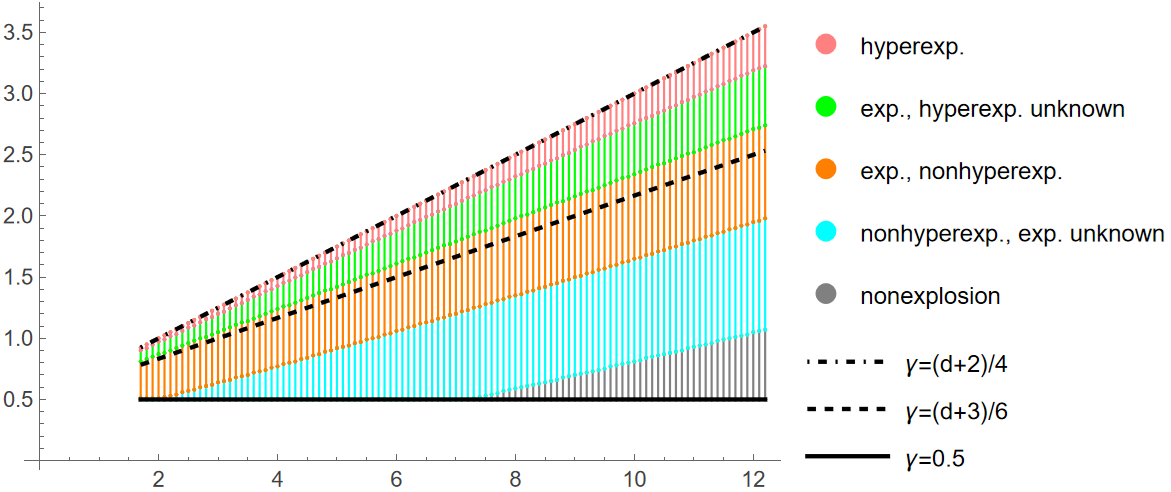}
\caption{$d\gamma$-diagram. The horizontal $d$-scale is discretized with stepsize 0.1, and the vertical $\gamma$-scale is discretized with stepsize 0.01. Two different ranges of $d$ are shown: $d\in[1.5,55]$ in the upper figure, and $d\in[1.7,12.2]$ in the lower figure.}
\label{dgdiagram} 
\end{figure}
\section{Majorizing Principle and the Fractional Montgomery-Smith equation}\label{reduction}
The quadratic form $a\odot_\xi b$ defined by \eqref{odotproduct} satisfies the following estimate:
\begin{equation}\label{odotestimate}
|a\odot_\xi b|\le\frac{1}{2}|a||b|\ \ \ \forall a,b\in\C^d,\,\xi\in\R^d\backslash\{0\}.
\end{equation}
The proof of \eqref{odotestimate} is elementary and can be found in \cite[Lem.\ 2.1]{dascaliuc2019jan}.

To establish the integrability of the solution process $\X$ defined by \eqref{eq:822191}, it is natural to use the estimate \eqref{odotestimate}. In replacing the vectorial product $a\odot_\xi b$ with the product of scalars, one is establishing a connection between \ref{nsedg} with the \emph{fractional Montgomery-Smith equation} in $\R^d$, $d\ge 1$:
\begin{equation}\label{MS}\tag{MSE}
{{\partial }_{t}}v+(-\Delta)^\gamma v = (-\Delta )^{1/2}({{v}^{2}}),\ \ \ v(\xi,0)=v_0(\xi).
\end{equation}
Here, $v(\xi,t)$ and $v_0(\xi)$ are scalar valued functions.
Montgomery-Smith \cite{smith} introduced \eqref{MS} for the case $d=3, \gamma=1$ as simple model to highlight the limitation of the perturbation method in establishing the well-posedness of the Navier-Stokes equations.  In our context, \eqref{MS} plays two roles.  First, it serves as the governing equation for the probability of non-explosion by time $t$ of the $(d,\gamma)$-DSY cascade:
\begin{equation}\label{rhoeq}
\rho(\xi,t) = \P_\xi(S> t).
\end{equation}
Second, \eqref{MS} serves as a tool to obtain a lower bound for the time $t_c$, defined by \eqref{tcritical} as the first time beyond which the solution process fails to be integrable.

%For the first, recall that in \autoref{dsy} of the $(d,\gamma)$-DSY cascade, the conditional distribution of the wave vector $W_{v1}$ given $W_v=\xi$ is $H(\eta|\xi)=h(\eta)h(\xi-\eta)/(|\xi|^{2\gamma-1}h(\xi))$ where $h$, given by \eqref{constantc}, is a positive function satisfying the fundamental relation \eqref{hkernel} $h*h(\xi) = |\xi|^{2\gamma-1} h(\xi)$.  We then have,

\begin{prop}
\label{MSviaExplosion}
Let $d\ge 1$ and $\gamma\in(\frac{1}{2},\frac{d+2}{4})$. Then the function $\rho$ given by \eqref{rhoeq} satisfies
\begin{equation}
\label{explosionequation}
\rho(\xi,t) = e^{-|\xi|^{2\gamma} t} + \int_0^t |\xi|^{2\gamma} e^{-|\xi|^{2\gamma} s} \int_{\mathbb{R}^d} \rho(\eta,t-s)\rho(\xi-\eta,t-s) H(\eta|\xi) d\eta ds
\end{equation}
where $H$ is given by \eqref{Hkernel}. Consequently, $\rho$ is continuous in $t\ge 0$.
\end{prop}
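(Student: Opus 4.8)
The identity \eqref{explosionequation} should follow by translating the recursive description of the explosion event $[S>t]$ into an integral equation, in exact parallel with the way \eqref{mildnse} arises from the solution process recursion \eqref{eq:822191}. The plan is to condition on the root clock $Y_\theta = Y$, which is exponential with intensity $|\xi|^{2\gamma}$, and on the first offspring vector $W_1$ (with $W_2 = \xi - W_1$), whose conditional density is $H(\cdot\mid\xi)$. On the event $[Y \ge t]$ the root is a $t$-leaf, no branching has occurred before time $t$, and the cascade trivially has not exploded by time $t$; this event has probability $e^{-|\xi|^{2\gamma} t}$ and contributes the first term. On the complementary event $[Y = s < t]$, the cascade restarts at the two children with wave vectors $W_1$ and $W_2$, and by the branching property in \autoref{dsy}(iii) the two sub-cascades $\{W_{1\sigma}\}$ and $\{W_{2\sigma}\}$ are conditionally independent. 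The explosion time $S$ then satisfies, on this event, $S = Y + \min\{S^{(1)}, S^{(2)}\}$ in distribution, where $S^{(1)}, S^{(2)}$ are the explosion times of the two independent sub-cascades started from $W_1$ and $W_2$ respectively; hence $[S > t]$ holds iff $S^{(1)} > t - s$ \emph{and} $S^{(2)} > t - s$. Taking probabilities and using conditional independence gives the product $\rho(W_1, t-s)\,\rho(W_2, t-s)$ inside the integral, and integrating against the exponential density of $Y$ and the density $H(\eta\mid\xi)$ of $W_1$ yields exactly \eqref{explosionequation}.

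The one point that needs care is the direction of time in the self-similar recursion: in \eqref{eq:822191} the children are evaluated at time $t - Y$, which is consistent with reading the formula $S = Y + \min\{S^{(1)},S^{(2)}\}$, so that the argument $t - s$ appearing in $\rho(W_1, t-s)\rho(W_2, t-s)$ is correct. One should also note the elementary fact that $\min\{S^{(1)}, S^{(2)}\} > \tau$ is equivalent to $S^{(1)} > \tau$ and $S^{(2)} > \tau$, which is what converts the minimum in the definition \eqref{explosiontime} of $S$ into a product of survival probabilities; this is the only place the $\min$ (as opposed to a $\max$) is used, and it is exactly why the explosion probability satisfies a \emph{quadratic} equation of Montgomery--Smith type rather than something with a different nonlinearity.

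For the continuity claim: once $\rho$ is known to satisfy \eqref{explosionequation}, continuity in $t$ follows by a routine argument. Writing $F(\xi,t)$ for the right-hand side of \eqref{explosionequation}, the first term $e^{-|\xi|^{2\gamma}t}$ is continuous in $t$, and in the integral term one substitutes $s \mapsto t - s$ to rewrite it as $\int_0^t |\xi|^{2\gamma} e^{-|\xi|^{2\gamma}(t-s)} \int_{\R^d} \rho(\eta,s)\rho(\xi-\eta,s) H(\eta\mid\xi)\, d\eta\, ds$; since $0 \le \rho \le 1$, the inner $\eta$-integral is bounded by $1$ (as $H(\cdot\mid\xi)$ is a probability density), so the integrand is dominated by $|\xi|^{2\gamma} e^{-|\xi|^{2\gamma}(t-s)} \le |\xi|^{2\gamma}$ on $s \in [0, t]$, and continuity in $t$ of the whole expression follows by dominated convergence together with continuity of the factor $e^{-|\xi|^{2\gamma}(t-s)}$ and of the upper limit of integration. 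Since $\rho = F(\rho)$, this gives continuity of $\rho$ in $t$ (for each fixed $\xi$).

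\textbf{Main obstacle.} The only genuinely nontrivial step is justifying the distributional identity $S \eqdstn Y + \min\{S^{(1)}, S^{(2)}\}$ with $S^{(1)}, S^{(2)}$ conditionally independent given $(Y, W_1, W_2)$ — i.e. checking that the recursive self-similar structure of the DSY cascade (\autoref{dsy}) really does decouple the two subtrees in the way required, and that the $\sup$ over generations $n$ and $\min$ over vertices at level $n$ in \eqref{explosiontime} restricts correctly to each subtree. This is a measure-theoretic bookkeeping matter about tree-indexed families rather than a deep analytic difficulty, and it is already implicit in the construction; making it rigorous is routine but is where essentially all the content lies. The continuity statement is then purely a soft consequence.
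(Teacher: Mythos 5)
Your proposal is correct and follows essentially the same route as the paper: the paper encodes the identity $[S>t]\cap[Y_\emptyset<t]=[Y_\emptyset<t]\cap[S^{(1)}>t-Y_\emptyset]\cap[S^{(2)}>t-Y_\emptyset]$ by observing that the indicator process $\mathbbm{1}_{S_\xi>t}$ satisfies the multiplicative recursion \eqref{pr_no_expl} and then takes expectations (legitimate since the process is bounded by $1$), which is exactly your conditioning on $(Y,W_1)$ together with $S=Y+\min\{S^{(1)},S^{(2)}\}$ and conditional independence of the subtrees. The continuity argument via the bound $\int_{\R^d}\rho(\eta,t-s)\rho(\xi-\eta,t-s)H(\eta\mid\xi)\,d\eta\le 1$ is likewise identical to the paper's.
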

\begin{proof}
The proof is similar to that of \autoref{819191}. By the inclusion $[Y_\root> t]\subset[S> t]$ and the the inheritance property of the non-explosion event, namely 
${\1_{S_\xi> t}}\1_{Y_\root\le t}=\1_{Y_\root\le t}(\1_{{{S}_{{{W}_{1}}}}> t-Y_\root}+\1_{{S_{{{W}_{2}}}}> t-Y_\root})$, the indicator $\mathcal{X}(\xi,t)=\1_{S_\xi>t}$ satisfies
\begin{equation}\label{pr_no_expl}
\mathcal{X}(\xi,t)=\left\{\begin{array}{ll}
1, & Y_\root> t,\\
\mathcal{X}^{(1)}(W_1, t-Y_\root)\mathcal{X}^{(2)}(W_2,t-Y_\root),\quad & Y_\root\le t,
\end{array}
\right.
\end{equation}
where $\mathcal{X}^{(1)}$ and $\mathcal{X}^{(2)}$ are two independent copies on $\mathcal{X}$.
% Indeed, this follows from the inclusion $[Y_\root\ge t]\subset[S> t]$ and from the inheritance property of the non-explosion event: 
% ${\IND{S> t}}\IND{Y_\root<t}=\IND{Y_\root<t}\left(\IND{{{S}_{{{W}_{1}}}}> t-Y_\root}+\IND{{S_{{{W}_{2}}}}> t-Y_\root}\right)$.
Since $|\mathcal{X}|\le1$, we can take expectation in \eqref{pr_no_expl} to obtain that $\rho=\EXP\barX$ satisfies \eqref{explosionequation}.
%\[{{\mathbb{E}}_{\xi }}[{\mathbbm{1}_{\zeta> t}}]=\underbrace{{{\mathbb{E}}_{\xi }}[{\mathbbm{1}_{\zeta> t}}{\mathbbm{1}_{{{T}_{0}}\ge t}}]}_{\{1\}}+\underbrace{{{\mathbb{E}}_{\xi }}[{\mathbbm{1}_{\zeta> t}}{\mathbbm{1}_{{{T}_{0}}< t}}]}_{\{2\}}.\]
%Since $\{T_0\ge t\}\subset\{\zeta> t\}$,
%$\{1\}={{\mathbb{E}}_{\xi }}[{\mathbbm{1}_{{{T}_{0}}\ge t}}]={{e}^{-|\xi {{|}^{2}}t}}$. 
%By conditioning on $T_0$, $W_1$, $W_2$, 
%and using the conditional independence of sub-cascades given
%$W_1, W_2$,  one can rewrite \{2\} as
%\begin{eqnarray*}
%\{2\}&=&{{\mathbb{E}}_{\xi }}[\mathbb{E}[{\mathbbm{1}_{\zeta> t}}\mathbbm{1}_{T_0<t}|{{T}_{0}},{{W}_{1}},W_2]]\\
%&=&{{\mathbb{E}}_{\xi }}[\mathbb{E}[{\mathbbm{1}_{{{\zeta}_{{{W}_{1}}}}> t-T_0}}{\mathbbm{1}_{{{\zeta}_{W_2}}> t-{{T}_{0}}}}\mathbbm{1}_{T_0<t}|{{T}_{0}},{{W}_{1}},W_2]]\\
%&=&{{\mathbb{E}}_{\xi }}[\mathbbm{1}_{T_0<t}\mathbb{E}_{W_1}[\mathbbm{1}_{{{\zeta}_{{{W}_{1}}}}> t-T_0}]
%\mathbb{E}_{W_2}[{\mathbbm{1}_{{{\zeta}_{W_2}}> t-{{T}_{0}}}}]]\\
%&=&{{\mathbb{E}}_{\xi }}[\mathbbm{1}_{T_0<t}\rho ({{W}_{1}},t-{{T}_{0}})\rho (W_2,t-{{T}_{0}})]\\
%&=&\int_{0}^{t}{|\xi {{|}^{2}}{{e}^{-|\xi {{|}^{2}}s}}\int_{{{\mathbb{R}}^{d}}}{\rho(\eta ,t-s)\rho (\xi -\eta ,t-s)H(\eta |\xi )d\eta ds}}.
%\end{eqnarray*}
%}
The continuity of $\rho$ follows immediately from \eqref{explosionequation} since $\int_{\mathbb{R}^d} \rho(\eta,t-s)\rho(\xi-\eta,t-s) H(\eta|\xi) d\eta\le 1.$
\end{proof}
The constant function $1$ is a trivial solution of \eqref{explosionequation}.  According to \autoref{exp-nonexp}, there exists values of $(d,\gamma)$ for which the $(d,\gamma)$-DSY cascade is explosive. In such a case, $\rho(\xi,t)<1$ for some $(\xi,t)$ and \eqref{explosionequation} has a nonconstant solution. This observation is formalized as a nonuniqueness result of \eqref{MS} in \autoref{nonuniquenessMS}.  %Furthermore, after multiplication by $h(\xi)$ we obtain
%\begin{equation}
%\label{hequation}
%h(\xi) = e^{-|\xi|^2 t} h(\xi) + \int_0^te^{-|\xi|^2 s} |\xi| \int_{\mathbb{R}^3} h(\eta)h(\xi-\eta) d\eta ds.
%\end{equation}
%showing that $h(\xi)$ is a steady state solution of the equation introduced in \cite{smith}, which we now review.

Note that \eqref{MS} has the same scaling property as \ref{nsedg}.
The Fourier transform of \eqref{MS} is given %relating the Fourier transform of a product of functions with the convolution of Fourier transform of these functions,  
by
$$
\frac{\partial \hat{v}}{\partial t} +  |\xi|^{2\gamma} \hat{v}= c_0 |\xi| \int_{\mathbb{R}^d} \hat{v}(\eta, t) \hat{v}(\xi-\eta,t) d\eta,~~~~\hat{v}(\xi,0) = \hat{v}_0(\xi)
$$
where $c_0=(2\pi)^{-d/2}$. The mild formulation of this equation is
% can be obtained by simply replacing the $\otimes_\xi$-dot product in \eqref{uhateq} with the regular product (of numbers):
\begin{equation}\label{FMS}
\tag{FMS} \hat{v}(\xi,t)=e^{-t|\xi|^{2\gamma}}\hat{v}_0(\xi) + c_0\int_{0}^{t}|\xi |{{{e}^{-s|\xi {{|}^{2\gamma}}}}\int_{{{\mathbb{R}}^{d}}}{\hat{v}(\eta ,t-s)\hat{v}(\xi -\eta ,t-s)d\eta ds}}.%U(\hat{v}_0)+\widetilde{B}(\hat{v},\hat{v})
\end{equation}
%where $U(w)=e^{-t|\xi|^{2\gamma}}w$ and
%\begin{equation}\label{eq:71213}
%\widetilde{B}(f,g)=c_0\int_{0}^{t}|\xi |{{{e}^{-s|\xi {{|}^{2\gamma}}}}\int_{{{\mathbb{R}}^{d}}}{f(\eta ,t-s)g(\xi -\eta ,t-s)d\eta ds}}.
%\end{equation}
%Note that $h(\xi)$ is a steady state solution of \eqref{FMS}. With a similar normalization $\psi=c_0{{v}}/{h}$ and $\psi_0=c_0{{v_0}}/{h}$, where $h$ is a standard majorizing kernel,
%and the initial data ${v}_0=h|\psi_0|/c_0$ for a $\mathbb{C}^d$-valued function $\psi_0$
%\eqref{FMS} 
%\begin{equation}\label{eq:71216}\textrm{\eqref{nFMS}}_{|\chi_0|}:~~~\chi=U(|\chi_0|)+\widetilde{\mathfrak{B}}(\chi,\chi)\end{equation}
%We now relate the explosion problem of the $(d,\gamma)$-DSY cascade with non-uniqueness of solutions of \eqref{MS}.  

\begin{cor}
\label{nonuniquenessMS}
Let $(d,\gamma)$ be such that the $(d,\gamma)$-DSY cascade is explosive and let $\rho(\xi,t)=\P_\xi(S>t)$.   
Then 
$$\hat{v}_1 (\xi,t) = c_0^{-1} h(\xi)~~~\textrm{and}~~~ \hat{v}_2(\xi,t) = {c_0^{-1} h(\xi)}\rho(\xi,t)$$ are two solutions of \eqref{FMS} with the same initial data $\hat{v}_0(\xi)=c^{-1}_0 h(\xi).$
\end{cor}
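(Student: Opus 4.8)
The plan is to verify directly that each of the two functions satisfies the mild equation \eqref{FMS} with initial datum $c_0^{-1}h(\xi)$, reducing each check to an identity already available: the convolution relation \eqref{hkernel} for the stationary profile $\hat v_1$, and the explosion equation \eqref{explosionequation} for the time-dependent one $\hat v_\rho$.

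First I would treat $\hat v_1(\xi)=c_0^{-1}h(\xi)$, which is independent of $t$. Substituting it for $\hat v$ on the right-hand side of \eqref{FMS}, the inner integral is $\int_{\R^d}\big(c_0^{-1}h(\eta)\big)\big(c_0^{-1}h(\xi-\eta)\big)\,d\eta=c_0^{-2}(h*h)(\xi)=c_0^{-2}|\xi|^{2\gamma-1}h(\xi)$ by \eqref{hkernel}; here one should record that this convolution is absolutely convergent for every $\gamma\in(\tfrac12,\tfrac{d+2}{4})$, which is precisely why \eqref{hkernel} holds, so the interchange of integrals is legitimate. The remaining time integral gives $c_0\cdot|\xi|\cdot c_0^{-2}|\xi|^{2\gamma-1}h(\xi)\int_0^te^{-s|\xi|^{2\gamma}}\,ds=c_0^{-1}h(\xi)\big(1-e^{-t|\xi|^{2\gamma}}\big)$, and adding the linear term $e^{-t|\xi|^{2\gamma}}c_0^{-1}h(\xi)$ returns $c_0^{-1}h(\xi)$. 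Hence $\hat v_1$ solves \eqref{FMS}, trivially with initial datum $c_0^{-1}h(\xi)$.

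Next I would treat $\hat v_\rho(\xi,t)=c_0^{-1}h(\xi)\rho(\xi,t)$ by multiplying the explosion equation \eqref{explosionequation} from \autoref{MSviaExplosion} through by $c_0^{-1}h(\xi)$. The decisive algebraic observation is that, by the definition \eqref{Hkernel} of $H$,
\[
c_0^{-1}h(\xi)\,|\xi|^{2\gamma}\,H(\eta|\xi)=c_0^{-1}|\xi|\,h(\eta)h(\xi-\eta)=c_0\,\big(c_0^{-1}h(\eta)\big)\big(c_0^{-1}h(\xi-\eta)\big)\,|\xi|,
\]
so that the nonlinear term of the multiplied \eqref{explosionequation} becomes exactly $c_0\int_0^t|\xi|e^{-s|\xi|^{2\gamma}}\int_{\R^d}\hat v_\rho(\eta,t-s)\hat v_\rho(\xi-\eta,t-s)\,d\eta\,ds$, while the linear term becomes $e^{-t|\xi|^{2\gamma}}c_0^{-1}h(\xi)$. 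Since $\rho(\xi,0)=\mathbb{P}_\xi(S>0)=1$ (because $S\ge Y_\theta>0$ a.s.), we have $\hat v_\rho(\xi,0)=c_0^{-1}h(\xi)$, so $\hat v_\rho$ solves \eqref{FMS} with the same initial datum. Finally, the two solutions are distinct: the hypothesis that the DSY cascade is explosive says $\rho(\xi,t)<1$ on a set of positive measure, whence $\hat v_\rho\not\equiv\hat v_1$. I do not expect a genuine obstacle; the entire argument is bookkeeping built on \eqref{hkernel}, \eqref{Hkernel} and \eqref{explosionequation}, and the only point deserving a line of justification is the absolute convergence of the convolution $\int_{\R^d}h(\eta)h(\xi-\eta)\,d\eta$, which holds on the full admissible range of $\gamma$.
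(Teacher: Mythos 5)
Your proof is correct and follows essentially the same route as the paper: multiply the no-explosion equation \eqref{explosionequation} by $c_0^{-1}h(\xi)$ and use the definition \eqref{Hkernel} of $H$ to recover \eqref{FMS}, with the stationary solution $c_0^{-1}h$ handled via the convolution identity \eqref{hkernel} (the paper phrases this as ``the constant $1$ solves \eqref{explosionequation}''). You additionally spell out the matching of initial data via $\rho(\xi,0)=1$ and the distinctness of the two solutions from explosivity, which the paper leaves implicit; these are welcome but not a different argument.
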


\begin{proof}
We give a proof for the case of $\hat{v}_2$ as the case of $\hat{v}_1$ follows similarly.  Rewriting \eqref{explosionequation} in terms of $\hat{v}_2$, we get
$$
\hat{v}_2 (\xi,t)\frac{c_0} { h(\xi )}= 
e^{-t|\xi|^{2\gamma}} + c_0^2
\int_0^t |\xi|^{2\gamma} e^{-|\xi|^{2\gamma} s} \int_{\mathbb{R}^d}\frac{ \hat{v}_2(\eta,t-s)}{ h(\eta)}  \frac{\hat{v}_2(\xi-\eta,t-s)}{h(\xi-\eta)} H(\eta|\xi) d\eta ds.
$$
Recalling the definition of $H(\eta|\xi)$ given in \eqref{Hkernel}, we obtain
$$
\hat{v}_2 (\xi,t)\frac{c_0} { h(\xi )}= 
e^{-t|\xi|^{2\gamma}} + \frac{c_0^2}{h(\xi)}
\int_0^t |\xi| e^{-|\xi|^{2\gamma} s} \int_{\mathbb{R}^d}{ \hat{v}_2(\eta,t-s)}  {\hat{v}_2(\xi-\eta,t-s)} d\eta ds
$$
which is equivalent to \eqref{FMS} with initial data $h(\xi)/c_0$. 
%As noted before the constant $1$ is a solution of \eqref{explosionequation} since the kernel $h(\xi)$ defined in \eqref{constantc} satisfies \eqref{hkernel}.
\end{proof}
It is clear that if $\hat{v}$ satisfies \eqref{FMS} then $\psi=c_0\hat{v}/h$ satisfies a normalized version of \eqref{FMS}:
\begin{equation}\label{nFMS}\tag{nFMS}\psi (\xi ,t)={{e}^{-t|\xi {{|}^{2\gamma}}}}{{\psi }_{0}}(\xi)+\int_{0}^{t}{{{e}^{-s|\xi {{|}^{2\gamma}}}}|\xi {{|}^{2\gamma}}\int_{{{\mathbb{R}}^{d}}}{\psi (\eta ,t-s)\psi (\xi -\eta ,t-s)H(\eta |\xi )d\eta ds }}.\end{equation}
\autoref{nonuniquenessMS} affirms that $\psi_1(\xi,t)\equiv 1$ and $\psi_2(\xi,t)=\rho(\xi,t)$ are two solutions to \eqref{nFMS} with initial condition $\psi_0(\xi)\equiv 1$. They are distinct if and only if the $(d,\gamma)$-DSY cascade is explosive.

\eqref{nFMS} can be viewed as a scalar analog of \eqref{mildnse} and is a mean flow of a solution process $\bar{\X}$ defined by
\begin{equation}\label{eq:819191}\barX(\xi,t)=\left\{ \begin{array}{*{35}{l}}
   \psi_0(\xi) & \text{if} & Y_\root> t,  \\
   \barX^{(1)}(W_1, t-Y_\root)\barX^{(2)}(W_2, t-Y_\root) & \text{if} & Y_\root\le t.  \\
\end{array} \right.\end{equation}
%\begin{itemize}
%\item Without thinning:
%\begin{equation}\label{eq:819191}\mathbf{Y}(\xi,t)=\left\{ \begin{array}{*{35}{l}}
%   \psi_0(\xi) & \text{if} & {{T}_{0}}\ge t,  \\
%   {\mathbf{Y}}^{(1)}(W_1, t-{{T}_{0}}){\mathbf{Y}}^{(2)}(W_2, t-{{T}_{0}}) & \text{if} & {{T}_{0}}< t.  \\
%\end{array} \right.\end{equation}
%\item With thinning: 
%\begin{equation}\label{eq:522211}\bar{\mathbf{Y}}(\xi,t)=\left\{ \begin{array}{*{35}{l}}
%   \psi_0(\xi) & \text{if} & {{T}_{0}}\ge t,  \\
%   0 & \text{if} & {{T}_{0}}< t,\,K_0=0,  \\
%   2\bar{\mathbf{Y}}^{(1)}(W_1, t-{{T}_{0}})\bar{\mathbf{Y}}^{(2)}(W_2, t-{{T}_{0}}) & \text{if} & {{T}_{0}}< t,\,K_0=1.  \\
%\end{array} \right.\end{equation}
%\end{itemize}
By replacing $\barX$ with $\barX\1_{[S>t]}$, we can assume that the recursion \eqref{eq:819191} eventually stops and thus, $\barX$ is well-defined. One can notice the similarity between \eqref{eq:819191} and \eqref{eq:822191} and the fact that \eqref{nFMS} and \eqref{mildnse} have the same underlying $(d,\gamma)$-DSY cascade. The inequality $|a\odot_\xi b|\le \frac{1}{2}|a||b|$ makes it possible to compare the solution to \eqref{mildnse} with initial condition $\chi_0$ with the solution to \eqref{nFMS} with initial condition $\psi_0=|\chi_0|/2$. This is known as a \emph{majorizing principle} (see \cite{lejan, dascaliuc2019jan} for the case $\gamma=1$).

\begin{prop}\label{69211}
Let $\psi_0:\mathbb{R}^d\to[0,\infty]$ be a measurable function. We have the following statements.
\begin{enumerate}[(a)]
\item The function $\psi(\xi,t)=\mathbb{E}_\xi\barX(\xi,t)\in [0,\infty]$ satisfies {\eqref{nFMS}} with initial condition $\psi(\cdot,0)=\psi_0$ \textup{(}with the convention $0\cdot\infty=0$\textup{)}.
\item For each $\xi\neq 0$, $e^{t|\xi|^2}\psi(\xi,t)$ is nondecreasing in $t$.
%\item %If $|\chi_0(\xi)|=1$ for all $\xi$ then 
%{\textcolor{red}{$\rho(\xi,t)=\mathbb{E}_\xi(\mathbbm{1}_{\zeta> t})$ is continuous in $t$. Consequently, $[\zeta=t]$ is a null event for each $t$.}}
\item \textup{(Majorizing principle)}
%Let $\chi_{\text{nFNS}}=\mathbb{E}_\xi[\mathbf{X}_{\text{nFNS}}\mathbbm{1}_{\zeta>t}]$. 
Let $\chi_0:\mathbb{R}^d\to\mathbb{C}^d$ be a measurable function such that $|\chi_0|\le 2\psi_0$ and $\X$ be given by \eqref{eq:822191}. Suppose that for some $T\in(0,\infty]$, $\psi<\infty$ a.e.\ on $\mathbb{R}^d\times(0,T)$. Then there exists a subset $D\subset \mathbb{R}^d$ with full measure such that both $\psi(\xi,t)=\mathbb{E}_\xi\barX$ and $\mathbb{E}_\xi[|{\mathbf{X}}|\mathbbm{1}_{S>t}]$ are finite on $D_T=D\times[0,T)$. 
Moreover, $|\chi|\le 2\psi$ on $D_T$, where $\chi(\xi,t)=\EXP_\xi[\bX\1_{S>t}]$.
%\item \textup{(Comparison principle of nFMS)} 
%Let $\phi_0,\gamma_0:\mathbb{R}^d\to[0,\infty]$ be measurable functions, and let $\phi$ and $\gamma$ be the minimal cascade solutions to {\eqref{nFMS}}$_{\phi_0}$ and {\eqref{nFMS}}$_{\gamma_0}$, respectively. If $\phi_0\le\gamma_0$ on $\mathbb{R}^d$ then $\phi\le\gamma$ on $\mathbb{R}^d\times(0,\infty)$.
\end{enumerate}
\end{prop}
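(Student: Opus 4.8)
The plan is to prove the three parts in order, since (c) rests on both (a) and (b); parts (a) and (b) parallel the computation already carried out for $\rho$ in \autoref{MSviaExplosion}. For (a) I will take expectations directly in the recursion \eqref{eq:819191}. The key simplification is that $\psi_0=|\chi_0|/2\ge 0$ and every branching there is an \emph{ordinary} product of nonnegative quantities, so $\barX\ge 0$ and no integrability hypothesis is needed: Tonelli applies at every step, and all identities hold in $[0,\infty]$ with the convention $0\cdot\infty=0$. Conditioning on $\sigma(Y_\emptyset,W_1)$ (recall $W_2=\xi-W_1$) and using the conditional independence and self-similarity built into \autoref{dsy}, one gets $\EXP_\xi[\barX^{(1)}\barX^{(2)}\,|\,Y_\emptyset,W_1]=\psi(W_1,t-Y_\emptyset)\,\psi(W_2,t-Y_\emptyset)$; integrating out $Y_\emptyset$ (exponential with rate $|\xi|^{2\gamma}$, independent of $W_1\sim H(\cdot|\xi)$) and renaming $\eta=W_1$ reproduces \eqref{nFMS}$_{\psi_0}$ verbatim.

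\textbf{Part (b).} I will substitute $s\mapsto t-s$ in the Duhamel integral of \eqref{nFMS}$_{\psi_0}$, which rearranges the equation into
\[
e^{|\xi|^{2\gamma}t}\,\psi(\xi,t)=\psi_0(\xi)+\int_0^t |\xi|^{2\gamma}e^{|\xi|^{2\gamma}s}\left(\int_{\Rd}\psi(\eta,s)\,\psi(\xi-\eta,s)\,H(\eta|\xi)\,d\eta\right)ds .
\]
The integrand is nonnegative, so the right-hand side --- hence $t\mapsto e^{|\xi|^{2\gamma}t}\psi(\xi,t)$ --- is nondecreasing; in particular $\psi(\xi,\cdot)$ is either finite throughout $[0,T)$ or becomes $+\infty$ from some time on, which is the form of (b) actually needed in (c).

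\textbf{Part (c).} This splits into a measure-theoretic step and a pathwise-majorization step. First, the hypothesis says $\{(\xi,t):\psi(\xi,t)=\infty\}$ is Lebesgue-null in $\Rd\times(0,T)$, so by Fubini its $\xi$-sections are null for a.e.\ $\xi$; by the monotonicity from (b) such a null section must in fact be empty, and since $\psi(\xi,0)=|\chi_0(\xi)|/2<\infty$ for every $\xi$, one concludes that $\psi(\xi,\cdot)<\infty$ on $[0,T)$ for a.e.\ $\xi$. I will take $D$ to be this set, written as $\{\xi:\psi(\xi,q)<\infty\ \forall q\in\mathbb{Q}\cap(0,T)\}$ so that its measurability is transparent and monotonicity again upgrades ``all rationals'' to ``all $t$''. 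Second, on $[S_\xi>t]$ the tree $V(\xi,t)$ is finite and the recursion \eqref{eq:822191} terminates, expressing $\X(\xi,t)$ as an iterated $\odot_\xi$-bracketing of the leaf data $\{\chi_0(W_\ell)\}_{\ell\in\partial V(\xi,t)}$, while $\barX(\xi,t)$ is the same product of $\{|\chi_0(W_\ell)|/2\}_{\ell\in\partial V(\xi,t)}$. Inducting on the height of $V(\xi,t)$ --- the inherited nature of the non-explosion event (as in the proof of \autoref{MSviaExplosion}) guarantees both subtrees are finite on $[S_\xi>t]\cap[Y_\emptyset<t]$, so the induction closes --- and applying $|a\odot_\xi b|\le\tfrac12|a||b|$ from \eqref{odotestimate} at each of the $|\partial V(\xi,t)|-1$ branchings yields the pathwise bound $|\X(\xi,t)|\,\1_{[S_\xi>t]}\le 2\,\barX(\xi,t)$. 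Taking $\EXP_\xi$ gives $\EXP_\xi[|\X|\,\1_{[S_\xi>t]}]\le 2\psi(\xi,t)$, which is finite on $D_T=D\times[0,T)$; hence $\chi(\xi,t)=\EXP_\xi[\X\,\1_{[S_\xi>t]}]$ is a well-defined, absolutely convergent expectation on $D_T$, and $|\chi|\le\EXP_\xi[|\X|\,\1_{[S_\xi>t]}]\le 2\psi$ there.

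\textbf{Main obstacle.} The delicate point is the passage from the a.e.-in-$(\xi,t)$ hypothesis to a conclusion valid for \emph{every} $t\in[0,T)$ on one full-measure set of $\xi$: without part (b) this would be false, and with it one must still verify that $D$ is genuinely measurable and that the factor $2$ in the pathwise bound survives the $\1_{[S_\xi>t]}$ truncation carried by both $\X$ and $\barX$. Everything else --- the conditioning and Tonelli steps in (a), the rearrangement in (b), and the leaf-count bookkeeping ($N$ leaves, $N-1$ $\odot_\xi$-products each costing $\tfrac12$, versus $\barX=2^{-N}\prod_\ell|\chi_0(W_\ell)|$, so the quotient is $2$) --- is routine.
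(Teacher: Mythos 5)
Your proposal is correct and follows essentially the same route as the paper: expectations via Tonelli in the nonnegative recursion for (a), multiplication by $e^{t|\xi|^{2\gamma}}$ for (b), and Fubini plus monotonicity plus the pathwise bound $|\X|\1_{[S>t]}\le 2\barX$ (from $|a\odot_\xi b|\le\tfrac12|a||b|$ applied at each branching) for (c). The only cosmetic difference is that in (c) you apply Fubini to the $t$-sections for a.e.\ $\xi$ and use monotonicity to force each null section to be empty, whereas the paper sections by $t$ first and takes a countable union of the increasing null sets $A_t$; both yield the same full-measure set $D$, and your explicit leaf count confirming the factor $2$ matches the paper's bookkeeping.
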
  
\begin{proof}
The proof of Part (a) follows the same lines as that of \autoref{819191} and will be skipped. To show Part (b), we multiply both sides of \eqref{nFMS}$_{\psi_0}$ by $e^{t|\xi|^{2\gamma}}$ and get 
\begin{equation}\label{529211}e^{t|\xi|^{2\gamma}}\psi-\psi_0(\xi)=\int_0^t|\xi|^{2\gamma}e^{s|\xi|^{2\gamma}}f(\xi,s)ds\end{equation} where $f(\xi,s)=\int_{\R^d}\psi(\eta,s)\psi(\xi-\eta,s)H(\eta|\xi)d\eta\ge 0$. The monotonicity of $e^{t|\xi|^{2\gamma}}\psi$ immediately follows. 
%{\textcolor{red}{%To show Part (c) and (d), observe that on the event $[\zeta> t]$, $\mathbf{Y}$ is a finite product of values of $|\chi_0|$:
%let $V(\xi,t)=\{v\in\mathbb{T}:\,\sum_{j=0}^{|v|-1}T_{v|j}<t\le\sum_{j=0}^{|v|}T_{v|j}\}$ be the set of vertices at which the cascade crosses the horizon $t$. Given a realization $\omega\in\Omega$ of the cascade, the recursion \eqref{eq:819191} has a closed form
%\begin{equation}\label{eq:49191}\mathbf{Y}(\xi ,t,\omega)=\prod\limits_{{v}\in V(\xi ,t)}|{{{\chi }_{0}}({{W}_{{v}}})}|\end{equation}
%where $V(\xi,t)$ is the random set given by \eqref{setV}. For Part (c), consider $|\chi_0(\xi)|=1$ for all $\xi$. In this case, $\mathbf{Y}=1$ and $\psi(\xi,t)=\mathbb{P}_\xi(\zeta> t)\in[0,1]$. %One has $f(\xi,s)\in[0,1]$. Then \eqref{529211} implies that $\psi$ is continuous in $t$. 
%The continuity of $\rho(\xi,\cdot)$ stated in (c) is a consequence of \eqref{529211} (with $|\chi_0|=1$) and the fact that in this case 
%$f(\xi,s)=\int\rho(\eta,s)\rho(\xi-\eta,s)H(\eta|\xi)d\eta\ge 0\in[0,1]$.}}

For Part (c), we first show the existence of a subset $D\subset\mathbb{R}^d$ with full measure such that $\psi$ is finite on $D_T$. Let $A=\{(\xi,t)\in\mathbb{R}^d\times(0,T): \psi(\xi,t)=\infty\}$. For each $t\in[0,T)$, let $A_t=\{\xi\in\mathbb{R}^d: \psi(\xi,t)=\infty\}$. By Fubini's theorem, $0=m(A)=\int_0^T m(A_t)dt$, where $m$ denotes the Lebesgue measure. This implies $A_t$ is null set for a.e.\ $t\in[0,T)$. By Part (b), $A_t\subset A_s$ whenever $t<s$. 
Consequently, $m(A_t) = 0$ for all $t\in[0,T)$. The monotonicity of $(A_t)_{t\in[0,T)}$ implies that the union $\cup_{t\in(0,T)}A_t$ is actually a countable union of null sets, which is also a null set. One can choose $D=\mathbb{R}^d\backslash\cup_{t\in[0,T)}A_t$. %Then $\psi$ is finite everywhere on $D\times(0,T)$. 
Next, it can be observed from \eqref{eq:822191} that on the nonexplosion event $[S> t]$, $\bX$ is an $\odot$-product of $\chi_0(W_v)$, $v\in\partial V(\xi,t)$, in a suitable order, where $\partial V(\xi,t)$ is given by \eqref{tleaves}.  By the inequality $|a\odot_\xi b|\le\frac{1}{2}|a||b|$,  one has 
\begin{align*}|\X(\xi,t)|&\le\frac{1}{2^{|\partial V(\xi,t)|-1}}\prod_{v\in\partial V(\xi,t)}|\chi_0(W_v)|\1_{S>t}\\
&\le\frac{1}{2^{|\partial V(\xi,t)|-1}}\prod_{v\in\partial V(\xi,t)}2\psi_0(W_v)\1_{S>t}=2\prod_{v\in\partial V(\xi,t)}\psi_0(W_v)\1_{S>t}=2\bar{\X}(\xi,t).
\end{align*}
Therefore,
\[\EXP_\xi[|\bX(\xi,t)|\1_{S>t}]\le 2\mathbb{E}_\xi[\barX(\xi,t)]<\infty\ \ \ \forall (\xi,t)\in D\times[0,T).\]
\end{proof}
\begin{remark}
\label{tclowerbound}
According to \autoref{69211}(c), if \eqref{FMS} has a mild solution on $(0,\tau)$ then $\mathbb{E}_\xi[|\bX|\mathbbm{1}_{S>t}]<\infty$ a.e.\ on $\R^d\times(0,\tau)$ which implies the lower bound on the critical time $t_c\ge \tau$. Part (i) of the following proposition (due to Ferreira and Villamizar-Roa \cite{ferreira}) asserts the local-in-time existence and uniqueness of mild solutions to \eqref{FMS} for initial data $v_0$ in the pseudomeasure-type space $PM^a$.
\end{remark}
For any $a>0$, let $PM^a$ be the space of pseudo-measures defined by \eqref{pma} and whose norm is 
\begin{equation}
\|f\|_{PM^a}=\||\xi|^a\hat{f}\|_{L^\infty}.
\end{equation}
Denote by $BC((0,T);X)$ the space
of bounded and continuous continuous functions from $(0,T)$ to a Banach space $X$.
\begin{prop}\label{mspmsol}
For $d\ge 1$, $\gamma\in(\frac{1}{2},\frac{d+2}{4})$, let $a_*=d+1-2\gamma>0$ and $\kappa=\kappa_{d,\gamma}=\frac{c_{d,\gamma}}{c_0}$, where $c_0=(2\pi)^{-d/2}$ and $c_{d,\gamma}$ is given by \eqref{constantc}. We have the following statements. 
\begin{enumerate}[(i)]
\item If $v_0\in PM^a$ for $a\in(a_*,d)$ then there exists a number $\tau\in(0,\infty)$ such that \eqref{FMS} has a unique solution $v\in BC((0,\tau),PM^a)$.
\item If $v_0\in PM^{a_*}$ and $\|v_0\|_{PM^{a_*}}\le \kappa$, then \eqref{FMS} has a global solution $v\in L^\infty((0,\infty),PM^{a_*})$ with $\|v\|_{L^\infty_t PM^{a_*}}\le \kappa$.
\item If $v_0\in PM^{a_*}$ and $\|v_0\|_{PM^{a_*}}< \kappa$, \eqref{FMS} has a unique global solution $v$ in the ball $\{w:\|w\|<\kappa\}$ of $L^\infty((0,\infty),PM^{a_*})$. Moreover, 
\begin{equation}\label{decay}|\hat{v}(\xi,t)|\le ce^{-\delta|\xi|t^{1/(2\gamma)}}|\xi|^{-a_*}\ \ \ \forall(\xi,t)\end{equation}
where the constants $c,\delta>0$ depend on $\gamma$ and $\|v_0\|_{PM^{a_*}}$.
\item For any $\epsilon>0$, let $v_{0,\epsilon}$ be the function whose Fourier transform is $\hat{v}_{0,\epsilon}(\xi)=(1+\epsilon)\kappa |\xi|^{-a_*}$, which implies $\|v_{0,\epsilon}\|_{PM^{a_*}}=(1+\epsilon)\kappa$. If the $(d,\gamma)$-DSY cascade is non-explosive then \eqref{FMS} has no solution in $L^\infty((0,\tau),PM^{a_*})$ for any $\tau>0$.
\end{enumerate}
\end{prop}
\begin{remark}
In Part (iv), one can express $v_{0,\epsilon}$ in the physical space using \eqref{fourier} as 
\begin{equation}\label{ctilde}
v_{0,\epsilon}(x)=(1+\epsilon)\tilde{c}_{d,\gamma}|x|^{1-2\gamma},\ \ \ \ \ \ \tilde{c}_{d,\gamma}=2^{2\gamma-1}\frac{\Gamma(2\gamma-1)\Gamma(\frac{d+1}{2}-\gamma)}{\Gamma(\frac{d+2-4\gamma}{2})\Gamma(\gamma-\frac{1}{2})}.
\end{equation}
\end{remark}
\begin{proof}[Proof of \autoref{mspmsol}]
For Part (i), the local existence and uniqueness of solutions is guaranteed by applying \cite[Theorem 1.2, 1.3]{ferreira} for the special case $a=b$ and $\theta=1$. Strictly speaking, \cite{ferreira} deals with the fractional Navier-Stokes equations rather than the fractional Montgomery-Smith equation. However, their proof does not use any special structure of the $\odot$-product other than the fact that $|w_1\odot_\xi w_1|\le |w_1||w_2|$ for all $w_1,w_2\in\R^d$. Thus, the results also hold for the fractional Montgomery-Smith equation.

For Part (ii), observe that 
\[\|\psi_0\|_{L^\infty}=\frac{c_0}{c_{d,\gamma}}\|v_0\|_{PM^{a_*}}=\frac{1}{\kappa}\|v_0\|_{PM^{a_*}}\le 1\]
where $\psi_0=c_0\hat{v}_0/h$. Let $\barX$  be the minimal solution process for \eqref{nFMS}. On the event $[S>t]$, one has 
\begin{equation}\label{Xclosed}
\barX(\xi ,t) = \prod\nolimits_{v \in \partial V(\xi ,t)} {{\psi _0}({W_v})}\end{equation} 
and therefore, $|\barX(\xi,t)|\le 1$ for all $\xi,t$. The function $\psi(\xi,t)=\EXP_\xi\barX(\xi,t)$ is a global solution to \eqref{nFMS}. Thus, the inverse Fourier transform $v=\frac{1}{c_0}\mathcal{F}^{-1}\{{\psi h}\}$ is a global solution to \eqref{FMS} and satisfies $\|v\|_{L^\infty_t PM^{a_*}}=\kappa\|\psi\|_{L^\infty}\le \kappa$.

For Part (iii), let $v_0\in PM^{a_*}$ with $\|v_0\|_{PM^{a_*}}<\kappa$, and let $v$ be the global solution defined in Part (ii). Then $\|\psi_0\|_{L^\infty}=\frac{1}{\kappa}\|v_0\|_{PM^{a_*}}<1$ and 
\[|\barX(\xi ,t)|= \prod\limits_{v \in \partial V(\xi ,t)} |{{\psi _0}({W_v})}|\1_{S>t}\le \|\psi_0\|_{L^\infty}^{|\partial V(\xi,t)|}\le \|\psi_0\|_{L^\infty}<1.\]
Thus, $$\|v\|_{L^\infty_t PM^{a_*}}=\kappa\|\psi\|_{L^\infty}=\kappa\|\EXP_\xi\barX(\xi,t)\|_{L^\infty}<\kappa.$$
Let $\tilde{v}$ be another solution to \eqref{FMS} such that $\|\tilde{v}\|_{L^\infty_t PM^{a_*}}<\kappa$.  To show that $\tilde{v}=v$, we use "ground state" argument as in \cite{lejan, alphariccati,dascaliuc2019jan}. Let $\tilde{\psi}=c_0\mathcal{F}\{\tilde{v}\}/h$, which is a solution to \eqref{nFMS} satisfying $\|\tilde{\psi}\|_{L^\infty}<1$.
Consider a sequence of stochastic processes $\tilde{\bX}_n$ defined inductively by $\tilde{\bX}_0(\xi,t)=\tilde{\psi}(\xi,t)$ and for $n\ge 0$,
\begin{equation}\label{tilX}\tilde{\bX}_{n+1}(\xi,t)=\left\{ \begin{array}{*{35}{l}}
   \psi_0(\xi) & \text{if} & Y_\root> t,  \\
   \tilde{\bX}_n^{(1)}(W_1, t-Y_\root)\tilde{\bX}_n^{(2)}(W_2, t-Y_\root) & \text{if} & Y_\root\le t.  \\
\end{array} \right.\end{equation}
By conditioning on $T_\root$, one has
\[\psi^{(n+1)} (\xi ,t)={{e}^{-t|\xi {{|}^{2\gamma}}}}{{\psi }_{0}}(\xi)+\int_{0}^{t}{{{e}^{-s|\xi {{|}^{2\gamma}}}}|\xi {{|}^{2\gamma}}\int_{{{\mathbb{R}}^{d}}}{\psi^{(n)} (\eta ,t-s)\psi^{(n)} (\xi -\eta ,t-s)H(\eta |\xi )d\eta ds }}\]
where ${\psi}^{(n)}(\xi,t)=\EXP_{\xi}\tilde{\bX}_n(\xi,t)$. Since $\psi^{(0)}=\tilde{\psi}$ is a solution to \eqref{nFMS}, one can prove by induction that $\psi^{(n)}=\tilde{\psi}$ for all $n$.  On the non-explosion event $[S>t]$,  $\tilde{\bX}_n=\mathbf{X}$ for sufficiently large random $n$, while on the explosion event $[S\le t]$, 
\begin{equation}\label{Xnexplicit}
\tilde{\bX}_n(\xi,t)\1_{S\le t}=\left(\prod_{v\in\partial V(\xi,t)}\psi_0(W_v)\right)\prod_{v\not\in\partial V(\xi,t),\,|v|=n}\tilde{\psi}\left(W_v,t-\sum_{j=0}^{|v|-1}Y_{v|j}\right).
\end{equation}
Let $\alpha=\max\{\|\psi_0\|_{L^\infty},\|\tilde{\psi}\|_{L^\infty}\}<1$. Then \eqref{Xnexplicit} implies $|\tilde{\bX}_n(\xi,t)|\1_{[S<t]}\le\alpha^{2^n}$, which converges to 0. Therefore, $\lim \tilde{\bX}_n(\xi,t)=\bX(\xi,t)\1_{S>t}$.
By Lebesgue's Dominated Convergence Theorem,
\[
\tilde{\psi}=\lim\limits_{n\to\infty}\mathbb{E}\tilde{\bX}_n=\mathbb{E}_\xi[{\mathbf{X}\1_{[S>t]}}]=\psi.
\]
Next, we show \eqref{decay}. One can infer from the representation \eqref{Xclosed} that $|\psi(\xi,t)|\le\sum_{n=1}^\infty\alpha^np_n(\xi,t)$ where
\begin{equation}\label{probability}
p_n(\xi,t)=\mathbb{P}_{\xi}(S>t, |\partial V(\xi,t)|=n).\end{equation}
By conditioning on the first time of branching, one gets
\begin{equation}\label{probrecursion}
{{p}_{n}}(\xi ,t)=\int_{0}^{t}{|\xi {{|}^{2\gamma}}{{e}^{-s|\xi {{|}^{2\gamma}}}}\int_{{{\mathbb{R}}^{d}}}{\sum\limits_{k=1}^{n-1}{{{p}_{k}}(\eta ,t-s){{p}_{n-k}}(\xi -\eta ,t-s)H(\eta |\xi )d\eta ds}}}.\end{equation}
Let $\beta=\frac{1}{2\gamma}<1$ and $b=b(\gamma)=(2\beta)^{\beta/(1-\beta)}(1-2\beta)$. 
We claim that
\begin{equation}\label{lem_beta}
x^\beta\le\frac{x}{2}+b\ \ \ \forall x\ge 0.
\end{equation}
Let $f(x)=x/2-x^\beta$. Then $f'(x)=1/2-\beta x^{\beta-1}$. The minimum of $f$ is $-b$ and is attained at $x=(2\beta)^{1/(1-\beta)}$. Thus, the claim is proved. 
%Indeed, this is proved by applying the Young's inequality $ab\le a^p/p+b^q/q$ for all $a,b\ge 0$ and $p,q>1$, $1/p+1/q=1$ for $a=x^\beta$, $b=1$, and $p=$.

Note that $p_1(\xi,t)=e^{-|\xi|^{2\gamma} t}$. Applying \eqref{lem_beta} for $x=|\xi|^{2\gamma}t$, we get $p_1(\xi,t)\le e^{b-|\xi|t^\beta}$. We claim that 
\begin{equation}\label{exp_ineq}
{{e}^{-|\xi {{|}^{2\gamma}}s}}{{e}^{-|\eta |(t-s)^\beta}}{{e}^{-|\xi -\eta |(t-s)^\beta}}\le {{e}^b}{{e}^{-|\xi {{|}^{2\gamma}}s/2}}{{e}^{-|\xi |t^\beta}}\ \ \ \forall\xi,\eta\in\R^d,\,0<s<t.\end{equation}
This inequality is equivalent to
\begin{equation}\label{vect_ineq}
|\xi|t^\beta-(|\eta|+|\xi-\eta|)(t-s)^\beta\le b+\frac{|\xi|^{2\gamma}s}{2}.
\end{equation}
By the triangle inequality and the fact that $\beta\in(0,1)$, we have LHS\eqref{vect_ineq}$\le |\xi|(t^\beta-(t-s)^\beta)\le|\xi|s^\beta$. Applying \eqref{lem_beta} for $x=|\xi|^{2\gamma}s$, we get $|\xi|s^\beta\le$\,RHS\eqref{vect_ineq}. Thus, \eqref{exp_ineq} is proved.

Using the estimate \eqref{exp_ineq}, one can probe by induction on $n\ge 1$ that 
\[p_n(\xi,t)\le\theta\lambda^{n-1}C_ne^{-|\xi|t^\beta}\ \ \ \forall\,n\in\mathbb{N},\]
where $\theta=e^{b}$, $\lambda=2e^{2b}$, and $\{C_n\}$ is the Catalan sequence $C_1=1$, $C_n=\sum_{k=1}^{n-1}C_kC_{n-k}$. Note that $C_n$ grows asymptotically as $4^n$. Since $p_n\in[0,1]$, for any $\delta\in(0,1)$,
\[{{p}_{n}}(\xi ,t)\le {{\left( \theta {{\lambda }^{n-1}}{{C}_{n}}{{e}^{-|\xi |t^\beta}} \right)}^{\delta }}\lesssim {{(4\lambda )}^{\delta n}}{{e}^{-\delta |\xi |t^\beta}}.\]
Choose $\delta>0$ small enough such that $\mu\alpha<1$, where $\mu=4^\delta\lambda^\delta$. Then
\[\psi (\xi ,t)\lesssim \sum\limits_{n=1}^{\infty }{{{({{4}^{\delta }}{{\lambda }^{\delta }}\alpha )}^{n}}{{e}^{-\delta |\xi |t^\beta}}}= \sum\limits_{n=1}^{\infty }{{{(\mu \alpha )}^{n}}{{e}^{-\delta |\xi |t^\beta}}}=\frac{\mu \alpha }{1-\mu \alpha }{{e}^{-\delta |\xi |t^\beta}}.\]
This inequality infers \eqref{decay}.

For Part (iv), let us fix $\epsilon>0$ and drop the subscript $\epsilon$ from  $v_{0,\epsilon}$. Suppose by contradiction that \eqref{MS} with initial data $v_0$ has a solution $v\in L^\infty((0,\tau),PM^{a_*})$ for some $\tau>0$. Let $\phi_0=c_0\hat{v}_0/h\equiv 1+\epsilon$ and $\phi=c_0\hat{v}/h$. Then 
%$\|\phi\|_{L^\infty(\R^d\times(0,\tau))}<\infty$ and $\phi$ satisfies 
\begin{equation}\label{eqphi}
\|\phi\|_{L^\infty(\R^d\times(0,\tau))}<\infty\end{equation}
and $\phi=F[\phi]$, where
\begin{eqnarray*}
F[\phi](\xi,t)={{e}^{-t|\xi {{|}^{2\gamma}}}}{{\phi }_{0}}(\xi)+\int_{0}^{t}{{{e}^{-s|\xi {{|}^{2\gamma}}}}|\xi {{|}^{2\gamma}}\int_{{{\mathbb{R}}^{d}}}{\phi (\eta ,t-s)\phi (\xi -\eta ,t-s)H(\eta |\xi )d\eta ds }}.
\end{eqnarray*}
Let $\phi^{(0)}\equiv 0$ and $\phi^{(n+1)}=F[\phi^{(n)}]$ for all $n\ge 0$. By induction on $n$, one can show that $\phi^{(n)}\le \phi$ for all $n\ge 0$. Let $\bZ$ be the minimal solution process given by
\begin{equation}\label{Zproc}\bZ(\xi,t)=\left\{ \begin{array}{*{35}{l}}
   1+\epsilon & \text{if} & Y_\root> t,  \\
   \bZ^{(1)}(W_1, t-Y_\root)\bZ^{(2)}(W_2, t-Y_\root) & \text{if} & Y_\root\le t.  \\
\end{array} \right.\end{equation}
Define a stochastic recursion $\bZ_n$ as follows: $\bZ_0\equiv 0$ and for $n\ge 0$,
\begin{equation}\label{Znproc}
\bZ_{n+1}(\xi,t)=\left\{ \begin{array}{*{35}{l}}
   1+\epsilon & \text{if} & Y_\root> t,  \\
   \bZ_n^{(1)}(W_1, t-Y_\root)\bZ_n^{(2)}(W_2, t-Y_\root) & \text{if} & Y_\root\le t.  \\
\end{array} \right.\end{equation}
Since $\bZ$ is a minimal solution process, $\lim \bZ_n(\xi,t) = \bZ(\xi,t)$ for all $(\xi,t)$. By conditioning on $Y_\root$, one has
\[\EXP[\bZ_{n+1}(\xi,t)]={{e}^{-t|\xi {{|}^{2\gamma}}}}(1+\epsilon)+\int_{0}^{t}{{{e}^{-s|\xi {{|}^{2\gamma}}}}|\xi {{|}^{2\gamma}}\int_{{{\mathbb{R}}^{d}}}{\EXP[\bZ_n (\eta ,t-s)]\EXP[\bZ_n (\xi -\eta ,t-s)]H(\eta |\xi )d\eta ds }}.\]
Clearly, $\EXP[\bZ_0(\xi,t)]=0=\phi^{(0)}$. By induction on $n$, one get $\EXP\bZ_n=\phi^{(n)}$ for all $n\ge 0$. By Fatou's lemma,
\begin{equation}\label{phiineq}\phi\ge\liminf \phi^{(n)}=\liminf\EXP\bZ_n\ge \EXP\bZ.\end{equation}
Let $\tilde{\bZ}$ be the minimal solution process given by
\begin{equation}\label{Ztildeproc}\tilde{\bZ}(\xi,t)=\left\{ \begin{array}{*{35}{l}}
   (1+\epsilon)^{-1} & \text{if} & Y_\root> t,  \\
   \tilde{\bZ}^{(1)}(W_1, t-Y_\root)\tilde{\bZ}^{(2)}(W_2, t-Y_\root) & \text{if} & Y_\root\le t.  \\
\end{array} \right.\end{equation}
Let $w_0$ be a function such that $\hat{w}_0=(1+\epsilon)^{-1}h/c_0$. Then $\|w_0\|_{PM^{a_*}}=(1+\epsilon)^{-1}<1$. According to Part (iii), the function $w=w(\xi,t)$ whose Fourier transform is $\hat{w}=\EXP[\tilde{\bZ}(\xi,t)]h(\xi)/c_0$ satisfies
\[\hat{w}(\xi,t)\le ce^{-\delta|\xi|t^{1/(2\gamma)}}|\xi|^{-a_*}\ \ \ \forall(\xi,t)\]
for some constants $c,\delta>0$. Consequently, $\EXP[\tilde{\bZ}(\xi,t)]\lesssim e^{-\delta|\xi|t^{1/(2\gamma)}}$. Because the DSY cascade is non-explosive, $\bZ(\xi,t)\tilde{\bZ}(\xi,t)=1$. By \eqref{phiineq} and Schwarz's inequality,
\[\phi(\xi,t)\ge\EXP[\bZ(\xi,t)]\ge \frac{1}{\EXP[\tilde{\bZ}(\xi,t)]}\gtrsim e^{\delta|\xi|t^{1/(2\gamma)}}.\]
This contradicts the fact that $\|\phi\|_{L^\infty(\R^d\times(0,\tau))}<\infty$.
\end{proof}
\begin{remark}
Le Jan and Sznitman thinned the stochastic cascade to avoid the stochastic explosion issue. This procedure simplifies the proof of uniqueness of solutions \cite[Thm 2.4]{lejan}. The same technique was used in \cite[Thm 3.2]{rabi}. However, the side effect of thinning is that it reduces the size of the ball in which the uniqueness can be proven. Specifically, they prove the uniquess in the ball of radius $\kappa/2$ only. In contrast, one can see in the proof of Part (iii) that the stochastic explosion gives a natural mechanism to obtain the convergence $\barX_n\to 0$ on the explosion event $[S\le t]$, which leads to the uniqueness of solution in the ball of radius $\kappa$.
\end{remark}
\begin{remark}
Part (iv) shows that there are no minimal blowup initial data for \eqref{FMS} in the critical space $PM^{a_*}$. A global solution exists as long as $v_0$ is in the closed ball of radius $\kappa$. But in any neighborhood of the ball in $PM^{a_*}$, one can find an initial data $v_0$ such that the corresponding solution blows up instantly. Note that our method requires the non-explosion of the $(d,\gamma)$-DSY cascade to obtain such a sharp result. In any $(d,\gamma)$-DSY cascade, one can construct a sufficiently large initial data that yields a finite-time blowup solution (see \autoref{BlowupMSdgamma} below).
\end{remark}
\begin{remark}
The majorizing kernel $h:\R\to(0,\infty]$ does not exist in the case $d=1$, $\gamma=1$ \cite{orum-ossiander}, so our probabilistic model does not apply to this case. Sverak and Polacik showed that the complex Burgers equation ($d=1$, $\gamma=1$) has no minimal blowup data in the critical space $L^1(\R\times (0,1))$ \cite{sverakpolacik}.
\end{remark}
\begin{cor}\label{pmsol}
Let $d\ge 2$, $\gamma\in(\frac{1}{2},\frac{d+2}{4})$, and let $a_*$ and $\kappa$ be defined as in \autoref{mspmsol}. We have the following statements.
\begin{enumerate}[(i)]
\item If $u_0\in PM^a$ for $a\in(a_*,d)$ then there exists a number $\tau\in(0,\infty)$ such that \eqref{uhateq} has a unique solution $u\in BC((0,\tau),PM^a)$.
\item If $u_0\in PM^{a_*}$ and $\|u_0\|_{PM^{a_*}}\le 2\kappa$, then \eqref{uhateq} has a global solution $u\in BC((0,\infty),PM^{a_*})$.
\item If $u_0\in PM^{a_*}$ and $\|u_0\|_{PM^{a_*}}< 2\kappa$, the global solution $u$ is unique in the ball $\{w:\|w\|<2\kappa\}$ of $L^\infty((0,\infty),PM^{a_*})$. Moreover,
\[|\hat{u}(\xi,t)|\le c_1e^{-\delta|\xi|t^{1/(2\gamma)}}|\xi|^{-a_*}\ \ \ \forall(\xi,t)\]
where the constants $c_1,\delta>0$ depend on $\gamma$ and $\|u_0\|_{PM^{a_*}}$.
\end{enumerate}
% Suppose $u_0\in PM^{\alpha}(\R^d)$, with $\alpha=d+1-2\gamma$, and 
% \[\|u_0\|_{PM^{a}}\le \frac{2c_{d,\gamma}}{c_0}= 2^{\frac{d+2}{2}}\frac{\Gamma (2\gamma -1)\Gamma {{\left( \frac{d+1-2\gamma }{2} \right)}^{2}}}{\Gamma \left( \frac{d+2-4\gamma }{2} \right)\Gamma {{\left( \frac{2\gamma -1}{2} \right)}^{2}}}.\] 
% Then \ref{nsedg} has a unique solution $u\in BC((0,\infty),PM^{a})$ and $\|u\|_{L^\infty_tPM_x^{a}}\le \frac{2c_{d,\gamma}}{c_0}$.
\end{cor}
\begin{proof}
Part (i) was proven in \cite[Theorem 1.2, 1.3]{ferreira}. For Part (ii), let $\chi_0(\xi)=c_0\frac{\hat{u}_0(\xi)}{h(\xi)}$ and $\psi_0=|\chi_0|/2$. Then $\|\psi_0\|_{L^\infty}=\frac{1}{2}\|\chi_0\|_{L^\infty}\le 1$. According to \autoref{mspmsol}(ii), \eqref{nFMS} with initial data $\psi_0$ has a global solution $\psi=\EXP\barX$, where $\barX$ is the minimal solution process defined by \eqref{tilX}, and $0\le \psi(\xi,t)\le 1$ for all $(\xi,t)$. By \autoref{69211}(c), the function $\chi=\EXP_\xi\bX$ is a global solution to \eqref{mildnse} and $|\chi|\le 2\psi\le 2$. Here, $\bX$ is the minimal solution process defined by \eqref{eq:822191}.

For Part (iii), let $u$ and $v$ be functions such that $\hat{u}=\chi h/c_0$ and $\hat{v}=\psi h/c_0$. Since $\|\psi_0\|_{L^\infty}<1$, by \autoref{mspmsol}(iii) one has
\[\hat{v}(\xi,t)\le ce^{-\delta|\xi|t^{1/(2\gamma)}}|\xi|^{-a_*}\ \ \ \forall(\xi,t)\]
or equivalently,
\[\psi(\xi,t)\lesssim e^{-\delta|\xi|t^{1/(2\gamma)}}\ \ \ \forall(\xi,t).\]
By the Majorizing Principle, $|\chi(\xi,t)|\le 2\psi(\xi,t)\lesssim e^{-\delta|\xi|t^{1/(2\gamma)}}$. 

To prove that $u$ is the unique solution to \eqref{uhateq} in the ball $\{w:\|w\|<2\kappa\}$ of $L^\infty((0,\infty),PM^{a_*})$, we use the ground-state argument as was done in \autoref{mspmsol}(iii). Let $\tilde{u}$ be another solution to \eqref{uhateq} such that $\|\tilde{u}\|_{L^\infty_t PM^{a_*}}<2\kappa$. Let $\tilde{\chi}=c_0\mathcal{F}\{\tilde{u}\}/h$, which is a solution to \eqref{mildnse}, and let $$\alpha=\max\{\|\chi_0\|_{L^\infty},\|\tilde{\chi}\|_{L^\infty}\}<2.$$
Consider a sequence of stochastic processes $\tilde{\bX}_n$ defined inductively by $\tilde{\bX}_0(\xi,t)=\tilde{\chi}(\xi,t)$ and for $n\ge 0$,
\begin{equation}\label{tilX2}\tilde{\bX}_{n+1}(\xi,t)=\left\{ \begin{array}{*{35}{l}}
   \chi_0(\xi) & \text{if} & Y_\root> t,  \\
   \tilde{\bX}_n^{(1)}(W_1, t-Y_\root)\odot_{\xi}\tilde{\bX}_n^{(2)}(W_2, t-Y_\root) & \text{if} & Y_\root\le t.  \\
\end{array} \right.\end{equation}
By conditioning on $Y_\root$, one has
\[\EXP[\tilde{\bX}_{n+1}(\xi,t)]={{e}^{-t|\xi {{|}^{2\gamma}}}}(1+\epsilon)+\int_{0}^{t}{{{e}^{-s|\xi {{|}^{2\gamma}}}}|\xi {{|}^{2\gamma}}\int_{{{\mathbb{R}}^{d}}}{\EXP[\tilde{\bX}_n (\eta ,t-s)]\odot_\xi\EXP[\tilde{\bX}_n (\xi -\eta ,t-s)]H(\eta |\xi )d\eta ds }}.\]
Clearly, $\EXP\tilde{\bX}_0=\tilde{\chi}$. By induction on $n$, one get $\EXP\tilde{\bX}_n=\tilde{\chi}$ for all $n\ge 0$. On the non-explosion event $[S>t]$, one has $\tilde{\bX}_n=\bX$ for sufficiently large $n$, and $\bX$ is an $\odot$-product of $|\partial V(\xi,t)|$ vectors, each of which has length of at most $\alpha$. By the inequality $|a\odot_\xi b|\le\frac{1}{2}|a||b|$, one has
\[|\bX(\xi,t)|\1_{S>t}\le 2^{1-|\partial V(\xi,t)|}\alpha^{|\partial V(\xi,t)|}\le 2.\]
On the explosion event $[S\le t]$, $\tilde{\bX}_n$ is an $\odot$-product of $2^n$ vectors, each of which has length of at most $\alpha$. Thus, one has $$|\tilde{\bX}_n(\xi,t)|\1_{S\le t}\le 2^{1-2^n}\alpha^{2^n}\le 2$$
and thus, $\tilde{\bX}_n(\xi,t)\1_{S\le t}\to 0$ as $n\to\infty$. By Lebesgue's Dominated Convergence Theorem, $\lim\EXP\tilde{\bX}_n=\EXP\bX$. In other words, $\tilde{\chi}=\chi$.
\end{proof}
In \cite[Prop.\ 4.14]{dascaliuc2019jan}, where the case $d\ge 3$ and $\gamma=1$ is considered, the authors showed that the solution blows up in finite time for sufficiently large compactly supported initial data. In the following proposition, we show that the same result holds for the general case $d\ge 1$ and $\gamma\in(\frac{1}{2},\frac{d+2}{4})$. Note that an initial data $u_0$ compactly supported in the Fourier space yields a local-in-time solution $u\in BC((0,\tau_a),PM^a)$ for any $a\in(a_*,d)$ according to \autoref{mspmsol}(i).
\begin{prop}
\label{BlowupMSdgamma}
Let $d\ge 1$ and $\gamma\in(\frac{1}{2},\frac{d+2}{4})$. Let $u_0:\mathbb{R}^d\to\mathbb{R}$ be a function 
%such that $\mathcal{F}\{u_0\}$ 
whose Fourier transform is real-valued, nonnegative on $\mathbb{R}^d\backslash\{0\}$, and bounded away from zero on some nonempty open subset of $\mathbb{R}^d$. Then there exists $a_0>0$ such that for any $a\ge a_0$, the solution $u$ to the Cauchy problem {\eqref{FMS}} with initial data $au_0$ satisfies $u(\cdot,1)\not\in \dot{B}^\alpha_{\infty,\infty}$ for any $\alpha\in\mathbb{R}$.
\end{prop}
\begin{proof}
Without loss of generality, one may assune $\hat{u}_0(\xi)\ge \epsilon>0$ for all $\xi\in D=(2e_1+B_1)\cup(-2e_1+B_1)$ where $B_1$ is the unit ball centered at the origin and $e_1=(1,0,...,0)\in\mathbb{R}^d$. 
%We consider the stochastic cascade associate with the kernel $h$ given by \eqref{constantc}. 
%where 
%\[c_d=\frac{\Gamma {{\left( \frac{d-1}{2} \right)}^{2}}}{\Gamma \left( \frac{d-2}{2} \right){{\pi }^{(d+2)/2}}}.\]
Denote 
%by $q_n$'s be the probabilities defined by \eqref{probability}.
%\begin{equation*}%\label{probability2}
\[q_n(\xi,t)=\mathbb{P}_{\xi}(S_\xi>t,\ |\partial V(\xi,t)|=n,\ W_v\in D \text{ for all } v\in\partial V(\xi,t)\}.\] 
%To be precise, that all paths cross the horizon $t$ on $D$ means $W_v\in D$ for all $v\in \partial V(\xi,t)$ where $\partial V(\xi,t)$ is the set of $t$-leaves defined by \eqref{tleaves}. 
Let $\psi_0=c_0a\hat{u}_0/h$ and let $\psi=\EXP\bar{\X}$ be the solution to \eqref{nFMS} with initial condition $\psi_0$. By \eqref{constantc} and the fact that $|\xi|\ge 1$ for all $\xi\in D$, one has $h(\xi)\le c_{d,\gamma}$ for all $\xi\in D$. Then
\begin{equation}\label{eq:928212}\psi (\xi ,t)=\EXP\left[\prod_{v\in\partial V(\xi,t)}\psi_0(W_v)\1_{S>t}\right]\ge\EXP\left[\left(\frac{ac_0\epsilon}{c_{d,\gamma}}\right)^{|\partial V(\xi,t)|}\1_{S>t}\right]\ge \sum\limits_{n=1}^{\infty }{{\left(\frac{ac_0\epsilon}{c_{d,\gamma}}\right)^{n}}{{q}_{n}}(\xi ,t)}.\end{equation}
We have $q_1(\xi,t)=e^{-|\xi|^{2\gamma}t}\mathbbm{1}_D(\xi)$. By conditioning on the first time of branching, one gets
\begin{equation}\label{probrecursion2}
{{q}_{n}}(\xi ,t)=\int_{0}^{t}{|\xi |^{2\gamma}{{e}^{-s|\xi|^{2\gamma}}}\int_{{{\mathbb{R}}^{d}}}{\sum\limits_{k=1}^{n-1}{{{q}_{k}}(\eta ,t-s){{q}_{n-k}}(\xi -\eta ,t-s)H(\eta |\xi )d\eta ds}}}.\end{equation}
From now on, the proof follows almost verbatim the proof of Proposition 4.14 in \cite{dascaliuc2019jan}, which we will skip.
\end{proof}
\section{Equations in $\R^2$ - Role of Symmetries in Extending Solutions Beyond $t_c$.}\label{Section5}
In $\R^2$, the orthogonality constraint $\xi\cdot\chi(\xi)=0$ imposes that $\chi(\xi)$ is a scaling of $\xi^\perp$. This geometric property gives a detailed structure of the $\odot$-product as we already pointed out in \autoref{cased=2}. In the following proposition, we establish a closed form of the minimal solution process. 

Recall that $\mathring{V}(t)$ is the set of $t$-internal vertices defined by \eqref{tancestors}. The relation $|\Vo(\xi,t)|=|\partial V(\xi,t)|-1$ holds on the non-explosion event $[S>t]$. Denote by $\theta_{u,v}\in[0,\pi]$ the angle between two vectors $u$ and $v$.

%In this section, we exploit this structure to express the solution process corresponding to these equations as a product of scalars on the corresponding binary tree.  This structure is further used to show that $t_c$ defined in \eqref{tcritical} is finite for some specially constructed initial data.   Nevertheless, it is possible under some further conditions on the initial data, to extend these solutions beyond this critical time, leading to a global solution of the equations.  This extension is possible due to the symmetries of the solution process with respect to tree automorphisms that preserve the DSY structure.  

%We begin by obtaining an expression of the solution operator that involves a product of scalar quantities determined in terms of the initial data, and the angles between the labels of the $(d,\gamma)-DSY$.  

\begin{prop}\label{Xclosedform}
Let $d=2$ and $\gamma\in(1/2,1)$, and let $\chi_0:\R^2\to \C^2$ be a vector field satisfying \eqref{divfree}. Let $\bX(\xi,t)=\bX(\xi,t)\1_{[S>t]}$ be the minimal solution process defined by \eqref{eq:822191}. One has
\begin{eqnarray}
\nonumber\X(\xi,t)=\left(\frac{i}{2}\right)^{|\Vo(\xi,t)|}\left(\prod_{v\in \Vo(\xi,t)}\textup{sign}(W_v\times W_{v1})\sin(\theta_{W_v,W_{v1}}-\theta_{W_v,W_{v2}})\right)\cdot\\
\label{closedform}\left(\prod_{v\in\partial V(\xi,t)}\chi_0(W_v)\cdot e_{W_v^\perp}\right)e_{\xi^\perp}
\end{eqnarray}
\end{prop}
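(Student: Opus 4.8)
The plan is to prove \eqref{closedform} by induction on the (finite, on the event $[S>t]$) cardinality of the internal vertex set $\Vo(\xi,t)$, using the recursion \eqref{eq:822191} together with the $2$-dimensional reduction of the $\odot$-product from \autoref{cased=2}. First I would set up the base case: if $|\Vo(\xi,t)|=0$, then $Y_\theta=Y\ge t$, so $\X(\xi,t)\1_{[S>t]}=\chi_0(\xi)$ while the right-hand side of \eqref{closedform} is the empty product $\left(\frac{i}{2}\right)^0\cdot 1\cdot(\chi_0(\xi)\cdot e_{\xi^\perp})e_{\xi^\perp}$; since $\chi_0$ satisfies \eqref{divfree}, i.e. $\xi\cdot\chi_0(\xi)=0$, the vector $\chi_0(\xi)$ is parallel to $\xi^\perp$, hence $(\chi_0(\xi)\cdot e_{\xi^\perp})e_{\xi^\perp}=\chi_0(\xi)$, matching the base case.

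For the inductive step, suppose $Y<t$, so the root $\theta$ is internal and the tree branches into two subtrees rooted at the children of $\theta$, carrying wave vectors $W_1$ and $W_2=\xi-W_1$ and time budget $t-Y$. By the inductive hypothesis applied to $\X^{(1)}(W_1,t-Y)\1_{[S_{W_1}>t-Y]}$ and $\X^{(2)}(W_2,t-Y)\1_{[S_{W_2}>t-Y]}$, each subtree solution has the claimed product form, ending in a factor $e_{W_1^\perp}$, respectively $e_{W_2^\perp}$. Now I apply \eqref{odot} from \autoref{cased=2} with $\eta=W_1$, $\zeta=W_2$: each subtree solution is a scalar multiple of $e_{W_j^\perp}$, and \eqref{odot} gives
\[
\X^{(1)}\odot_\xi\X^{(2)}=\frac{i}{2}\,\textup{sign}(\xi\times W_1)\,(\X^{(1)}\cdot e_{W_1^\perp})(\X^{(2)}\cdot e_{W_2^\perp})\sin(\theta_{\xi,W_1}-\theta_{\xi,W_2})\,e_{\xi^\perp}.
\]
This contributes exactly the root factor $\frac{i}{2}\,\textup{sign}(W_\theta\times W_{\theta 1})\sin(\theta_{W_\theta,W_{\theta 1}}-\theta_{W_\theta,W_{\theta 2}})$ to the product, and the dot products $\X^{(j)}\cdot e_{W_j^\perp}$ recover the two sub-products over $\Vo$ and $\partial V$ of the respective subtrees. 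Since $\Vo(\xi,t)=\{\theta\}\sqcup\Vo^{(1)}\sqcup\Vo^{(2)}$ and $\partial V(\xi,t)=\partial V^{(1)}\sqcup\partial V^{(2)}$ (the disjoint unions of the internal and leaf sets of the two subtrees), multiplying the factors collapses to the full products in \eqref{closedform}, with the power of $i/2$ adding as $1+|\Vo^{(1)}|+|\Vo^{(2)}|=|\Vo(\xi,t)|$.

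The main obstacle — really the only nontrivial point — is bookkeeping the indices: one must check that the ``in a suitable order'' $\odot$-product built by the recursion \eqref{eq:822191} genuinely reorganizes into the unordered products $\prod_{v\in\Vo}$ and $\prod_{v\in\partial V}$ appearing in \eqref{closedform}, i.e. that associativity/commutativity of scalar multiplication makes the order irrelevant once everything has collapsed to scalars times $e_{\xi^\perp}$. This is precisely what the $d=2$ reduction buys us: because \eqref{odot} shows every $\odot_\xi$-product of vectors orthogonal to their own arguments is again a scalar times $e_{\xi^\perp}$, the nested vector recursion becomes a nested \emph{scalar} recursion, and scalar products commute. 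One should also note that the conditioning labels are consistent, i.e. the child wave vectors $W_{v1},W_{v2}$ appearing in \autoref{dsy} are exactly the $W_1,W_2$ in the recursion, and that \eqref{odot} requires $\xi=\eta+\zeta$, which holds since $W_{v2}=W_v-W_{v1}$. Finally, a brief remark that on $[S>t]$ the recursion terminates after finitely many steps legitimizes the induction on $|\Vo(\xi,t)|$.
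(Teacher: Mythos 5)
Your proposal is correct and takes essentially the same approach as the paper: strong induction on $|\Vo(\xi,t)|$, with the base case handled via the divergence-free condition \eqref{divfree} and the inductive step combining the subtree decomposition of $\Vo$ and $\partial V$ with the two-dimensional $\odot$-product identity of Proposition~\ref{cased=2}. Your additional remarks on the base case and on why the ``suitable order'' collapses to unordered scalar products only make explicit what the paper leaves implicit.
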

\begin{proof}
%We emphasize the dependence of the set of internal nodes $\Vo(\xi,t)$ and the set of $t$-leaves $\partial V(\xi,t)$ on the solution process $\X$ by writing them as $\Vo(\X,\xi,t)$ and $\partial V(\X,\xi,t)$. 
We will prove by induction on $N(\xi,t)=|\Vo(\xi,t)|$. If $N(\xi,t)=0$ then $\Vo(\xi,t)=\emptyset$, $\partial V(\xi,t)=\{\root\}$ and
\[\X(\xi,t)=\chi_0(\xi,t)=(\chi_0(\xi,t)\cdot e_{\xi^\perp})e_{\xi^\perp}.\]
Thus, \eqref{closedform} holds for the base case $N(\xi,t)=0$. Suppose that \eqref{closedform} holds for $N(\xi,t)\le n$ for some $n\ge 0$. Consider the case $N(\xi,t)=n+1\ge 1$. Then $Y_\root<t$ and
\begin{equation}\label{52251}\X(\xi,t)=\X^{(1)}(W_1,t-Y_\root)\odot_{\xi}\X^{(2)}(W_2,t-Y_\root).\end{equation}
Note that
\begin{eqnarray}
\label{52252}\Vo(\xi,t)&=&\{\root\}\cup\Vo^{(1)}(W_1,t-Y_\root)\cup\Vo^{(2)}(W_2,t-Y_\root)\\
\label{52253}\partial V(\xi,t)&=&\partial V^{(1)}(W_1,t-Y_\root)\cup\partial V^{(2)}(W_2,t-Y_\root)
\end{eqnarray}
where $\Vo^{(k)}$ and $\partial V^{(k)}$ are the set of internal vertices and the set of leaves of the subtree rooted at $W_k$. Since $N^{(1)}(W_1,t-Y_\root)+N^{(2)}(W_2,t-Y_\root)=N(\xi,t)-1=n$, one deduces by the induction hypothesis that for $k\in\{1,2\}$,
\begin{eqnarray*}
\nonumber\X^{(k)}(W_k,t-Y_\root)&=&\left(\frac{i}{2}\right)^{N^{(k)}(W_k,t-Y_\root)}\cdot\\
&&\left(\prod_{v\in \Vo^{(k)}(W_k,t-Y_\root)}\textup{sign}(W_v\times W_{v1})\sin(\theta_{W_v,W_{v1}}-\theta_{W_v,W_{v2}})\right)\cdot\\
&&\left(\prod_{v\in\partial V^{(k)}(W_k,t-Y_\root)}\chi_0(W_v)\cdot e_{W_v^\perp}\right)e_{W_k^\perp}
\end{eqnarray*}
Applying \eqref{52251}-\eqref{52253} and the following identity (proven in \autoref{cased=2})
\[e_{W_1^\perp}\odot_\xi e_{W_2^\perp}=\frac{i}{2}\text{sign}(\xi\times W_1)\sin(\theta_{\xi,W_1}-\theta_{\xi,W_2})e_{\xi^\perp},\]
one obtains \eqref{closedform}.
\end{proof}
Thanks to \autoref{Xclosedform}, we can write the minimal solution process as
\begin{equation}\label{funcphi}
\bX(\xi,t)=\X(\xi,t)\1_{[S>t]}=\phi_{\xi,t}(\{\Sigma_v\}_{v\in\T},\{\Theta_v\}_{v\in\T})\end{equation}
where the family of \emph{signs} $\{\Sigma_v\}_{v\in\T}$ and the family of \emph{angles} $\{\Theta_v\}_{v\in\T}$ are described in the following proposition.
\begin{prop}\label{iidfamilies}
For any $v\in\T$, let $\Sigma_v=\textup{sign}(W_v\times W_{v1})$ and
\[\Theta_{v}=(\theta_{W_v,W_{v1}},\theta_{W_v,W_{v2}})\in\triangle:=\{(\phi_1,\phi_2):\ \phi_1,\phi_2\ge 0,\ \phi_1+\phi_2\le\pi\}.\]
Then
\begin{enumerate}[(a)]
\item $\{\Sigma_v\}_{v\in \T}$ is an i.i.d.\ family of Bernoulli random variables with parameter $1/2$.
\item $\{\Theta_v\}_{v\in \T}$ is an i.i.d.\ family of random vectors whose joint pdf is given by \eqref{angle-distr}.
%\item The families $\{\Sigma_v\}_{v\in \T}$ and $\{\Theta_v\}_{v\in \T}$ are independent of each other.
\item $\{\Sigma_v\}_{v\in \T}$ and $\{\Theta_v\}_{v\in \T}$ are independent each other.
\end{enumerate}
\end{prop}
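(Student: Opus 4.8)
The plan is to establish each of the three claims by tracking how the random vectors $W_v$ are generated down the tree, using the conditional independence structure from \autoref{dsy}(iii) together with the angular distribution computed in \eqref{angle-distr}. First I would reduce everything to a statement about a single generation: by \autoref{dsy}, given $W_v$, the pair $(\Phi_{v1},\Phi_{v2})$ depends only on the relative position of $W_{v1}$ inside $\mathbb{R}^d$, and by the preceding proposition its conditional law given $W_v$ is $f_{d,\gamma}$, which does \emph{not} depend on $W_v$. Hence $\Theta_v$ is independent of $W_v$ and has the fixed density \eqref{angle-distr}; since the $W_{v1}$'s at distinct vertices are generated from conditionally independent subfamilies, the collection $\{\Theta_v\}_{v\in\T}$ inherits full independence. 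This gives part (b).

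For part (a), the key observation is the symmetry of the kernel $H(\cdot|W_v)$ under the reflection fixing $W_v$: writing $W_{v1}=|W_{v1}|(\cos\Phi_{v1}\,e_{W_v}+\sin\Phi_{v1}\,w)$ with $w\in\mathbb{S}^{d-2}_{W_v}$, the density $H(\eta|W_v)$ depends on $\eta$ only through $|\eta|$ and $|W_v-\eta|$, hence is invariant under $w\mapsto -w$ (for $d=2$, under the reflection across the line through $W_v$). This reflection flips the sign of $W_v\times W_{v1}$ while preserving $\Phi_{v1},\Phi_{v2}$ and the conditional law. Therefore $\Sigma_v=\textup{sign}(W_v\times W_{v1})$ is, conditionally on $W_v$ and on $\Theta_v$, a fair coin flip, which simultaneously yields part (a), the parameter-$1/2$ claim, and part (c), the conditional (hence unconditional) independence of $\Sigma_v$ from $\Theta_v$. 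The i.i.d.\ structure across $v$ again follows from \autoref{dsy}(iii).

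Concretely the steps are: (1) fix a vertex $v$ and condition on $W_v$; express $W_{v1}$ in polar form relative to $W_v$ and identify the $\sigma$-algebra generated by $(\Phi_{v1},\Phi_{v2})$ versus that generated by the angular ``orientation'' variable; (2) invoke the reflection symmetry of $H(\cdot|W_v)$ to show the orientation is uniform on its two (or, for general $d$, sphere-valued) choices, independent of the angles, which delivers (a) and (c) at the single-vertex level; (3) quote the previous proposition for the law of $\Theta_v$, giving (b) at the single-vertex level; (4) propagate to the whole tree: the map $(\{W_{v\sigma}\}_\sigma)\mapsto(\{\Sigma_{v\sigma}\}_\sigma,\{\Theta_{v\sigma}\}_\sigma)$ is measurable, and the subfamilies rooted at $v1$ and $v2$ are conditionally independent given $(W_{v1},W_{v2})$ by \autoref{dsy}(iii), so an induction on generation depth promotes single-vertex independence to joint independence of all three collections.

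The main obstacle, modest as it is, is step (2): one must argue carefully that ``flipping orientation'' is a measure-preserving involution of the law of $W_{v1}$ given $W_v$ that fixes $(\Phi_{v1},\Phi_{v2})$ pointwise. In dimension $d=2$ this is transparent --- the reflection across the line $\mathbb{R}W_v$ sends $W_{v1}$ to its mirror image, preserves both $|W_{v1}|$ and $|W_v-W_{v1}|$ hence $H(\cdot|W_v)$, preserves $\theta_{W_v,W_{v1}}$ and $\theta_{W_v,W_{v2}}$, and negates $W_v\times W_{v1}$ --- so on the zero-probability-free event $\{W_v\times W_{v1}\neq 0\}$ the conditional law of $\Sigma_v$ is $\tfrac12(\delta_{+1}+\delta_{-1})$ independently of $\Theta_v$. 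One should also note that $\{W_v\times W_{v1}=0\}$, i.e.\ $\{\Phi_{v1}\in\{0,\pi\}\}$, has probability zero under $f_{d,\gamma}$ (the factor $\sin^{2\gamma-2}\Phi_{v1}\sin^{2\gamma-2}\Phi_{v2}$ is integrable but the boundary is null), so $\Sigma_v$ is almost surely well defined. The rest is bookkeeping with \autoref{dsy}(iii).
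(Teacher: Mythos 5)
Your proof is correct, and for parts (a) and (c) it rests on a different --- and in fact sharper --- symmetry than the one the paper uses. The paper proves (a) via the involution $\eta\mapsto W_v-\eta$ (swapping the two children): $H(\cdot\,|W_v)$ is invariant under it and it carries the half-plane $\{W_v\times\eta>0\}$ onto $\{W_v\times\eta<0\}$, giving $\P_\xi(\Sigma_v=1)=\P_\xi(\Sigma_v=-1)=\tfrac12$; but this map permutes the two components of $\Theta_v$, so it does not by itself yield the independence claimed in (c), which the paper then dismisses as ``obvious.'' Your involution --- reflection across the line $\mathbb{R}W_v$ --- likewise preserves $H(\cdot\,|W_v)$ and negates $W_v\times W_{v1}$, but it fixes $\Theta_v$ pointwise, so it produces the conditional law $\P(\Sigma_v=\pm 1\mid W_v,\Theta_v)=\tfrac12$ in one stroke and hence delivers (a) and (c) simultaneously; this is a genuine gain in economy over the paper's write-up. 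Your handling of (b) and of the propagation to the whole tree (the conditional law of $(\Sigma_v,\Theta_v)$ given the parent data is a fixed product measure not depending on $W_v$, combined with the sibling independence of \autoref{dsy}(iii) and an induction on generation depth) supplies exactly the bookkeeping the paper leaves implicit --- note that sibling independence alone does not cover vertices along a common ray, so the ``conditional law independent of $W_v$'' step is genuinely needed there --- and your observation that $\{W_v\times W_{v1}=0\}$ is a null event is a worthwhile point the paper omits.
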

\begin{proof}
To prove (a), notice that the vector $W_v$ partitions the plane into two halves
\begin{eqnarray*} 
A&=&\{\eta\in\mathbb{R}^2:\ W_v\times \eta=W_v^\perp\cdot \eta>0\},\\
B&=&\{\eta\in\mathbb{R}^2:\ W_v\times \eta=W_v^\perp\cdot \eta<0\}.
\end{eqnarray*}
%If $W_{v1}\in A$ then $\Sigma_v=1$. If $W_{v2}\in B$ then $\Sigma_v=-1$. 
We have
\begin{eqnarray*}
\P_\xi(\Sigma_v=1)=\P_\xi(W_{v1}\in A)&=&\EXP_\xi[\P_\xi(W_{v1}\in A|W_v)]=\EXP_\xi\left[\int_A H(\eta|W_v)d\eta\right].
\end{eqnarray*}
Using the change of variable $\zeta=W_v-\eta$ and noting that $H(\eta|W_v)=H(\zeta|W_v)$, we have
\[\P_\xi(\Sigma_v=1)=\EXP_\xi\left[\int_A H(\zeta|W_v)d\eta\right]=\EXP_\xi\left[\int_B H(\eta|W_v)d\eta\right]=\P_\xi(W_{v1}\in B)=\P_\xi(\Sigma_v=-1).\]
Therefore, $\Sigma_v\sim \text{Bernoulli}(1/2)$.

Part (b) and (c) are obvious since each vector $\Theta_v$ has a joint pdf given by \eqref{angle-distr}.
%To prove (b), recall that vector $\Theta_v$ has a joint pdf given by \eqref{angle-distr}. The family  because the distribution of $\Sigma_v$'s and $\Theta_v$'s are independent of one another.
\end{proof}

The structure of the $\odot$-product in $\R^2$ as given in \eqref{odot} also gives a stronger characterization of the critical time of integrability $t_c$ defined by \eqref{tcritical}. Specifically, one has
\begin{prop}\label{tprop}
Let $\chi_0:\R^2\to \C^2$ be a vector field satisfying \eqref{divfree} and $\bX$ be the minimal solution process defined by \eqref{eq:822191}. 
Let $t_c\ge 0$ be defined by \eqref{tcritical} and  $f(\xi,t)=\EXP_\xi[|\X(\xi,t)|\1_{S>t}]\in[0,\infty]$. Then: 
\begin{enumerate}[(a)]
\item The function $v(\xi,t)=\frac{h(\xi)}{c_{0}}f(\xi,t)$ satisfies
\begin{equation}\label{mildfnsscalar}
v(\xi,t)=v_0(\xi)e^{-|\xi|^{2\gamma}t}+\frac{c_0}{2}\int_0^t \int_{\R^2} |\xi|e^{-|\xi|^{2\gamma}s} v(\eta,t-s) v(\xi-\eta,t-s)|\sin(\theta_{\xi,\eta}-\theta_{\xi,\xi-\eta})| d\eta ds
\end{equation}
where $v_0(\xi)=\frac{h(\xi)}{c_0}|\chi_0(\xi)|$. Consequently, $e^{|\xi|^{2\gamma}t}f(\xi,t)$ is increasing in $t$ for every fixed $\xi\in\R^2\backslash\{0\}$.
\item If $t_c>0$ then for every $t\in(0,t_c)$, $f(\xi,t)<\infty$ for almost every $\xi\in\R^2$. Consequently, $f(\xi,t)<\infty$ for almost every $(\xi,t)\in\mathbb{R}^2\times(0,t_c)$.
\item If
\begin{equation}\label{sym}
\exists A\subset \R^2: m(A)>0 \text{~~and~~}\chi_0(\xi)\ne 0\text{~~a.e.~~}\xi\in A\cup (-A)\end{equation}
then $f(\xi,t)>0$ for all $\xi\in\R^2\backslash\{0\}$ and $t>0$. Here, $m$ denotes the Lebesgue measure on $\R^2$.
\item If $t_c<\infty$ and \eqref{sym} is satisfied then  $f(\xi,t)=\infty$ for all $(\xi,t)\in\R^2\times(t_c,\infty)$.
\end{enumerate}
\end{prop}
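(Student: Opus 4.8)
I would first derive a scalar recursion for the nonnegative quantity $|\X(\xi,t)|\1_{[S>t]}$ and then take expectations, paralleling the proof of \autoref{819191} but with all quantities nonnegative so that Tonelli applies without any integrability hypothesis. Starting from \eqref{eq:822191} and distributing $\1_{[S>t]}$ through $\odot_\xi$ (on $[Y_\emptyset<t]$ one has $\1_{[S>t]}=\1_{[S_{W_1}>t-Y_\emptyset]}\1_{[S_{W_2}>t-Y_\emptyset]}$), the task reduces to computing $|(\X^{(1)}\1)\odot_\xi(\X^{(2)}\1)|$; by \autoref{Xclosedform} each factor $\X^{(k)}\1_{[S_{W_k}>\cdot]}$ is a scalar multiple of $e_{W_k^\perp}$, so \autoref{cased=2} (formula \eqref{odot}) gives $|(\X^{(1)}\1)\odot_\xi(\X^{(2)}\1)|=\tfrac12|\sin(\theta_{\xi,W_1}-\theta_{\xi,W_2})|\,|\X^{(1)}\1|\,|\X^{(2)}\1|$. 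Conditioning on $(Y_\emptyset,W_1)$ and using conditional independence then yields
\[
f(\xi,t)=|\chi_0(\xi)|e^{-|\xi|^{2\gamma}t}+\int_0^t|\xi|^{2\gamma}e^{-|\xi|^{2\gamma}s}\int_{\R^2}\tfrac12|\sin(\theta_{\xi,\eta}-\theta_{\xi,\xi-\eta})|f(\eta,t-s)f(\xi-\eta,t-s)H(\eta|\xi)\,d\eta\,ds,
\]
and multiplying by $h(\xi)/c_0$ and invoking \eqref{Hkernel} together with $h*h(\xi)=|\xi|^{2\gamma-1}h(\xi)$ from \eqref{hkernel} converts this into \eqref{mildfnsscalar}. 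For the monotonicity, multiply \eqref{mildfnsscalar} by $e^{|\xi|^{2\gamma}t}$ and substitute $s\mapsto t-s$: the right side becomes $v_0(\xi)$ plus an integral over $(0,t)$ of a nonnegative integrand, hence nondecreasing in $t$, and the same then holds for $e^{|\xi|^{2\gamma}t}f=\tfrac{c_0}{h(\xi)}e^{|\xi|^{2\gamma}t}v$.

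\textbf{Parts (b) and (c).} Part (b) is immediate from (a): by the monotonicity, $f(\cdot,t_2)<\infty$ a.e.\ implies $f(\cdot,t_1)<\infty$ a.e.\ for all $t_1<t_2$, so $\{t:f(\cdot,t)<\infty\text{ a.e.}\}$ is an interval containing $[0,t_c)$, and Tonelli gives $f<\infty$ a.e.\ on $\R^2\times(0,t_c)$. For (c), set $\mathcal G=\{\xi\ne0:f(\xi,t)>0\ \forall t>0\}=\bigcap_n\{f(\cdot,1/n)>0\}$, which is measurable (using the monotonicity again). From $f(\xi,t)\ge|\chi_0(\xi)|e^{-|\xi|^{2\gamma}t}$ in \eqref{mildfnsscalar} we get $\{\chi_0\ne0\}\subseteq\mathcal G$, and by \eqref{sym}, after intersecting $A$ with a large ball, a bounded symmetric positive-measure set $E_0\subseteq\mathcal G$ (up to a null set). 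The engine is the propagation property: if $E\subseteq\mathcal G$ up to null and $\1_E*\1_E(\xi)>0$, then $\xi\in\mathcal G$. This holds because in \eqref{mildfnsscalar} one has $H(\eta|\xi)>0$ and $\sin(\theta_{\xi,\eta}-\theta_{\xi,\xi-\eta})\ne0$ for a.e.\ $\eta$, while $f(\eta,\cdot),f(\xi-\eta,\cdot)>0$ on the positive-measure set $E\cap(\xi-E)$, so the inner $\eta$-integral is positive, and by the monotonicity it remains positive for every $s\in(0,t)$, forcing $f(\xi,t)>0$.

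\textbf{Finishing (c), and (d).} Since $E_0$ is bounded, symmetric and of positive measure, $\1_{E_0}*\1_{E_0}$ is continuous and equals $m(E_0)>0$ at the origin, hence positive on some ball $B(0,\delta_0)$; by the propagation property $B(0,\delta_0)\subseteq\mathcal G$, and convolving two balls of radius $2^{n-1}\delta_0$ gives $B(0,2^n\delta_0)\subseteq\mathcal G$ for all $n$, so $\mathcal G=\R^2\setminus\{0\}$, which is (c). For (d): if $t>t_c$ then by (b) the interval $\{t:f(\cdot,t)<\infty\text{ a.e.}\}$ omits $t$, so for any $r\in(t_c,t)$ the set $B_r=\{\eta:f(\eta,r)=\infty\}$ has positive measure. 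For $s\in(0,t-r)$ and $\eta\in B_r$ the monotonicity gives $f(\eta,t-s)=\infty$, while (c) gives $f(\xi-\eta,t-s)>0$ for $\eta\ne\xi$, and $\sin(\theta_{\xi,\eta}-\theta_{\xi,\xi-\eta}),H(\eta|\xi)>0$ a.e.; hence the inner integral in \eqref{mildfnsscalar} is $+\infty$ for all $s\in(0,t-r)$, and integrating over this interval of positive length forces $f(\xi,t)=\infty$ for every $\xi\ne0$.

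\textbf{Main obstacle.} The substantive content is the propagation-and-filling step in (c); parts (a), (b), (d) are essentially bookkeeping once the integral equation and its monotonicity are in hand. The only care needed is tracking the null sets where $H(\cdot|\xi)$ or the $\sin$-factor degenerate, and restricting to a bounded $E_0$ so that the Steinhaus-type argument (continuity of $\1_{E_0}*\1_{E_0}$) genuinely applies.
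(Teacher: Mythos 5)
Your proof is correct and follows essentially the same route as the paper: the scalar recursion for $|\X|\1_{[S>t]}$ via Propositions \ref{cased=2} and \ref{Xclosedform}, monotonicity of $e^{|\xi|^{2\gamma}t}f$, the Fubini/slice argument for (b), positivity propagation through the convolution in the nonlinearity for (c), and the lower bound by an infinite inner integral for (d). The only cosmetic difference is in the filling step of (c), where you use the Steinhaus-type continuity of $\1_{E_0}*\1_{E_0}$ and a doubling of balls, while the paper uses continuity of translation in $L^1$ and an induction over discs $D_{n\epsilon}$ of fixed increment; these rest on the same fact and both cover $\R^2\setminus\{0\}$.
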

\begin{proof}
(a) By \eqref{eq:822191} and \eqref{odot},
\begin{equation}\label{|X|}|\X(\xi,t)|=\left\{ \begin{array}{*{35}{l}}
   |\chi_0(\xi)| & \text{if} & {{Y}_\root}> t,  \\
   \frac{1}{2}|{\X}^{(1)}(W_1, t-{{Y}_\root})||{\X}^{(2)}(W_2, t-{{Y}_\root})||\sin(\theta_{\xi,W_1}-\theta_{\xi,W_2})| & \text{if} & {{Y}_\root}\le t.  \\
\end{array} \right.\end{equation}
Taking the expectation of \eqref{|X|}, one obtains \eqref{mildfnsscalar}.\\\\
(b) Let
\begin{eqnarray}
\nonumber E&=&\{(\xi,t)\in\R^2\times(0,t_c):f(\xi,t)=\infty\},\\
\label{et}E_t&=&\{\xi\in\R^2:f(\xi,t)=\infty\}.
\end{eqnarray}
By the monotonicity of $e^{|\xi|^{2\gamma}t}f(\xi,t)$ as proven in Part (a), $\{E_t\}_{t>0}$ is an increasing family. That is, $E_t\subset E_s$ if $t<s$. 
Suppose by contradiction that $m(E_{t_0})>0$ for some $t_0\in(0,t_c)$. The monotonicity of $\{E_t\}_{t>0}$ implies $m(E_s)>0$ for all $s\in(t_0,t_c)$. This contradicts the definition of $t_c$. Therefore, $m(E_t)=0$ for all $t\in(0,t_c)$. By Fubini's Theorem, we have
\[m(E)=\int_0^{t_c}\int_{\R^2}\1_E(\xi,t)d\xi dt=\int_0^{t_c}m(E_t)dt=0.\]
(c) Assume \eqref{sym}. Since $A$ is of positive measure, there exists a number $M>0$ such that the set $B=\{\xi\in A\cup(-A):|\xi|\le M\}$ is also of positive measure. For any $\zeta\in\R^2$, denote
\[\zeta+B=\{\zeta+\xi:\xi\in B\}.\]
For each $r>0$, let $D_r=\{\xi\in\R^2:|\xi|<r\}$. By the continuity of the translation operator in $L^1(\R^2)$ (see e.g.\ \cite[Exer.\ 12, p.\ 132]{rudin}),
\[\lim_{\zeta\to 0}\int_{\R^2}|\1_{\zeta+B}(\xi)-\1_{B}(\xi)|d\xi=0.\]
Equivalently, $m((\zeta+B)\backslash B)+m(B\backslash(\zeta+B))\to 0$ as $\zeta\to 0$. This implies $m(B\cup (\zeta+B))=m(B)+m((\zeta+B)\backslash B)\to m(B)$ as $\zeta\to 0$. Hence,
\[m(B\cap(\zeta+B))=m(B)+m(\zeta+B)-m(B\cup(\zeta+B))=2m(B)-m(B\cup(\zeta+B))\to m(B)\]
as $\zeta\to 0$. Thus, there exists $\epsilon>0$ such that $m(B\cap(\zeta+B))>0$ for all $\zeta\in\R^2,|\zeta|<\epsilon$. Let
\[\tilde{E}=\{\xi\in\R^2:v(\xi,t)>0\ \ \forall\, t>0\}.\]
To show that $\tilde{E}=\R^2$, we will show by induction on $n\in\mathbb{N}$ that $D_{n\epsilon}\subset \tilde{E}$.

For any $\xi\in D_\epsilon$ and $t>0$, \eqref{mild|nse|} implies
\begin{equation}\label{73251}v(\xi,t)\ge \frac{c_0}{2}\int_0^t \int_{B\cap(\xi+B)} |\xi|e^{-|\xi|^{2\gamma}s} v(\eta,t-s) v(\xi-\eta,t-s)|\sin(\theta_{\xi,\eta}-\theta_{\xi,\xi-\eta})| d\eta ds.\end{equation}
For any $\eta\in B\cap(\xi+B)$, we have $\xi-\eta\in B\cap(\xi+B)$ and thus, $v_0(\eta)>0$ and $v_0(\xi-\eta)>0$. Then $v(\eta,s)\ge v_0(\eta)e^{-|\eta|^{2\gamma}s}>0$ and $v(\xi-\eta,s)\ge v_0(\xi-\eta)e^{-|\xi-\eta|^{2\gamma}s}>0$ for any $s>0$. Then \eqref{73251} implies that $v(\xi,t)>0$, which implies $D_\epsilon\subset \tilde{E}$. 

Now suppose that $D_{n\epsilon}\subset \tilde{E}$ for some $n\in\mathbb{N}$. Let $\xi\in D_{(n+1)\epsilon}$. Note that the set $D=D_{n\epsilon}\cap (\xi+D_\epsilon)$ is of positive measure. By \eqref{mildfnsscalar},
\begin{equation}\label{73252}v(\xi,t)\ge \frac{c_0}{2}\int_0^t \int_{D} |\xi|e^{-|\xi|^{2\gamma}s} v(\eta,t-s) v(\xi-\eta,t-s)|\sin(\theta_{\xi,\eta}-\theta_{\xi,\xi-\eta})| d\eta ds.\end{equation}
For any $\eta\in D$, we have $\eta\in D_{n\epsilon}\subset\tilde{E}$. On the other hand, the fact that $\eta\in (\xi+D_{\epsilon})$ implies $\xi-\eta\in D_\epsilon \subset \tilde{E}$. Hence, $v(\eta,s)>0$ and $v(\xi-\eta,s)>0$ for any $s>0$. By \eqref{73252}, $v(\xi,t)>0$. We have shown that $D_{(n+1)\epsilon}\subset \tilde{E}$.\\\\
(d) Suppose that $t_c<\infty$ and that \eqref{sym} is satisfied. Fix $t>t_c$ and let $\tau=(t+t_c)/2$. Since the family $\{E_s\}_{s>0}$ defined by \eqref{et} is increasing and $t_c<\tau<\infty$, we have $m(E_\tau)>0$. On the other hand, by Part (c), one has $v(\zeta,s)>0$ for any $\zeta\in\R^2$ and $s>0$. Hence, for any $\xi\in\R^2$ and $\eta\in E_\tau$, we have $v(\eta,s)=\infty$ and $v(\xi-\eta,s)>0$ for any $\tau\le s\le t$. Consequently,
\[v(\xi,t)\ge \frac{c_0}{2}\int_0^{t-\tau} \int_{E_\tau} |\xi|e^{-|\xi|^{2\gamma}s} v(\eta,t-s) v(\xi-\eta,t-s)|\sin(\theta_{\xi,\eta}-\theta_{\xi,\xi-\eta})| d\eta ds=\infty.\]
\end{proof}
In the following, we show that $t_c$ is finite for sufficiently large initial data.
\begin{prop}\label{blowup}
Let $d=2$ and $\gamma\in(1/2,1)$, and let $u_0:\R^2\to \R^2$ be a divergence-free vector field. Let $\chi_0(\xi)=c_{0}\frac{\hat{u}_0(\xi)}{h(\xi)}$ and $\bX$ be the minimal solution process defined by \eqref{eq:822191}. Suppose
\[M=\inf_{4\le|\xi|\le 7}|\hat{u}_0(\xi)|>500\cdot 7^{2\gamma}.\]
Then
\[t_c\le T_*:=-7^{-2\gamma}\ln\left(1-\frac{500\cdot 7^{2\gamma}}{M}\right).\]
% Then the set 
% \[D=\{(\xi,t)\in\R^2\times(0,T_*):\ \EXP[|\X(\xi,t)|\1_{[S>t]}]=\infty\}\] 
% where
% \[T_*= -7^{-2\gamma}\ln\left(1-\frac{500\cdot 7^{2\gamma}}{M}\right)\]
% is of positive measure. Consequently, $t_c\le T_*$.
\end{prop}
\begin{proof}%[Proof of \autoref{blowup}]
Let $v(\xi,t)=\frac{h(\xi)}{c_{0}}\EXP[|\X(\xi,t)|\1_{[S>t]}]$ and $v_0(\xi)=|\hat{u}_0(\xi)|$. By \autoref{tprop}(a), one gets that for all $t>0$,
\begin{equation}\label{mild|nse|}
v(\xi,t)=v_0(\xi)e^{-|\xi|^{2\gamma}t}+\frac{c_0}{2}\int_0^t \int_{\R^2} |\xi|e^{-|\xi|^{2\gamma}s} v(\eta,t-s) v(\xi-\eta,t-s)|\sin(\theta_{\xi,\eta}-\theta_{\xi,\xi-\eta})| d\eta ds.
\end{equation}
Note that $v_0(\xi)\ge M$ for $4\le|\xi|\le 7$. Suppose by contradiction that $t_c>T_*$. By \autoref{tprop}(b),  we have that for each $t\in(0,T_*)$, 
\begin{equation}\label{56255}v(\xi,t)<\infty,\ \ \ \text{a.e.}\ \xi\in\R^2.
\end{equation}
Let
\[A_\xi=\{\eta\in\R^2:\ 4\le|\eta|\le 5,\ 6\le|\xi-\eta|\le 7\}.\]
Then for any $\xi$ such that $4\le|\xi|\le 7$, one has
\begin{equation}\label{56251}
v(\xi,t)\ge e^{-7^{2\gamma}t}M+\int_0^t \int_{A_\xi} \frac{1}{\pi}e^{-7^{2\gamma}s} v(\eta,t-s) v(\xi-\eta,t-s)|\sin(\theta_{\xi,\eta}-\theta_{\xi,\xi-\eta})| d\eta ds.
\end{equation}
Let $\beta=7^{2\gamma}$. Multiplying both sides of \eqref{56251} by $e^{\beta t}$, we get
\begin{equation}\label{56252}
e^{\beta t}v(\xi,t)\ge M+\int_0^t \int_{A_\xi} \frac{1}{\pi}e^{\beta(t-s)} v(\eta,t-s) v(\xi-\eta,t-s)|\sin(\theta_{\xi,\eta}-\theta_{\xi,\xi-\eta})| d\eta ds
\end{equation}
for any $\xi$ such that $4\le|\xi|\le 7$. Next, we find lower bounds for $|A_\xi|$ and $|\sin(\theta_{\xi,\eta}-\theta_{\xi,\xi-\eta})|$ provided that 
\[4\le|\xi|\le 7,\ 4\le|\eta|\le 5,\ 6\le|\xi-\eta|\le 7.\]

\subsubsection*{Estimate of $|\sin(\theta_{\xi,\eta}-\theta_{\xi,\xi-\eta})|$:}

Let $\phi_1=\sin\theta_{\xi,\eta}\in[0,\pi]$, $\phi_2=\sin\theta_{\xi,\xi-\eta}\in[0,\pi]$, $a=|\eta|$, $b=|\xi-\eta|$, $c=|\xi|$. Label points $A,B,C,D$ as in \autoref{triangle}.
\begin{figure}[h!]
\centering\includegraphics[scale=.8]{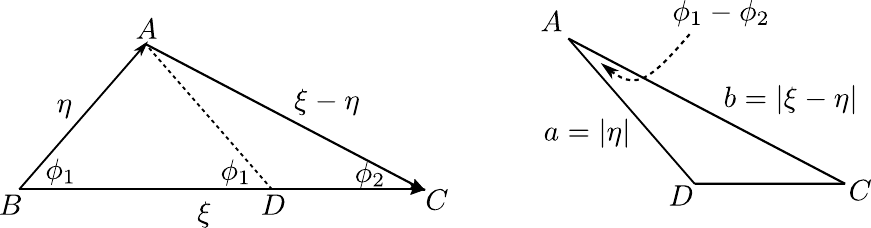}
\caption{}
\label{triangle}
\end{figure}
By the triangle inequality, $DC>AC-AD=|\xi-\eta|-|\eta|\ge 6-5=1$. By the Sine Law in triangle $ADC$,
\[|\sin(\phi_1-\phi_2)|=\frac{2\text{area}(ADC)}{ab}\ge\frac{2\text{area}(ADC)}{5\times 7}=\frac{2\text{area}(ADC)}{35}.\]
On the other hand,
\[\frac{\text{area}(ADC)}{\text{area}(ABC)}=\frac{DC}{BC}\ge\frac{1}{7}.\]
Combining the two above inequalities, one gets
\[|\sin(\phi_1-\phi_2)|\ge\frac{2}{245}\text{area}(ABC)\]
By Heron's formula,
\[\text{area}(ABC)=\frac{1}{4}\sqrt{(a+b+c)(a+b-c)(a+c-b)(b+c-a)}.\]
Under the conditions $4\le a\le 5$, $6\le b\le 7$, $4\le c\le 7$, we have
\[\text{area}(ABC)\ge\frac{1}{4}\sqrt{(4+6+4)(4+6-7)(4+4-7)(6+4-5)}=\frac{\sqrt{210}}{4}.\]
Thus,
\begin{equation}\label{56253}|\sin(\phi_1-\phi_2)|\ge\frac{2}{245}\frac{\sqrt{210}}{4}=\frac{\sqrt{210}}{490}>\frac{1}{40}.\end{equation}

\subsubsection*{Estimate of $|A_\xi|$:}

For the purpose of estimating $|A_\xi|$, we can assume that $\xi$ has coordinates $(c=|\xi|,0)$. Label the points $O(0,0)$, $F(c,0)$, $E\left(-\frac{12}{c}+\frac{c}{2},\sqrt{20-\left(-\frac{12}{c}+\frac{c}{2}\right)^2}\right)$ as in \autoref{circles}.
\begin{figure}[h!]
\centering\includegraphics[scale=.7]{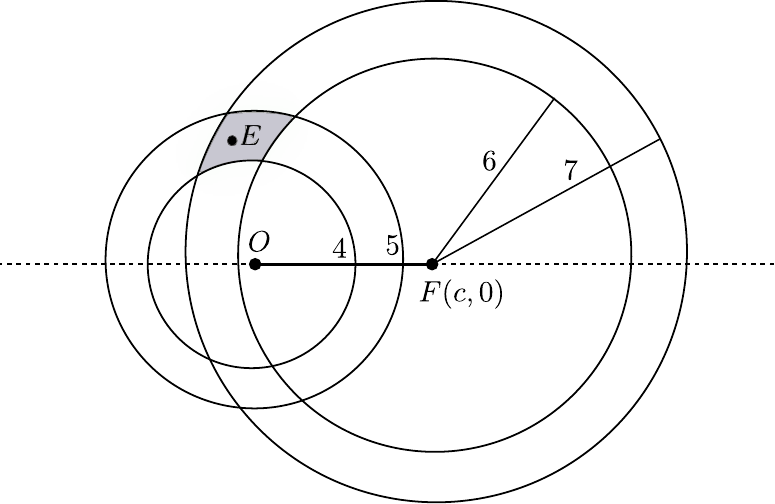}
\caption{}
\label{circles}
\end{figure}
Then
\begin{eqnarray*}
OE&=&\sqrt{\left(-\frac{12}{c}+\frac{c}{2}\right)^2+20-\left(-\frac{12}{c}+\frac{c}{2}\right)^2}=\sqrt{20},\\
EF&=&\sqrt{\left(-\frac{12}{c}-\frac{c}{2}\right)^2+20-\left(-\frac{12}{c}+\frac{c}{2}\right)^2}=\sqrt{44}.
\end{eqnarray*}
For any $\eta\in B_{3/10}(E)$,
\begin{eqnarray*}
|\eta|&\le& OE+|\overrightarrow{OE}-\eta|\le\sqrt{20}+\frac{3}{10}<5,\\
|\eta|&\ge& OE-|\overrightarrow{OE}-\eta|\le\sqrt{20}-\frac{3}{10}>4,\\
|\xi-\eta|&\le& EF+|\overrightarrow{OE}-\eta|\le\sqrt{44}+\frac{3}{10}<7,\\
|\xi-\eta|&\ge& EF-|\overrightarrow{OE}-\eta|\le\sqrt{44}-\frac{3}{10}>6.\\
\end{eqnarray*}
Therefore, $B_{3/10}(E)\subset A_\xi$. Consequently,
\begin{equation}\label{56254}
|A_\xi|\ge |B_{3/10}|=\pi\left(\frac{3}{10}\right)^2=\frac{9\pi}{100}.
\end{equation}
Substituting the estimate \eqref{56253} into \eqref{56252}, we get
\begin{eqnarray*}
e^{\beta t}v(\xi,t)\ge M+\frac{1}{40\pi}\int_0^t \int_{A_\xi} e^{\beta(t-s)} v(\eta,t-s) v(\xi-\eta,t-s)|\sin(\theta_{\xi,\eta}-\theta_{\xi,\xi-\eta})| d\eta ds.
\end{eqnarray*}
Let $q(t)=\inf_{4\le |\xi|\le 7}e^{\beta t}v(\xi,t)$. Because of \eqref{56255}, $q(t)<\infty$ for all $t\in(0,T_*)$. The inequality above together with \eqref{56254} implies
\begin{eqnarray*}
q(t)&\ge& M+\frac{1}{40}\int_0^t \int_{A_\xi} e^{-\beta(t-s)} q^2(t-s) d\eta ds\\
&=& M+\frac{1}{40\pi}|A_\xi|\int_0^t e^{-\beta(t-s)} q^2(t-s) ds\\
&\ge& M+\frac{1}{450}\int_0^t e^{-\beta s} q^2(s) ds
\end{eqnarray*}
Then $q(t)\ge p(t)$, where $p(t)$ is the solution to the integral equation
\[p(t)=M+\frac{1}{450}\int_0^t e^{-\beta s} p^2(s) ds.\]
Differentiating both sides, we get $p'=\frac{1}{450}e^{-\beta t}p^2$, which has an explicit solution
\[p(t)=\frac{1}{\frac{1}{M}-\frac{1}{450\beta}(1-e^{-\beta t})}.\]
This function blows up at
\[\tau=-\frac{1}{\beta}\ln\left(1-\frac{450\beta}{M}\right).\]
Notice that $\tau<T_*$. For any $t<\tau$, we have
\begin{eqnarray*}
q(\tau)&\ge& M+\frac{1}{450}\int_0^{\tau} e^{-\beta s} q^2(s) ds\\
&\ge& M+\frac{1}{450}\int_0^{t} e^{-\beta s} q^2(s) ds\\
&\ge& M+\frac{1}{450}\int_0^{t} e^{-\beta s} p^2(s) ds\\
&=&p(t).
\end{eqnarray*}
Thus, $q(\tau)\ge \limsup_{t\to\tau^-} p(t)=\infty$. This is a contradiction.
\end{proof}
The fact that the solution process $\X(\xi,t)$ fails to be integrable after finite time for sufficiently large initial condition does not necessarily imply that the mild solution $u$ to \ref{nsedg} blows up after finite time. It only implies that the probabilistic representation of the solution to the scaled Fourier transform $\chi=c_{0}\frac{\hat{u}}{h}$ ceases to be valid after finite time. It is possible, given a certain symmetry of the initial data that enables stochastic cancellations, to extend the solution beyond the critical time $t_c$. The rest of the section is to demonstrate this idea.

Recall the family of signs $\{\Sigma_v\}_{v\in\T}$ and the family of angles $\{\Theta_v\}_{v\in\T}$ given in \autoref{iidfamilies}. Consider a transformation $(\{\Sigma_v\}_{v\in\T},\{\Theta_v\}_{v\in\T})\mapsto (\{\tilde{\Sigma}_v\}_{v\in\T},\{\tilde{\Theta}_v\}_{v\in\T})$ in which 
\begin{eqnarray}
\label{trans1}\tilde{\Theta}_v&=&\Theta_v\ \ \ \forall v\in \T,\\
\label{trans2}{{\tilde{\Sigma }}_{v}}&=&\left\{ \begin{array}{*{35}{rcl}}
   -{{\Sigma }_{v}} & \text{if} & v=\root   \\
   {{\Sigma }_{v}} & \text{if} & v\in \T\backslash \{\root \}.  \\
\end{array} \right.
\end{eqnarray}
This transformation induces a transformation 
$\{W_v\}_{v\in\T}\to \{\tilde{W}_v\}_{v\in\T}$ in which 
$\tilde{W}_\emptyset=W_\emptyset=\xi$ and $|\tilde{W}_v|=|W_v|$ for all $v\in\T$. 
The DSY cascade $\{Y_v=|W_v|^{-2\gamma}T_v\}_{v\in\T}$ is then transformed into 
$\{\tilde{Y}_v=|\tilde{W}_v|^{-2\gamma}T_v=Y_v\}_{v\in\T}$. The explosion time 
is transformed into
\[\tilde{S}=\underset{n\ge 0 }{\mathop{\sup }}\,
\underset{|v|=n}{\mathop{\min }}\,\sum\limits_{j=0}^{n} \tilde{Y}_{v|j}
=\underset{n\ge 0 }{\mathop{\sup }}\,
\underset{|v|=n}{\mathop{\min }}\,\sum\limits_{j=0}^{n} {Y}_{v|j}=S.\]

%\textcolor{blue}
To use {\autoref{iidfamilies} %provides a representation of the solution process in terms of the random geometry identified there. 
and the transformations \eqref{trans1}-\eqref{trans2} to explore the possibilities for stochastic cancellation, we consider the well-known simple vortex model, see e.g. \cite[p.\ 48]{majda}. A vortex flow is a flow in which the stream function is radially symmetric (i.e.\ all streamlines are concentric circles). In the Fourier space, a vortex flow $u_0$ has the form $\hat{u}_0(\xi)=U(|\xi|)(-\xi_2,\xi_1)^T$, or equivalently, 
\begin{equation}\label{vortex}
\chi_0(\xi)=g(|\xi|)e_{\xi^\perp}.
\end{equation}
It is known that a flow evolving from a vortex flow remains a vortex flow for all time. See e.g.\ \cite[Example 2.2, p.\ 48]{majda}, \cite[Sec.\ 4.5]{batchelor}, \cite[Chap.\ 13]{saffman}. We will show below that the same result holds for the fractional Navier-Stokes equations. 

%The stream function corresponding to the initial velocity is radially symmetric, and thus,
%in this case $\hat{u}_0(x) = h(|\xi|) (-\xi_2, \xi_1)^t $.  We will refer to this data as a \textit{vortex} and note that after scaling, $\chi_0(\xi) = \rho(|\xi|) e_{\xi^\perp}$ for an appropriate complex valued function $\rho$.  \autoref{signflip} reveals a stochastic asymmetry that results in substantial cancellations in the proof of \autoref{radial1} for this type of initial data.
%}

\begin{prop}\label{signflip}
Let $\tilde{\X}(\xi,t)=\phi_{\xi,t}(\{\tilde{\Sigma}_v\}_{v\in\T},
\{\tilde{\Theta}_v\}_{v\in\T})$ where $\{\tilde{\Sigma}_v\}_{v\in\T}$ 
and $\{\tilde{\Theta}_v\}_{v\in\T}$ are given by \eqref{trans1}-\eqref{trans2}. 
Suppose 
the vector field $\chi_0$ is given by \eqref{vortex}. %i.e.} the function 
%$\chi_0(\eta)\cdot e_{\eta^\perp}$ is radially symmetric, i.e.\ %only depending on 
%$|\eta|$. 
Then 
\begin{enumerate}[(a)]
\item $\X\1_{[T_\root<t<S]}=-\tilde{\X}\1_{[T_\root<t<{S}]}$.
\item If $\EXP[|\X(\xi,t)|\1_{[T_\root<t<S]}]<\infty$ then $\EXP[\X(\xi,t)\1_{[T_\root<t<S]}]=0$.
\end{enumerate}
\end{prop}
\begin{proof}
To show Part (a), notice from \eqref{closedform} that
\begin{equation}
\X\1_{S>t}=\left(\frac{i}{2}\right)^{|\Vo(\xi,t)|}\left(\prod_{v\in \Vo(\xi,t)}\Sigma_v\sin(\Theta_{v1}-\Theta_{v2})\right)\left(\prod_{v\in\partial V(\xi,t)}g(|W_v|)\right)e_{\xi^\perp}
\end{equation}
where $g(\eta)=\chi_0(\eta)\cdot e_{\eta^\perp}$. On the event $[T_\root<t<S]$, one has $\root\in\Vo(\xi,t)$. Using the fact that $\tilde{\Sigma}_\root=-\Sigma_\root$, one has
\begin{equation}
\tilde{\X}\1_{[T_\root<t<S]}=\left(\frac{i}{2}\right)^{|\Vo(\xi,t)|}\left(\prod_{v\in \Vo(\xi,t)}\tilde{\Sigma}_v\sin(\Theta_{v1}-\Theta_{v2})\right)\left(\prod_{v\in\partial V(\xi,t)}g(|W_v|)\right)e_{\xi^\perp}=-\X\1_{[T_\root<t<S]}.
\end{equation}
To show (b), notice that the family $\{\tilde{\Sigma}_v\}_{v\in\T}$ has the same distribution as $\{{\Sigma}_v\}_{v\in\T}$. Assuming the integrability condition $\EXP[|\X(\xi,t)|\1_{[T_\root<t<S]}]<\infty$, one has
\[\EXP[\X(\xi,t)\1_{[T_\root<t<S]}]=\EXP[\tilde{\X}(\xi,t)\1_{[T_\root<t<S]}].\]
Together with Part (a), one infers that $\EXP[\X(\xi,t)\1_{[T_\root<t<S]}]=0$.
\end{proof}

%The radial symmetry of the initial data and the invariance of the DSY process leads to the following consequence, also noted in \cite[p 48]{majda} in the physical space setting.

\begin{cor}\label{radial1}
Suppose $u_0:\R^2\to \R^2$ is a vortex. Then \ref{nsedg} has a global solution 
$u$ whose Fourier transform is given by 
$\hat{u}(\xi,t)=\hat{u}_0(\xi)e^{-|\xi|^{2\gamma}t}$.
\end{cor}

\begin{proof}
To exploit asymmetric cancellations, consider the simple
arithmetic average
$${\X}^* = \frac{1}{2}(\X + \tilde{\X}).$$
Clearly, $\hat{u}(\xi,t)= \mathbb{E}\X^*(\xi,t)=\EXP\bX(\xi,t)$ is a solution
for $t<t_c=t_c(\chi_0)$. In view of \autoref{signflip}, one 
has
$${\X}^*(\xi,t) = \hat{u}_0(\xi)\1_{[T_\root>t]}.$$
Thus, $u(\xi,t) = \hat{u}_0(\xi)e^{-|\xi|^{2\gamma}t}$
is a solution for $t< t_c$.  However, one may readily
check that the validity of this mild solution extends to all times
$t$ beyond $t_c$ as well.
\end{proof}

% \begin{remark}
%     \autoref{radial1} can also be proven analytically in Fourier space.
%     For convenience, the proof is provided in \autoref{appendix}.
%     However, our 
%     approach highlights the role symmetries of DSY cascades play in providing 
%     cancellations necessary to prove global existence of solutions of \ref{nsedg}.
% \end{remark}

%{\color{blue} THE FOLLOWING NEEDS MORE SCRUTINY: As an application one may note the following implied symmetries in
%the solution process defined in Proposition \ref{Xclosedform}.  Recall that a {\it tree automorphism} of 
%a rooted binary tree is an isomorphism defined on the set of vertices
%that fixes the root and preserves edges. The collection of all tree automorphisms
%is a group.
%\begin{cor} The solution process defined by
%Proposition \ref{Xclosedform} is invariant under the group of
%tree-automorphisms.
%\end{cor}
%\begin{proof}  One observes that a change in sign of the
%cross product is accompanied by a corresponding change in
%sign of the difference of trigonometric sine functions. 
%\end{proof}
%}

%For example, if the initial velocity field $u_0$ is a \emph{vortex}, i.e.\ the stream function is radially symmetric, then the flow evolves but remains a vortex flow (radial eddy) for all time. See e.g.\ \cite[Example 2.2, p.\ 48]{majda}, \cite[Sec.\ 4.5]{batchelor}, \cite[Chap.\ 13]{saffman}. Here, a function $f$ on $\R^2$ is said to be \emph{radially symmetric} if $f(x)$ only depends on $|x|$. %This result will be formulated and proven in .
%The same result holds for the fractional Navier-Stokes equations (\autoref{radial}).

\appendix
\section{Analytical Proof of \autoref{radial1}}\label{appendix}
If $u$ satisfies \ref{nsedg} then the Fourier transform $v=\hat{u}$ satisfies the equation:
\begin{equation}\label{mildnse1}
\textup{(FNSE)}_{d,\gamma}:\ \ \ v=F_1(v_0)+F_2(v,v)
\end{equation}
where $v_0=\hat{u}_0$,
\[F_1(f)=f(\xi)e^{-|\xi|^{2\gamma}t},\ \ \ F_2(f,g)=\int_0^t |\xi|e^{-|\xi|^{2\gamma}s}\int_{\Rd}f(\eta,t-s)\odot_\xi g(\xi-\eta,t-s)d\eta ds\]
A mild solution to \eqref{mildnse1} is defined as the limit of the Picard iteration 
\begin{equation}\label{mildnsePicard}
v^{(n+1)}=F_1(v_0)+F_2(v^{(n)},v^{(n)})
\end{equation}
where $v^{(0)}(\xi,t)\equiv 0$.
\begin{prop}\label{radial}
Suppose $u_0:\R^2\to \R^2$ is a vortex. Then \ref{nsedg} has a global solution $u$ whose Fourier transform is given by $\hat{u}(\xi,t)=F_1(\hat{u}_0)=\hat{u}_0(\xi)e^{-|\xi|^{2\gamma}t}$.
\end{prop}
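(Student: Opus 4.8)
The plan is to verify directly that the linear (fractional-heat) flow is a fixed point of the Picard scheme \eqref{mildnsePicard}, the nonlinear correction vanishing identically. First I would unpack the vortex hypothesis. Writing $u_0=\nabla^\perp\psi_0$ with $\psi_0$ radially symmetric gives, in Fourier variables, $\hat u_0(\xi)=i\hat\psi_0(\xi)\,\xi^\perp=i|\xi|\,\Psi(|\xi|)\,e_{\xi^\perp}$ for a scalar function $\Psi$, since $\hat\psi_0$ is radial. In particular $\hat u_0(\xi)$ is parallel to $e_{\xi^\perp}$ (so $\xi\cdot\hat u_0(\xi)=0$, consistent with $\mathrm{div}\,u_0=0$) and, crucially, the scalar $\hat u_0(\xi)\cdot e_{\xi^\perp}=i|\xi|\Psi(|\xi|)$ depends only on $|\xi|$. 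Set $v(\xi,t):=F_1(\hat u_0)=\hat u_0(\xi)e^{-|\xi|^{2\gamma}t}$. Since the iteration \eqref{mildnsePicard} starts from $v^{(0)}\equiv 0$ and produces $v^{(1)}=F_1(\hat u_0)=v$, it suffices to prove $F_2(v,v)\equiv 0$: then $v^{(n)}=v$ for all $n\ge 1$ by induction, the iteration is eventually constant, and its limit $v$ — defined for all $t\ge 0$ — is the global mild solution of \eqref{mildnse1}, whose inverse Fourier transform is $u=e^{-t(-\Delta)^\gamma}u_0$, divergence-free and solving \ref{nsedg} (the pressure recovered, as usual, through the Leray projection). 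This also makes precise the claim at the end of the proof of \autoref{radial1} that the solution continues past $t_c$.

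Next I would compute $F_2(v,v)$ using the two-dimensional product formula of \autoref{cased=2}. Since $v(\cdot,\tau)$ satisfies the orthogonality condition for every $\tau\ge 0$, \eqref{odot} applies and yields, for fixed $\xi\neq 0$ and $0<s<t$,
\[
v(\eta,t-s)\odot_\xi v(\xi-\eta,t-s)=\frac{i}{2}\,\mathrm{sign}(\xi\times\eta)\,\big(v(\eta,t-s)\cdot e_{\eta^\perp}\big)\big(v(\xi-\eta,t-s)\cdot e_{(\xi-\eta)^\perp}\big)\sin\!\big(\theta_{\xi,\eta}-\theta_{\xi,\xi-\eta}\big)e_{\xi^\perp}.
\]
Because $v(\eta,t-s)\cdot e_{\eta^\perp}=\big(\hat u_0(\eta)\cdot e_{\eta^\perp}\big)e^{-|\eta|^{2\gamma}(t-s)}$ depends only on $|\eta|$ (and likewise for the $\xi-\eta$ factor), every factor above is a function of the triple $\big(|\eta|,\,|\xi-\eta|,\,\theta_{\xi,\eta}-\theta_{\xi,\xi-\eta}\big)$ except $\mathrm{sign}(\xi\times\eta)$. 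Now let $\sigma_\xi$ be the reflection of $\R^2$ across the line $\R\xi$: it is a measure-preserving isometry fixing $\xi$, so $|\sigma_\xi\eta|=|\eta|$, $\xi-\sigma_\xi\eta=\sigma_\xi(\xi-\eta)$ whence $|\xi-\sigma_\xi\eta|=|\xi-\eta|$, the unsigned angles $\theta_{\xi,\sigma_\xi\eta}=\theta_{\xi,\eta}$ and $\theta_{\xi,\xi-\sigma_\xi\eta}=\theta_{\xi,\xi-\eta}$ are unchanged, while $\xi\times\sigma_\xi\eta=-(\xi\times\eta)$. Hence the integrand is odd under $\eta\mapsto\sigma_\xi\eta$, and the change of variables $\eta\mapsto\sigma_\xi\eta$ gives $\int_{\R^2}v(\eta,t-s)\odot_\xi v(\xi-\eta,t-s)\,d\eta=0$; integrating in $s$ yields $F_2(v,v)\equiv 0$.

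This is the deterministic counterpart of the cascade cancellation of \autoref{signflip}: there the sign flip $\Sigma_\emptyset\mapsto-\Sigma_\emptyset$ annihilates $\EXP[\X\1_{T_\emptyset<t<S}]$; here the reflection $\sigma_\xi$ annihilates the first — and therefore every — Picard correction. The main obstacle is the bookkeeping in the second step: one must choose the correct involution, namely the reflection across the axis $\R\xi$ rather than across the perpendicular bisector of $[0,\xi]$ (the latter swaps $\eta$ and $\xi-\eta$ and flips both $\mathrm{sign}(\xi\times\eta)$ and $\sin(\theta_{\xi,\eta}-\theta_{\xi,\xi-\eta})$, leaving the integrand invariant and giving nothing), and verify that each remaining factor in \eqref{odot} is genuinely $\sigma_\xi$-invariant. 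A secondary, routine point is justifying the change of variables: for $u_0$ in the standard classes where the mild formulation is well-posed (e.g.\ $\hat u_0\in PM^a$ with $a<2$) the bilinear integral is absolutely convergent, and otherwise one truncates symmetrically with respect to $\sigma_\xi$ and passes to the principal value.
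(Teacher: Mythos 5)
Your proposal is correct and follows essentially the same route as the paper: use \autoref{cased=2} together with the radial symmetry of $\hat{u}_0(\xi)\cdot e_{\xi^\perp}$ (inherited from the radial stream function) to show $F_2(v,v)=0$ for $v=F_1(\hat{u}_0)$, and conclude that the Picard iteration stabilizes at the linear fractional-heat flow. The only difference is cosmetic: you obtain the cancellation in one step from oddness of the integrand under the reflection across $\R\xi$, whereas the paper first folds the integral onto the half-plane $\{\xi\times\eta>0\}$ and then applies the second involution $\eta\mapsto\xi+R_{\xi^\perp}\eta$; both are the same symmetry argument.
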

\begin{proof}
Let $\psi$ be a stream function, i.e.\ $\psi$ satisfies $u_0=\nabla^\perp \psi=(\psi_y,-\psi_x)$. Since $\psi$ is radially symmetric, so is $\hat{\psi}$. One can easily check that $\hat{u}_0(\xi)=(i\xi_2,-i\xi_1)\hat{\psi}(\xi)$. Thus, $\hat{u}_0(\xi)\cdot e_{\xi^\perp}=-i|\xi|\hat{\psi}(\xi)$ is also radially symmetric. 

Let $v_0=\hat{u}_0$. We will show that $F_2(v^{(n)},v^{(n)})=0$, where $v^{(n)}$ is the sequence defined by \eqref{mildnsePicard}. It suffices to show that $F_2(f,f)=0$ if $f=f(\xi,t)$ is a function such that $f(\xi,t)\cdot e_{\xi^\perp}$ is radially symmetric in $\xi$. Let $f$ be such a function. By \autoref{cased=2},
\[F_2(f,f)(\xi,t)=\frac{i}{2}\int_0^t |\xi|e^{-|\xi|^{2\gamma}s}\int_{\R^2}\text{sign}(\xi\times\eta)g(\eta,t-s)g(\zeta,t-s)\sin(\theta_{\xi,\eta}-\theta_{\xi,\zeta})d\eta ds\]
where $\zeta=\xi-\eta$ and $g(\xi,t)=f(\xi,t)\cdot e_{\xi^\perp}$. Decompose the integral over $\R^2$ into an integral over $\{\eta:\xi\times\eta>0\}$ and an integral over $\{\eta:\xi\times\eta<0\}$ (see \autoref{planesplit}). 
\begin{figure}[h!]
\centering\includegraphics[scale=.8]{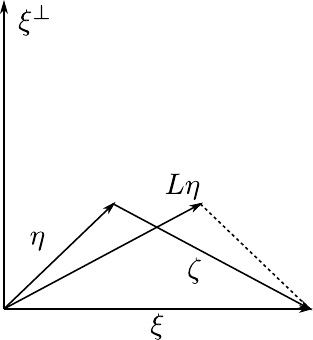}
\caption{}
\label{reflection}
\end{figure}
Then use the mirror reflection with respect to vector $\xi=(\xi_1,\xi_2)$, whose matrix representation is
\[R_\xi=\frac{1}{|\xi|^2}\left[ \begin{matrix}
   \xi _{1}^{2}-\xi _{2}^{2} & 2{{\xi }_{1}}{{\xi }_{2}}  \\
   2{{\xi }_{1}}{{\xi }_{2}} & \xi _{2}^{2}-\xi _{1}^{2}  \\
\end{matrix} \right],\]
as a change of variable, one obtains
\[F(f,f)(\xi,t)=i\int_0^t |\xi|e^{-|\xi|^{2\gamma}s}\int_{\xi\times\eta>0}g(\eta,t-s)g(\zeta,t-s)\sin(\theta_{\xi,\eta}-\theta_{\xi,\zeta})d\eta ds.\]
Let $L\eta=\xi+R_{\xi^\perp}\eta$ (see \autoref{reflection}). Note that 
\begin{eqnarray*}
&&|L\eta|=|R_{\xi^\perp}(\eta-\xi)|=|\eta-\xi|=|\zeta|,\\
&&|\xi-L\eta|=|R_{\xi^\perp}\eta|=|\eta|,\\
&&\theta_{\xi,L\eta}=\theta_{\xi,\zeta},\ \ \ \ \theta_{\xi,\xi-L\eta}=\theta_{\xi,\eta}.
\end{eqnarray*}
Using the change of variables $\eta\to L\eta$ and the fact that $g$ is radially symmetric, one gets
\[\int_{\xi\times\eta>0}g(\eta,t-s)g(\zeta,t-s)\sin(\theta_{\xi,\eta}-\theta_{\xi,\zeta})d\eta=-\int_{\xi\times\eta>0}g(\eta,t-s)g(\zeta,t-s)\sin(\theta_{\xi,\eta}-\theta_{\xi,\zeta})d\eta\]
Therefore, $F_2(f,f)=0$.
\end{proof}

%\section{Fractional Burgers equation}

\bibliography{References}

@book {majda,
    AUTHOR = {Majda, Andrew J. and Bertozzi, Andrea L.},
     TITLE = {Vorticity and Incompressible Flow},
    SERIES = {Cambridge Texts in Applied Mathematics},
    VOLUME = {27},
 PUBLISHER = {Cambridge University Press, Cambridge},
      YEAR = {2002},
     PAGES = {xii+545}
}

@article{chatterjee,
  title={Stochastic solutions of the wave equation},
  author={Chatterjee, Sourav},
  journal={arXiv preprint arXiv:1306.2382},
  year={2013}
}

@article{bakhtin,
  title={Solutions of semilinear wave equation via stochastic cascades},
  author={Bakhtin, Yuri and Mueller, Carl},
  journal={Commun. Stoch. Anal.},
  vol = {4(3)}, 
  pages = {425–431},
  year={2010}
}

@article{edengrowth,
  title={Phase transition in a generalized Eden growth model on a tree},
  author={Dean, David S and Majumdar, Satya N},
  journal={Journal of Statistical Physics},
  volume={124},
  number={6},
  pages={1351--1376},
  year={2006},
  publisher={Springer}
}

@article{lady,
  title={Sixth problem of the millennium: {N}avier-{S}tokes equations, existence and smoothness},
  author={Ladyzhenskaya, Olga Aleksandrovna},
  journal={Russian Mathematical Surveys},
  volume={58},
  number={2},
  pages={251},
  year={2003},
  publisher={IOP Publishing}
}

@book{constantin_foias,
  title={{N}avier-{S}tokes equations},
  author={Constantin, Peter and Foia{\c{s}}, Ciprian},
  year={1988},
  publisher={University of Chicago press}
}

@article{bradshaw2019algebraic,
  title={An algebraic reduction of the ‘scaling gap’in the {N}avier--{S}tokes regularity problem},
  author={Bradshaw, Zachary and Farhat, Aseel and Gruji{\'c}, Zoran},
  journal={Archive for Rational Mechanics and Analysis},
  volume={231},
  number={3},
  pages={1983--2005},
  year={2019},
  publisher={Springer}
}

@article{lemarie2002recent,
  title={Recent developments in the {N}avier-{S}tokes problem},
  author={Lemari{\'e}-Rieusset, Pierre Gilles},
  year={2002},
  publisher={CRC press}
}

@article{cheskidov2022sharp,
  title={Sharp nonuniqueness for the {N}avier--{S}tokes equations},
  author={Cheskidov, Alexey and Luo, Xiaoyutao},
  journal={Inventiones mathematicae},
  volume={229},
  number={3},
  pages={987--1054},
  year={2022},
  publisher={Springer}
}

@article{de2019infinitely,
  title={Infinitely many Leray--Hopf solutions for the fractional {N}avier--{S}tokes equations},
  author={De Rosa, Luigi},
  journal={Communications in Partial Differential Equations},
  volume={44},
  number={4},
  pages={335--365},
  year={2019},
  publisher={Taylor \& Francis}
}

@article {dascaliuc2019jan,
    AUTHOR = {Dascaliuc, Radu and Pham, Tuan N. and Thomann, Enrique},
     TITLE = {On {L}e {J}an-{S}znitman's stochastic approach to the
              {N}avier-{S}tokes equations},
   JOURNAL = {Trans. Amer. Math. Soc.},
  FJOURNAL = {Transactions of the American Mathematical Society},
    VOLUME = {377},
      YEAR = {2024},
    NUMBER = {4},
     PAGES = {2335--2365},
      ISSN = {0002-9947}
}

@article {dascaliuc2023errata,
    AUTHOR = {Dascaliuc, Radu and Pham, Tuan N. and Thomann, Enrique and
              Waymire, Edward C.},
     TITLE = {Erratum to ``{S}tochastic explosion and non-uniqueness for
              {$\alpha$}-{R}iccati equation'' [{J}. {M}ath. {A}nal. {A}ppl.
              476 (1) (2019) 53--85]},
   JOURNAL = {J. Math. Anal. Appl.},
  FJOURNAL = {Journal of Mathematical Analysis and Applications},
    VOLUME = {527},
      YEAR = {2023},
    NUMBER = {2},
     PAGES = {Paper No. 127420, 6},
      ISSN = {0022-247X}
}

@article {part1_2021,
    AUTHOR = {Dascaliuc, Radu and Pham, Tuan N. and Thomann, Enrique and
              Waymire, Edward C.},
     TITLE = {Doubly stochastic {Y}ule cascades ({P}art {I}): {T}he
              explosion problem in the time-reversible case},
   JOURNAL = {J. Funct. Anal.},
  FJOURNAL = {Journal of Functional Analysis},
    VOLUME = {284},
      YEAR = {2023},
    NUMBER = {1},
     PAGES = {Paper No. 109722, 25},
      ISSN = {0022-1236}
}

@article {part2_2021,
    AUTHOR = {Dascaliuc, Radu and Pham, Tuan N. and Thomann, Enrique and
              Waymire, Edward C.},
     TITLE = {Doubly stochastic {Y}ule cascades (part {II}): {T}he explosion
              problem in the non-reversible case},
   JOURNAL = {Ann. Inst. Henri Poincar\'{e} Probab. Stat.},
  FJOURNAL = {Annales de l'Institut Henri Poincar\'{e} Probabilit\'{e}s et
              Statistiques},
    VOLUME = {59},
      YEAR = {2023},
    NUMBER = {4},
     PAGES = {1907--1933},
      ISSN = {0246-0203}
}

@article{alphariccati,
	author = {Dascaliuc, Radu and Thomann, Enrique A. and Waymire, Edward C.},
	journal = {J. Math. Anal. Appl.},
	number = {1},
	pages = {53--85},
	title = {Stochastic explosion and non-uniqueness for {$\alpha$}-{R}iccati equation},
	volume = {476},
	year = {2019}}

@article{athreya,
	author = {Athreya, K. B.},
	journal = {Adv. in Appl. Probab.},
	number = {1},
	pages = {53--66},
	title = {Discounted branching random walks},
	volume = {17},
	year = {1985}}

@article{Aldous1998Diffusion,
	author = {Aldous, David and Shields, Paul},
	da = {1988/11/01},
	date-added = {2022-09-12 14:58:27 -0700},
	date-modified = {2022-09-13 08:23:35 -0700},
	doi = {10.1007/BF00318784},
	id = {Aldous1988},
	isbn = {1432-2064},
	journal = {Probability Theory and Related Fields},
	number = {4},
	pages = {509--542},
	title = {A diffusion limit for a class of randomly-growing binary trees},
	ty = {JOUR},
	url = {https://doi.org/10.1007/BF00318784},
	volume = {79},
	year = {1988},
	Bdsk-Url-1 = {https://doi.org/10.1007/BF00318784}}

@article{chaos,
	author = {Dascaliuc, Radu and Michalowski, Nicholas and Thomann, Enrique and Waymire, Edward C.},
	journal = {Chaos},
	number = {7},
	pages = {075402, 16},
	title = {Symmetry breaking and uniqueness for the incompressible {N}avier-{S}tokes equations},
	volume = {25},
	year = {2015}}

@article{lejan,
	author = {Le Jan, Y. and Sznitman, A. S.},
	journal = {Probab. Theory Related Fields},
	number = {3},
	pages = {343--366},
	title = {Stochastic cascades and {$3$}-dimensional {N}avier-{S}tokes equations},
	volume = {109},
	year = {1997}}

@article{rabi,
	author = {Bhattacharya, Rabi N. and Chen, Larry and Dobson, Scott and Guenther, Ronald B. and Orum, Chris and Ossiander, Mina and Thomann, Enrique and Waymire, Edward C.},
	journal = {Trans. Amer. Math. Soc.},
	number = {12},
	pages = {5003--5040},
	title = {Majorizing kernels and stochastic cascades with applications to incompressible {N}avier-{S}tokes equations},
	volume = {355},
	year = {2003}}

@article{mckean,
	author = {McKean, H. P.},
	journal = {Comm. Pure Appl. Math.},
	number = {3},
	pages = {323--331},
	title = {Application of {B}rownian motion to the equation of {K}olmogorov-{P}etrovskii-{P}iskunov},
	volume = {28},
	year = {1975}}

@article{smith,
	author = {Montgomery-Smith, Stephen},
	journal = {Proc. Amer. Math. Soc.},
	number = {10},
	pages = {3025--3029},
	title = {Finite time blow up for a {N}avier-{S}tokes like equation},
	volume = {129},
	year = {2001}}

@article{dean2005phase,
  title={Phase Transition in the {A}ldous-{S}hields Model of Growing Trees},
  author={Dean, David S and Majumdar, Satya N},
  journal={arXiv preprint cond-mat/0510429},
  year={2005}
}

@article{transformation2025,
  title={Transformation of Stochastic Recursions and Critical Phenomena in the Analysis of the {A}ldous-{S}hields-{A}threya Cascade and Related Mean Flow Equations},
  author={Dascaliuc, Radu and Pham, Tuan and Thomann, Enrique and Waymire, Edward},
  journal={arXiv preprint arXiv:2411.00629},
  year={2025}
}

@article{DA_EB_MC2022,
    AUTHOR = {Albritton, Dallas and Bru\'{e}, Elia and Colombo, Maria},
     TITLE = {Non-uniqueness of {L}eray solutions of the forced
              {N}avier-{S}tokes equations},
   JOURNAL = {Ann. of Math. (2)},
  FJOURNAL = {Annals of Mathematics. Second Series},
    VOLUME = {196},
      YEAR = {2022},
    NUMBER = {1},
     PAGES = {415--455},
      ISSN = {0003-486X}
}

@book {rudin,
    AUTHOR = {Rudin, Walter},
     TITLE = {Principles of mathematical analysis},
    SERIES = {International Series in Pure and Applied Mathematics},
   EDITION = {Third},
 PUBLISHER = {McGraw-Hill Book Co., New York-Auckland-D\"{u}sseldorf},
      YEAR = {1976},
     PAGES = {x+342}
}

@book {folland,
    AUTHOR = {Folland, Gerald B.},
     TITLE = {Real analysis},
    SERIES = {Pure and Applied Mathematics (New York)},
      NOTE = {Modern techniques and their applications,
              A Wiley-Interscience Publication},
 PUBLISHER = {John Wiley \& Sons, Inc., New York},
      YEAR = {1984},
     PAGES = {xiv+350},
      ISBN = {0-471-80958-6}
}

@book {gradshteyn,
    AUTHOR = {Gradshteyn, I. S. and Ryzhik, I. M.},
     TITLE = {Table of integrals, series, and products},
   EDITION = {Eighth},
      NOTE = {Translated from the Russian,
              Translation edited and with a preface by Daniel Zwillinger and
              Victor Moll,
              Revised from the seventh edition [MR2360010]},
 PUBLISHER = {Elsevier/Academic Press, Amsterdam},
      YEAR = {2015},
     PAGES = {xlvi+1133}
}

@book {saffman,
    AUTHOR = {Saffman, P. G.},
     TITLE = {Vortex dynamics},
    SERIES = {Cambridge Monographs on Mechanics and Applied Mathematics},
 PUBLISHER = {Cambridge University Press, New York},
      YEAR = {1992},
     PAGES = {xii+311},
      ISBN = {0-521-42058-X}
}

@book {batchelor,
    AUTHOR = {Batchelor, G. K.},
     TITLE = {An introduction to fluid dynamics},
    SERIES = {Cambridge Mathematical Library},
   EDITION = {paperback},
 PUBLISHER = {Cambridge University Press, Cambridge},
      YEAR = {1999},
     PAGES = {xviii+615}
}

@article {baaske,
    AUTHOR = {Baaske, Franka and Schmeisser, Hans-J\"{u}rgen},
     TITLE = {On the {C}auchy problem for hyperdissipative {N}avier-{S}tokes
              equations in super-critical {B}esov and {T}riebel-{L}izorkin
              spaces},
   JOURNAL = {Nonlinear Anal.},
  FJOURNAL = {Nonlinear Analysis. Theory, Methods \& Applications. An
              International Multidisciplinary Journal},
    VOLUME = {226},
      YEAR = {2023},
     PAGES = {Paper No. 113140, 19}
}

@article {tao09,
    AUTHOR = {Tao, Terence},
     TITLE = {Global regularity for a logarithmically supercritical
              hyperdissipative {N}avier-{S}tokes equation},
   JOURNAL = {Anal. PDE},
  FJOURNAL = {Analysis \& PDE},
    VOLUME = {2},
      YEAR = {2009},
    NUMBER = {3},
     PAGES = {361--366}
}

@article {katz,
    AUTHOR = {Katz, N. H. and Pavlovi\'{c}, N.},
     TITLE = {A cheap {C}affarelli-{K}ohn-{N}irenberg inequality for the
              {N}avier-{S}tokes equation with hyper-dissipation},
   JOURNAL = {Geom. Funct. Anal.},
  FJOURNAL = {Geometric and Functional Analysis},
    VOLUME = {12},
      YEAR = {2002},
    NUMBER = {2},
     PAGES = {355--379}
}

@article{BoutrosGibbon,
  title={Phase transitions in the fractional three-dimensional Navier--Stokes equations},
  author={Boutros, Daniel W and Gibbon, John D},
  journal={Nonlinearity},
  volume={37},
  number={4},
  pages={045010},
  year={2024},
  publisher={IOP Publishing}
}

@article {ferreira,
    AUTHOR = {Ferreira, Lucas C. F. and Villamizar-Roa, Elder J.},
     TITLE = {Fractional {N}avier-{S}tokes equations and a {H}\"{o}lder-type
              inequality in a sum of singular spaces},
   JOURNAL = {Nonlinear Anal.},
  FJOURNAL = {Nonlinear Analysis. Theory, Methods \& Applications. An
              International Multidisciplinary Journal},
    VOLUME = {74},
      YEAR = {2011},
    NUMBER = {16},
     PAGES = {5618--5630}
}

@article {colombo,
    AUTHOR = {Colombo, Maria and Haffter, Silja},
     TITLE = {Global regularity for the hyperdissipative {N}avier-{S}tokes
              equation below the critical order},
   JOURNAL = {J. Differential Equations},
  FJOURNAL = {Journal of Differential Equations},
    VOLUME = {275},
      YEAR = {2021},
     PAGES = {815--836}
}

@article{cordoba,
  title={Finite time blow-up for the hypodissipative Navier Stokes equations with a force in $L_t^1C_x^{1,\epsilon}\cap L_t^\infty L^2_x$},
  author={C\'{o}rdoba, Diego and Martínez-Zoroa, Luis and Zheng, Fan},
  journal={arXiv preprint arXiv:2407.06776},
  year={2024}
}

@article {nan,
    AUTHOR = {Nan, Zhijie and Zheng, Xiaoxin},
     TITLE = {Existence and uniqueness of solutions for {N}avier-{S}tokes
              equations with hyper-dissipation in a large space},
   JOURNAL = {J. Differential Equations},
  FJOURNAL = {Journal of Differential Equations},
    VOLUME = {261},
      YEAR = {2016},
    NUMBER = {6},
     PAGES = {3670--3703}
}

@article {sverakpolacik,
    AUTHOR = {Pol\'{a}\v{c}ik, P. and \v{S}ver\'{a}k, V.},
     TITLE = {Zeros of complex caloric functions and singularities of
              complex viscous {B}urgers equation},
   JOURNAL = {J. Reine Angew. Math.},
      VOLUME = {616},
      YEAR = {2008},
     PAGES = {205--217}
}

@article {orum-ossiander,
    AUTHOR = {Orum, Chris and Ossiander, Mina},
     TITLE = {Exponent bounds for a convolution inequality in {E}uclidean
              space with applications to the {N}avier-{S}tokes equations},
   JOURNAL = {Proc. Amer. Math. Soc.},
  FJOURNAL = {Proceedings of the American Mathematical Society},
    VOLUME = {141},
      YEAR = {2013},
    NUMBER = {11},
     PAGES = {3883--3897}
}
\end{document}